\newtheoremstyle{warnsign}
  {\topsep}                
  {\topsep}                
  {}                       
  {}                       
  {\bfseries}              
  {. \rm\textdbend}        
  {5pt plus 1pt minus 1pt} 
  {Warning \thewarning}    
\theoremstyle{plain}
\newtheorem{theorem}{Theorem}[section]
\newtheorem{letterthm}{Theorem}
\newtheorem{proposition}[theorem]{Proposition}
\newtheorem{corollary}[theorem]{Corollary}
\newtheorem{lemma}[theorem]{Lemma}
\newtheorem{conjecture}[theorem]{Conjecture}
\theoremstyle{definition}
\newtheorem{definition}[theorem]{Definition}
\newtheorem{example}[theorem]{Example}
\newtheorem{summary}[theorem]{Summary}
\newtheorem{remark}[theorem]{Remark}
\theoremstyle{warnsign}
\newtheorem{warning}[theorem]{Warning}
\title{Formal Concept Analysis and Homotopical Combinatorics}
\author[S.\ Balchin]{Scott Balchin}
\address{Mathematical Sciences Research Centre, Queen's University Belfast, UK}
\email{s.balchin@qub.ac.uk}
\author[B.\ Spitz]{Ben Spitz}
\address{Department of Mathematics, University of Indiana, Bloomington, IN, USA}
\email{bespitz@iu.edu}
\subjclass[2020]{55P91, 18M60, 06B05, 06-08}
\newcommand{\conlat}{\underline{\mathfrak{B}}}
\newcommand{\Sub}{\mathsf{Sub}}
\newcommand{\Tr}{\mathsf{Tr}}
\newcommand{\coTr}{\mathsf{coTr}}
\newcommand{\Sat}{\mathsf{Sat}}
\newcommand{\coSat}{\mathsf{coSat}}
\newcommand{\Cov}{\mathsf{Cov}}
\newcommand{\qbinom}[3]{%
  \IfNoValueTF{#3}{\binom{#1}{#2}}{%
    \mathchoice
    {\binom{#1}{#2}_{\!\!#3}}
    {\binom{#1}{#2}_{\!#3}}
    {\binom{#1}{#2}_{\!#3}}
    {\binom{#1}{#2}_{\!#3}}%
  }%
}
\DeclarePairedDelimiter\ceil{\lceil}{\rceil}
\DeclarePairedDelimiter\floor{\lfloor}{\rfloor}
\DeclarePairedDelimiter\abs{\lvert}{\rvert}
\definecolor{nice-purple}{HTML}{9B4F96}
\definecolor{nice-pink}{HTML}{D60270}
\begin{document}

\begin{abstract}
    Formal Concept Analysis makes the fundamental observation that any finite lattice $(L, \leq)$ is determined up to isomorphism by the restriction of the relation ${\leq} \subseteq L \times L$ to the set $J(L) \times M(L)$, where $J(L)$ is the set of join-irreducible elements of $L$, and $M(L)$ is the set of meet-irreducible elements of $L$.
    
    For any finite lattice $L$ equipped with the action of a finite group $G$, we explicitly describe this restricted relation for the lattice of transfer systems $\Tr(L)$ in terms of $L$ only. We apply this to give new computations of the number of transfer systems for certain finite groups, and to produce bounds on the number of transfer systems on certain families of abelian finite groups. We also provide computer code to enable other researchers' use of these techniques.
\end{abstract}

\maketitle

\vspace{10mm}
\begin{figure}[h]
    \centering{}
    \includegraphics[scale=0.45, cframe=nice-pink 2pt]{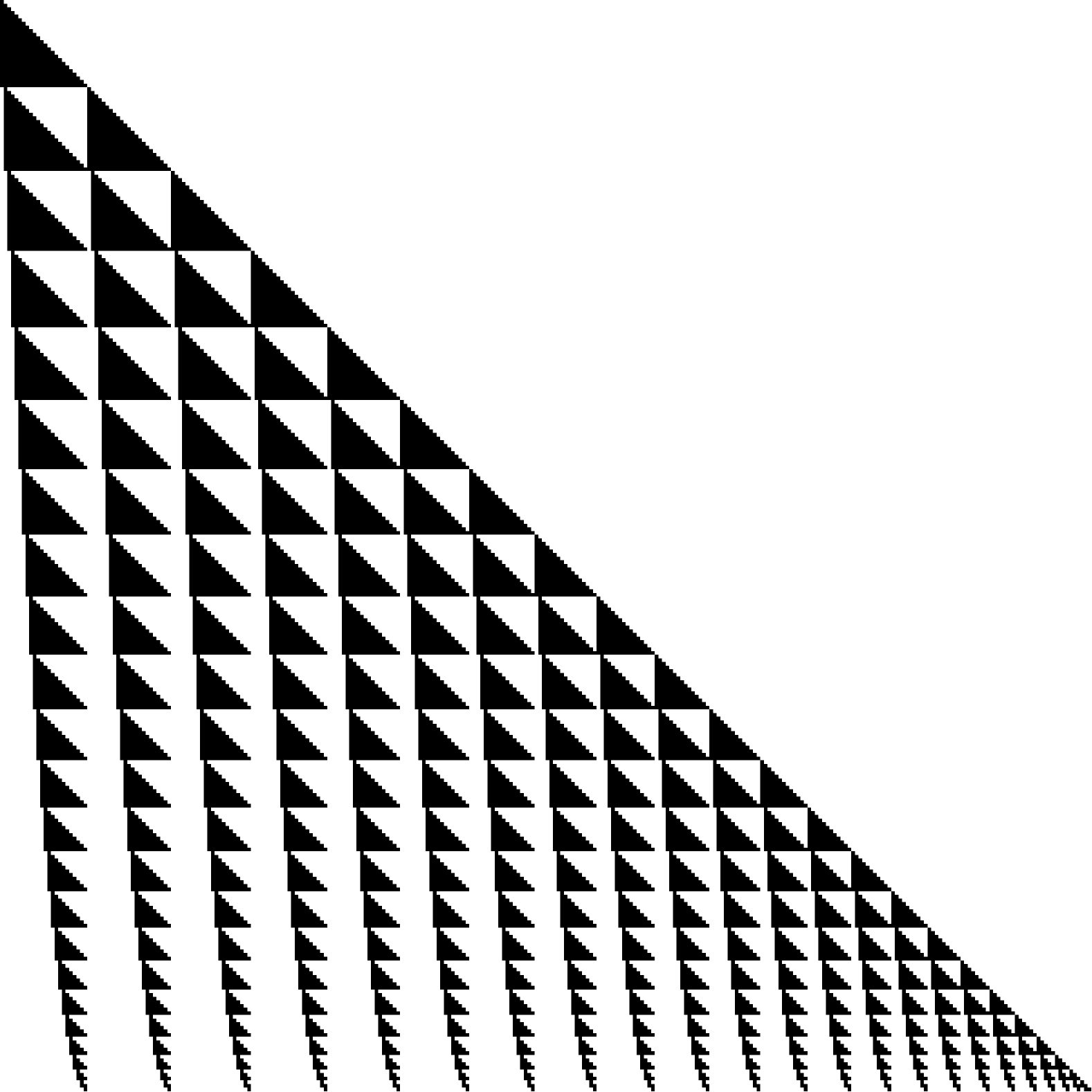}
    \caption*{The reduced formal context representing the Tamari lattice of order 22.}
\end{figure}

\newpage

\tableofcontents

\section*{Introduction}

\subsection*{Context}

The philosophical underpinnings of Formal Concept Analysis (FCA) has its roots in Arnauld and Nicole’s Port-Royal logic: Their seminal 1662 work ``La Logique ou l’art de penser'' explores the duality between extension and intension in the definition of concepts, insofar that a concept is defined both by the set of things it applies to (its extension) and by the set of properties that characterize it (its intension). It is this duality that is mirrored in the formalism of FCA.

Developed in the early 1980s, FCA formalizes this notion of a concept as a pair consisting of an extent (a set of objects) and an intent (a set of attributes) that are mutually maximal with respect to one another \cite{Wille1982Restructuring,Ganter, ganter1998mathematical, ganter2005fca}. The collection of all such concepts forms a complete lattice, known as the \emph{concept lattice}. The aforementioned duality is concretely realized as a Galois connection between sets of objects and sets of attributes, and draws  upon earlier theoretical work of Birkhoff \cite{birkhoff1973lattice}.

FCA has seen use in real-world data analysis and information sciences. We refer the reader to \cite{Poelmans2013FCA, Poelmans2013Applications} for a comprehensive literature review on where FCA has been applied; including applications in software mining, web analytics, medicine, biology, and chemistry. In this paper, however, we reverse the flow of knowledge and present a novel application of FCA to an emerging field of pure mathematics commonly referred to as \emph{homotopical combinatorics} which inhabits a rich intersection of equivariant homotopy theory and combinatorics (see \cite{blumberg2024homotopical} for an overview). 

Explicitly, fix a finite group $G$ and a $G$-space $X$ which comes equipped with a multiplicative structure. One then considers the multitude of ways in which the multiplication interacts with the $G$-action in a homotopical setting. Work of Blumberg--Hill informs us that these possibilities are controlled by (homotopy classes of) \emph{$N_\infty$ operads} for $G$ \cite{blumberghill}.  Further work of many authors proves that these homotopy classes are realized by a discrete structure called a \emph{transfer system} \cite{bbr, BP_operads, GW_operads, Rubin_comb}. Critical to the thrust of this paper is the fact that the collection of all transfer systems for $G$ naturally forms a finite lattice, which we henceforth denote $\mathsf{Tr}(\Sub(G))$.

In practice, $\mathsf{Tr}(\Sub(G))$ is only completely understood as a lattice for cyclic groups of prime power order, where the first author along with Barnes and Roitzheim prove that $\mathsf{Tr}(\Sub(C_{p^n}))$ is the $(n+1)$-st Tamari lattice \cite{bbr}. In the literature there are moreover enumeration results for abelian $p$-groups of rank 2, groups of the form $C_{p^nq}$ and $D_{p^n}$, and a handful of miscellaneous groups \cite{bao2023transfersystemsrankelementary, BMO_enumeration}.

\subsection*{Main results}

The main result of this paper is a novel contextualization of the computation of $\mathsf{Tr}(\Sub(G))$ in the realm of FCA. We appeal to the observation that any finite lattice $(L, \leq)$ is determined up to isomorphism by the restriction of the binary relation ${\leq} \subseteq L \times L$ to the subset $J(L) \times M(L)$ (see \cref{thm:FTFCA2}), where $J(L)$ and $M(L)$ are the subsets of meet and join-irreducible elements of $L$. The importance of this result is that one can, surprisingly, very easily determine $J(\mathsf{Tr}(\Sub(G)))$, $M(\mathsf{Tr}(\Sub(G)))$, and the restricted relation between them, using only the structure of the subgroup lattice of $G$:

\begin{letterthm}[\cref{sec:joinmeet}]
    Let $L$ be a finite lattice with $G$-action (e.g., $L = \Sub(G)$ for a finite group $G$). Then the reduced formal context of $\Tr(L)$ has attributes and objects both in bijective correspondence with the $G$-orbits of the set of nontrivial relations in $L$. The full data of the reduced formal context is described explicitly in \Cref{thm:relation}.
\end{letterthm}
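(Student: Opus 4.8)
The plan is to present $\Tr(L)$ as the lattice of closed sets of a concrete closure operator on the nontrivial relations of $L$ and to extract its irreducible elements by hand. Write $E_L=\{(a,b)\in L\times L: a<b\}$; then $G$ acts on $E_L$, a transfer system on $L$ is determined by the subset of $E_L$ it contains, and a subset of $E_L$ arises this way exactly when it is closed under the operator $\mathsf{cl}$ that freely adjoins $G$-conjugates, transitive composites, and restrictions (from $(a,b)$ and $c\le b$ adjoin $(a\wedge c,\,c)$). Thus $\Tr(L)$ is a closure system on $E_L$, with meet given by intersection but join given by $\mathsf{cl}$ of the union. By the Fundamental Theorem of FCA (\cref{thm:FTFCA2}) it then suffices to (i) identify the join-irreducible transfer systems, (ii) identify the meet-irreducible ones, and (iii) compute the order between the two families.

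For (i): for $(a,b)\in E_L$ let $\langle a\to b\rangle:=\mathsf{cl}(G\cdot(a,b))$ be the principal transfer system it generates, which depends only on the orbit $[a\to b]$. Since every closed set equals $\mathsf{cl}$ of the union of its singletons, every join-irreducible transfer system is some $\langle a\to b\rangle$; I would then show that $[a\to b]\mapsto\langle a\to b\rangle$ is injective on orbits and that each $\langle a\to b\rangle$ really is join-irreducible. Both reduce to a single \emph{non-resurrection} statement: $(a,b)\notin\mathsf{cl}\!\big(\langle a\to b\rangle\setminus G\cdot(a,b)\big)$, i.e.\ deleting the orbit of $(a,b)$ from the transfer system it generates and re-closing does not bring $(a,b)$ back. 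I would prove this by fixing a hypothetical $\mathsf{cl}$-derivation of minimal length of some member of $G\cdot(a,b)$ from $\langle a\to b\rangle\setminus G\cdot(a,b)$ and inspecting its last step. The key structural input is that in a finite poset the $G$-conjugates of a fixed element are pairwise incomparable, which forces every relation of $\langle a\to b\rangle$ with top $\geq b$ to have top exactly $b$: a conjugation step then yields a shorter derivation of another orbit member; a restriction step is forced to be the trivial one; and a transitivity step is chased up the resulting ``top-$b$ tower'' until it bottoms out at an orbit member derived in strictly fewer steps. This non-resurrection lemma is the subtle point on the join side.

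For (ii) I would run the dual argument using the cotransfer machinery (\coTr, \coSat) set up earlier: to each orbit $[a\to b]$ attach the coprincipal transfer system $\coTr_{a\to b}$ (its explicit description being part of \cref{thm:relation}), and show that these are pairwise distinct, that they exhaust $M(\Tr(L))$, and that each admits a unique minimal closed proper superset. This is the step I expect to be the main obstacle: because joins in $\Tr(L)$ are computed by the closure operator rather than by union, ``meet-irreducible'' is the genuinely delicate condition of having a unique upper cover, and the set of transfer systems omitting a given relation is \emph{not} closed under joins, so there is no naive ``largest transfer system omitting $(a,b)$'' to point to. Verifying the unique-upper-cover property uniformly over all orbits, and that $[a\to b]\mapsto\coTr_{a\to b}$ lands bijectively on the meet-irreducibles, is where the real work lies; I expect the argument to again hinge on the incomparability of $G$-conjugates, applied now to the bottoms of relations rather than their tops.

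Finally, for (iii): since $\langle a\to b\rangle$ is by construction the smallest transfer system containing $(a,b)$, one has $\langle a\to b\rangle\subseteq\coTr_{c\to d}$ if and only if $(a,b)\in\coTr_{c\to d}$, and unpacking the definition of $\coTr_{c\to d}$ rewrites this condition purely in terms of $\le_L$, $\wedge$, $\vee$ and the $G$-action on $L$, with no reference to $\Tr(L)$ — this is the explicit incidence recorded in \cref{thm:relation}. The triple $\big(J(\Tr(L)),\,M(\Tr(L)),\,\subseteq\big)$ thus has both objects and attributes indexed by $E_L/G$ with this incidence, and feeding it through \cref{thm:FTFCA2} reconstructs $\Tr(L)$, completing the proof.
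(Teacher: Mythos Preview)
Your plan for (i) and (iii) is essentially the paper's: the ``non-resurrection'' lemma is exactly what \cref{lem:genconj} and \cref{lem:principal implies join irr} establish (the paper argues via Rubin's explicit description of $\floor{a\to b}$ rather than an abstract minimal-derivation argument, but the content is the same), and your reduction of the incidence relation to membership of $(a,b)$ in the coprincipal transfer system is precisely how \cref{thm:relation} is proved.

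The real gap is in (ii). You correctly observe that the set of transfer systems omitting a given relation is not closed under joins, and you anticipate that verifying the unique-upper-cover property directly would be the hard part. The paper sidesteps this entirely: the map $(-)^{\boxslash}\colon \coTr(L)\to \Tr(L)^{\mathrm{op}}$ is an \emph{isomorphism} of posets (this is \cref{lem:cotransfer_duality}, essentially due to Luo--Rognerud for trivial $G$-action and then passed to $G$-fixed points). Hence
\[
M(\Tr(L)) \;=\; J(\Tr(L)^{\mathrm{op}}) \;\cong\; J(\coTr(L)) \;=\; J(\Tr(L^{\mathrm{op}})),
\]
and the last set is already known from (i) applied to $L^{\mathrm{op}}$. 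So there is no separate ``unique upper cover'' verification to do, and the meet-irreducibles are $\ceil{a\to b}^{\boxslash}$ with the bijection to $E_L/G$ coming for free from the join-irreducible side. Your outline invokes the cotransfer machinery but does not use this anti-isomorphism, and without it your proposed direct attack on (ii) has no clear path to completion.
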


With this structural result in place, we are then able to appeal to the wealth of machinery that FCA has to offer. The first of which are the computational tools which can be used to compute the number of concepts. We provide details of these computations in \Cref{app:A}, but for now we advertise the following calculation, which far out-scales any computation seen in the homotopical combinatorics literature to date, and can be obtained in approximately 2 hours of computational time on a personal laptop:

\begin{letterthm}[\cref{sec:joinmeet}]
    There are exactly $37,799,146,070$ transfer systems for $A_6$.
\end{letterthm}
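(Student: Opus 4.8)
The statement is a computation, and the strategy is to reduce it, by way of Formal Concept Analysis, to counting the formal concepts of a small context that can be written down by hand. By the Fundamental Theorem recalled in \cref{thm:FTFCA2}, the lattice $\Tr(\Sub(A_6))$ is isomorphic to the concept lattice of its reduced formal context, so $\abs{\Tr(\Sub(A_6))}$ is exactly the number of formal concepts of that context. By \cref{thm:relation} (and Theorem~A), this reduced context depends only on the poset $\Sub(A_6)$ together with its conjugation action: its objects and its attributes are both indexed by the $G$-orbits of nontrivial relations $K < H$ in $\Sub(A_6)$, and the incidence between two such orbits is given by the explicit combinatorial rule of \cref{thm:relation}. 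Thus the proof divides into three tasks: (1)~produce the subgroup poset of $A_6$ together with its conjugation action; (2)~assemble the reduced formal context from this data; (3)~count its concepts.

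For (1), one uses a computer algebra system (e.g.\ GAP or Magma) to list the conjugacy classes of subgroups of $A_6$, pick representatives, tabulate every containment between conjugates, and record the action of $A_6$ on the resulting set of nontrivial relations. Since $\abs{A_6} = 360$ and $A_6$ has only about two dozen conjugacy classes of subgroups, this step is routine and yields a set $\mathcal{R}$ of $G$-orbits of nontrivial relations of modest size. For (2), one applies \cref{thm:relation} verbatim: take $\mathcal{R}$ as both the object set and the attribute set and fill in the incidence using the stated rule, checking on representatives that the rule is well-defined on orbits (as the theorem guarantees). The result is a binary matrix of size roughly $\abs{\mathcal{R}} \times \abs{\mathcal{R}}$.

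For (3), one enumerates the formal concepts of this context with a depth-first procedure of Close-by-One / In-Close type, which walks the concept lattice without ever storing it and merely increments a counter; the implementation, and the runtime of roughly two hours on a laptop, are described in \Cref{app:A}. The output is $37{,}799{,}146{,}070$, which is the claimed count. The main obstacle is precisely this last step: the number of concepts is on the order of $3.8 \times 10^{10}$, so any method that materialises the lattice — or that naively lists transfer systems one at a time — is infeasible, and one needs a lean, well-parallelised enumeration together with a convincing correctness check. For the latter I would run the same pipeline on groups whose transfer-system counts are already recorded in the literature (cyclic $p$-groups, rank-$2$ abelian $p$-groups, groups of the form $C_{p^n q}$ and $D_{p^n}$, and small groups such as $A_5$), and cross-validate the $A_6$ figure against an independent implementation of the concept-counting step.
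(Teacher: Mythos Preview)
Your proposal is correct and matches the paper's approach essentially step for step: the paper builds the reduced formal context of $\Tr(\Sub(A_6))$ from the subgroup lattice with its conjugation action via \cref{thm:relation} (obtaining a $109 \times 109$ binary matrix), and then counts its concepts using PCbO, a parallel Close-by-One algorithm, as described in \Cref{app:A}. Your suggested cross-validation against groups with known transfer-system counts is likewise what the paper's runtime table implicitly records.
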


Prior to this result, the largest calculation announced was by the first author and collaborators in \cite{basis_paper, balchin2025ninftysoftwarepackagehomotopical} where $\abs{\mathsf{Tr}(\Sub(S_5))} = 183,598,202$ was computed. We note here, and will return to in \Cref{app:A}, that this calculation took the use of high-performance computing facilities and a week of computation, whereas the methods derived in this paper allow the same computation to be achieved in under a minute.

Next, we move to the question of using FCA to form bounds for $\abs{\mathsf{Tr}(\Sub(G))}$ for several key families of groups. The key invariant we will use is the (co)density of a formal context. Briefly, any formal context can be represented as a binary matrix; the density $\delta$ is the proportion of 1's in the matrix, while the codensity $\rho$ is the proportion of 0's. There exist results providing non-trivial bounds for the number of concepts of a formal context given the density, whence the relevance of these calculations (see \cref{sec:complex} for details). Our first collection of results regards the codensity of the reduced formal context arising from the transfer lattice of cyclic groups. In what follows we identify $\Sub(C_N)$ with $[n_1] \times \cdots \times [n_k]$ where the $n_i$ are the exponents appearing in the prime factorization of $N$.

\begin{letterthm}[\cref{sec:tamari} and \cref{sec:cyclic}]
    For any natural numbers $n_1, \dots, n_k$,
    \[\rho(\Tr([n_1] \times \dots \times [n_k])) = \frac{\sum_{0 \leq x_1 \leq y_1 \leq z_1 \leq n_1} \dots \sum_{0 \leq x_k \leq y_k \leq z_k \leq n_k} \left(\prod_{i=1}^k (z_i - x_i +1) - \prod_{i=1}^k (z_i - y_i +1)\right)}{\left(\sum_{0 \leq x_1 \leq n_1} \dots \sum_{0 \leq x_k \leq n_k} \left(\prod_{i=1}^k (n_i - x_i + 1) - 1 \right)\right)^2}.\]
    In particular we are able to conclude that
    \begin{align*}\rho(\Tr([n])) &= \frac{(n+2)(n+3)}{6n(n+1)}\\[5pt]
    \rho(\Tr([n] \times [m])) &= \frac{(m+2) (m+3) (n+2) (n+3) (3 m n+4 m+4 n)}{36 (m+1) (n+1) (2m + 2n + m n)^2}\\[5pt]
    \rho(\Tr([1]^k)) &= \frac{6^k - 5^k}{(3^k-2^k)^2}.
    \end{align*}
\end{letterthm}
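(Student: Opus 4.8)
Throughout write $L = [n_1] \times \cdots \times [n_k]$ equipped with the trivial group action, and let $R$ denote the set of nontrivial relations of $L$, that is, the pairs $(a,b) \in L \times L$ with $a < b$; by \cref{thm:relation} the reduced formal context of $\Tr(L)$ then has both its object set and its attribute set equal to $R$. The plan is to compute $\rho(\Tr(L))$ as $N/\abs{R}^2$, where $N$ is the number of zero entries of this context (the non-incident pairs). For the denominator, note that for $a \in L$ the up-set $\{b \in L : b \geq a\}$ has $\prod_{i=1}^{k}(n_i - a_i + 1)$ elements, so there are $\prod_{i}(n_i - a_i + 1) - 1$ elements $b$ with $b > a$; summing over $a$ gives
\[
\abs{R} \;=\; \sum_{0 \leq a_1 \leq n_1} \cdots \sum_{0 \leq a_k \leq n_k} \Bigl( \prod_{i=1}^{k}(n_i - a_i + 1) - 1 \Bigr),
\]
whose square is the asserted denominator.

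For the numerator I would first read off the incidence relation from \cref{thm:relation}: specialized to the trivial action on a product of chains it says that $(a,b), (c,d) \in R$ are \emph{non}-incident precisely when $c \leq a$, $d \leq b$, and $d \not\leq a$. The key step is then the change of variables $(x,y,z,w) := (c,d,b,a)$. Since $(c,d),(a,b) \in R$ we have $c \leq d$ and $a \leq b$, so these three non-incidence conditions translate into $x \leq y \leq z$, $x \leq w \leq z$, and $y \not\leq w$; conversely the latter three force $x < y$ (otherwise $y = x \leq w$, contradicting $y \not\leq w$) and $w < z$ (otherwise $y \leq z = w$), so that $(c,d) = (x,y)$ and $(a,b) = (w,z)$ automatically lie in $R$. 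Hence $N$ equals the number of $4$-tuples $(x,y,z,w) \in L^4$ with $x \leq y \leq z$, $x \leq w \leq z$ and $y \not\leq w$. Fixing $(x,y,z)$ with $x \leq y \leq z$: the $w$ with $y \leq w \leq z$ form a subset (of size $\prod_i(z_i - y_i + 1)$) of the $w$ with $x \leq w \leq z$ (of size $\prod_i(z_i - x_i + 1)$), so the number of $w$ in the latter set with $y \not\leq w$ is $\prod_i(z_i - x_i + 1) - \prod_i(z_i - y_i + 1)$; summing over $0 \leq x_i \leq y_i \leq z_i \leq n_i$ for each $i$ gives exactly the asserted numerator, which establishes the general formula.

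The three displayed special cases are then evaluations. For $k = 1$ the $4$-tuples above are exactly the chains $0 \leq x \leq w < y \leq z \leq n$, of which there are $\binom{n+3}{4}$, while $\abs{R} = \binom{n+1}{2}$; dividing and simplifying gives $\tfrac{(n+2)(n+3)}{6n(n+1)}$. For $L = [1]^k$ both counts are multiplicative over the $k$ coordinates: each coordinate contributes $6$ choices to $\#\{(x,y,z,w) : x \leq y \leq z,\ x \leq w \leq z\}$ and $5$ to the subcount where in addition $y \leq w$, so $N = 6^k - 5^k$, and $\abs{R} = \sum_{S \subseteq \{1,\dots,k\}}(2^{\abs{S}} - 1) = 3^k - 2^k$, giving $\tfrac{6^k - 5^k}{(3^k - 2^k)^2}$. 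For $k = 2$ the numerator splits as the difference of $\bigl(\sum_{0 \leq x \leq z \leq n}(z - x + 1)^2\bigr)\bigl(\sum_{0 \leq x \leq z \leq m}(z - x + 1)^2\bigr)$ and $\binom{n+4}{4}\binom{m+4}{4}$, a difference of products of one-variable sums; using $\sum_{0 \leq x \leq z \leq n}(z - x + 1)^2 = \tfrac{(n+1)(n+2)^2(n+3)}{12}$ and the value $\abs{R} = \tfrac14(n+1)(m+1)(mn + 2m + 2n)$ coming from the denominator formula, a direct (if slightly lengthy) computation puts $N/\abs{R}^2$ into the stated form.

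Once \cref{thm:relation} is in hand the argument is short: the one genuine idea is the substitution $(x,y,z,w) = (c,d,b,a)$, which converts the incidence relation into an interval-counting problem on $L$. Accordingly the main obstacles are bookkeeping — correctly reading off the explicit incidence of \cref{thm:relation} in this case, and carrying the $k = 2$ polynomial identity through to matching normal form.
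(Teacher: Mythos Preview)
Your proof is correct and follows essentially the same route as the paper: both read off from \cref{thm:relation} that non-incident pairs correspond to $4$-tuples $(K,H,B,A)$ (your $(x,y,z,w)$) with $K\le H\le B$, $K\le A\le B$, $A\not\ge H$, then count the admissible $A$ for fixed $(K,H,B)$ to obtain the general numerator, and derive the $k=1$ and $[1]^k$ cases by the same elementary counts. The only real difference is the $k=2$ evaluation, where the paper grinds out the six-fold sum by iterated summation while your factorisation of the numerator as $\bigl(\sum_{0\le x\le z\le n}(z-x+1)^2\bigr)\bigl(\sum_{0\le x\le z\le m}(z-x+1)^2\bigr)-\binom{n+4}{4}\binom{m+4}{4}$ is tidier; this is a computational nicety rather than a different argument.
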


Next, we compute the codensity for all elementary abelian $p$-groups:

\begin{letterthm}[\cref{sec:elementary}]
    For any prime number $p$ and any positive integer $n$,
    \[
    \rho(\Tr(\Sub(C_p^n)))  = \dfrac{\sum_{i=0}^{n-1} \sum_{j=1}^{n-i} \sum_{k=0}^{n-i-j} \qbinom{n}{i}{p} \qbinom{n-i}{j}{p} \qbinom{n-i-j}{k}{p} (a_{j+k} - a_k)}{\left( \sum_{i=0}^{n-1} \qbinom{n}{i}{p} (a_{n-i}-1) \right)^2}
    \]
    where $\binom{n}{d}_p$ denotes a Gaussian binomial coefficient (giving the number of $d$-dimensional subspaces of $\mathbb{F}_p^n$) and
    \vspace{-2mm}
    \[a_d = \sum_{i=0}^d \qbinom{d}{i}{p}.\]
    \vspace{-2mm}
    Consequently,
    \vspace{-3mm}
    \[\lim_{p \to \infty}\rho(\Tr(\Sub(C_p^n))) = \begin{cases} 1 &: n = 1 \\ 1/4 &: n = 2 \\ 0 &: n > 2 \end{cases} \qquad \qquad \text{and}\qquad \qquad \lim_{n \to \infty}\rho(\Tr(\Sub(C_p^n))) = 0.\]
\end{letterthm}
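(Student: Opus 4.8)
The plan is to feed $L = \Sub(C_p^n)$ into the general description of the reduced formal context and then run two asymptotic analyses. Since $C_p^n$ is abelian, its conjugation action on $\Sub(C_p^n)$ is trivial, so by \cref{thm:relation} the reduced formal context of $\Tr(\Sub(C_p^n))$ has objects and attributes both indexed by the nontrivial relations $A \subsetneq B$ in the lattice of $\mathbb{F}_p$-subspaces of $\mathbb{F}_p^n$, with the pair $\big((A_1 \to B_1),(A_2 \to B_2)\big)$ a non-incident ($0$-)entry precisely when $A_2 \subseteq A_1$, $B_2 \subseteq B_1$, and $B_2 \not\subseteq A_1$. Since $\rho$ is the proportion of $0$-entries, $\rho = \#\{0\text{-entries}\}/N^2$ where $N = \#\{A \subsetneq B\}$, and the formula reduces to two enumerations.

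For the denominator, sorting the relations by $\dim A = d < e = \dim B$ gives $N = \sum_{0 \le d < e \le n}\qbinom{n}{e}{p}\qbinom{e}{d}{p}$; carrying out the $d$-sum via $\sum_{d=0}^{e-1}\qbinom{e}{d}{p} = a_e - 1$ and reindexing $i = n - e$ gives $N = \sum_{i=0}^{n-1}\qbinom{n}{i}{p}(a_{n-i}-1)$, as claimed. For the numerator, reparametrise a $0$-entry by $U_1 = A_2 \subsetneq U_2 = B_2$, $U_3 = B_1$, $V = A_1$; the defining inequalities become exactly $U_1 \subsetneq U_2 \subseteq U_3$, $U_1 \subseteq V \subseteq U_3$, and $U_2 \not\subseteq V$. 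Grouping by the flag $U_1 \subsetneq U_2 \subseteq U_3$ of type $(\dim U_1, \dim U_2, \dim U_3) = (i, i+j, i+j+k)$ with $i \ge 0$, $j \ge 1$, $k \ge 0$, $i+j+k \le n$: the number of such flags is $\qbinom{n}{i}{p}\qbinom{n-i}{j}{p}\qbinom{n-i-j}{k}{p}$ by transitivity of $GL_n(\mathbb{F}_p)$ on flags, and for a fixed flag the admissible $V$ are those in $[U_1, U_3]$ but not in $[U_2, U_3]$, numbering $a_{j+k} - a_k$ since $[U_1, U_3] \cong \Sub(C_p^{j+k})$ and $[U_2, U_3] \cong \Sub(C_p^{k})$. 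Summing over $i, j, k$ produces the stated numerator.

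For $\lim_{p \to \infty}$, I compare degrees in $p$, using that $\qbinom{n}{d}{p}$ is monic of degree $d(n-d)$, that $a_d$ has degree $\lfloor d^2/4 \rfloor$ (leading coefficient $1$ for $d$ even, $2$ for $d$ odd), and that consequently $a_{j+k} - a_k$ still has degree $\lfloor(j+k)^2/4\rfloor$ whenever $j \ge 1$. Hence $\deg_p(\text{numerator})$ is the maximum, over $0 \le u < v \le w \le n$, of $u(n-u)+(v-u)(n-v)+(w-v)(n-w)+\lfloor(w-u)^2/4\rfloor$, while $\tfrac12\deg_p(\text{denominator}) = \max_{1 \le e \le n}\big(e(n-e)+\lfloor e^2/4\rfloor\big)$. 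A routine optimisation gives, for the first, $\lfloor n^2/2\rfloor$ (attained near $(u,v,w) = (0, n/2, n)$) and, for the second, a value $\ge n^2/3 - O(1)$ (attained near $e = 2n/3$). Since $\lfloor n^2/2 \rfloor < 2\big(n^2/3 - O(1)\big)$ holds for all $n \ge 3$ (clear for $n \ge 5$, and for $n = 3, 4$ the two sides are $4 < 6$ and $8 < 10$), the numerator has strictly smaller $p$-degree, so $\rho \to 0$ for $n \ge 3$. For $n = 1$ the context is the $1 \times 1$ matrix $[0]$, so $\rho \equiv 1$; for $n = 2$ the numerator equals $p^2 + 5p + 5 \sim p^2$ while $N^2 = (2p+3)^2 \sim 4p^2$, so $\rho \to 1/4$.

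For $\lim_{n \to \infty}$ (with $p$ fixed), I bound the numerator by dropping the condition $U_2 \not\subseteq V$: $\#\{0\text{-entries}\} \le \sum_{0 \le d < e \le n}\qbinom{n}{e}{p}\qbinom{e}{d}{p}\,a_d\,a_e$. With $p^{m(k-m)} \le \qbinom{k}{m}{p} \le C(p)\,p^{m(k-m)}$ and $p^{\lfloor k^2/4\rfloor} \le a_k \le (k+1)C(p)\,p^{\lfloor k^2/4\rfloor}$, where $C(p) = \prod_{i \ge 1}(1 - p^{-i})^{-1} < \infty$, the right-hand side is at most $\operatorname{poly}(n)\cdot p^{\,7n^2/12}$ (the exponent $e(n-e)+d(e-d)+\lfloor d^2/4\rfloor+\lfloor e^2/4\rfloor$ being maximised on the boundary $e = n$ at $d \approx 2n/3$), whereas just the largest term of $N$ forces $N^2 \ge \operatorname{const}\cdot p^{\,8n^2/12 - O(1)}$; as $7/12 < 8/12$ the quotient tends to $0$. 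The main obstacle throughout is this asymptotic bookkeeping: pinning down the dominant terms of the triple sum, controlling the floor functions and the subleading $O(n)$ corrections so the degree comparisons are rigorous, and (for $n = 1, 2$) verifying that the top-degree part of the numerator does not cancel.
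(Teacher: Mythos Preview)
Your proposal is correct and follows essentially the same route as the paper: the enumeration via the flag $U_1 \subsetneq U_2 \subseteq U_3$ together with the choice of $V \in [U_1,U_3] \setminus [U_2,U_3]$ is a relabelling of the paper's four-step choice of $K, H, B, A$, and your degree comparison for the two limits is precisely the content of the paper's subsequent proposition that the numerator and denominator have $p$-degrees $\lfloor n^2/2\rfloor$ and $2\lfloor n^2/3\rfloor$ (the paper states the limits as a corollary and defers the degree analysis, whereas you sketch it inline). Your explicit $n=2$ computation (numerator $p^2+5p+5$, $N = 2p+3$) is correct and in fact sharper than the displayed value in the paper's closing Remark, which contains an arithmetic slip.
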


\subsection*{Outline of the document}

This paper has been written with the homotopical combinatorial audience in mind. Hence, in \Cref{sec:primer} we spend some time introducing the required results from FCA. In \Cref{sec:joinmeet} we discuss a characterization of the join and meet-irreducible elements of $\Tr(\Sub(G))$ as well as the restricted relation between them required to obtain the promised formal context. Next, in \Cref{sec:complex} we relate the notion of \emph{complexity} as introduced in \cite{basis_paper} to the notion of contranomial scales in FCA, which thus proves that complexity is a computationally complex invariant. \Cref{sec:complex} also introduces upper bounds for the size of the concept lattice in terms of the density.

\cref{sec:tamari} explores the formal context for $C_p^n$, which by the results of \cite{bbr} is equivalently the formal contexts for the family of Tamari lattices. This provides us with a warm-up for the results of \cref{sec:cyclic} which computes the codensity of the formal context for all cyclic groups. In \cref{sec:elementary} we generalize in the orthogonal direction and instead compute the codensity of the formal context for elementary abelian $p$-groups.

Finally, in \cref{sec:sat} we explore the theory of FCA for the \emph{saturated} and \emph{cosaturated} transfer systems which have proved to be vital in many applications of homotopical combinatorics to equivariant algebra.

\subsection*{Notation}

\begin{itemize}
    \item For a finite lattice $(L, \leq)$, we write
    \begin{itemize}
        \item $J(L)$ for set of join-irreducibles in $L$.
        \item $M(L)$ for set of meet-irreducibles in $L$.
        \item $\delta(L)$ for density of the $(J(L),M(L),\leq)$ formal context for $L$.
        \item $\rho(L)$ for codensity of the $(J(L),M(L),\leq)$ formal context for $L$.
        \item $\operatorname{Rel}^*(L)$ for the collection of non-trivial relations in $L$.
    \end{itemize}
    \item $\qbinom{n}{k}{q}$ for $q$-binomial coefficients (sometimes referred to as \emph{Gaussian coefficients}).
    \item $[n]$ for the totally ordered lattice $\{0 < \dots < n\}$.
    \item $\Sub(G)$ for the subgroup lattice of a finite group $G$ equipped with the conjugation action.
    \item $\Tr(L)$ for lattice of transfer systems on a lattice $L$ with $G$-action (e.g., $\Sub(G)$).
    \item $A \to B$ as a synonym for either an ordered pair  $(A,B)$ which may or may not appear in a relation or a morphism from $A$ to $B$ in a preorder, as clear from context.
    \item $\floor{A \to B}$ for the transfer system generated by $A \to B$.
    \item $\ceil{A \to B}$ for the cotransfer system generated by $A \to B$.
    \item $X/\!/G$ for the set of orbits of a $G$-set $X$.
\end{itemize}

\subsection*{Acknowledgments}

The authors thank Bernhard Ganter for helping us navigate the FCA literature, and Uta Priss for maintaining \url{https://upriss.github.io/fca/fca.html}. The authors are also grateful to Sridhar Ramesh for correcting an error in the statement of \Cref{thm:FTFCA2}.

The authors would also like to thank the Isaac Newton Institute for Mathematical Sciences, Cambridge, for support and hospitality during the programme ``Equivariant homotopy theory in context" where work on this paper was undertaken. This work was supported by EPSRC grant no EP/Z000580/1. The second author was partially supported by NSF grant DMS-2414922.

\section{Primer on Formal Concept Analysis}\label{sec:primer}

Formal concept analysis (FCA) is a method of data analysis which is designed to describe relationships between a set of objects and a set of attributes which arise in real-world situations. We begin by introducing the relevant definitions.

\begin{definition}
    A \emph{formal context} is an ordered triple $(X, Y, R)$ where $X$ and $Y$ are non-empty sets and $R \subseteq X \times Y$ is a relation. The elements $x \in X$ are called \emph{objects} and the elements $y \in Y$ are called \emph{attributes}.
\end{definition}

\begin{definition}
    Let $(X, Y, R)$ be a formal context. Define two operators $(-)^\uparrow \colon \mathcal{P}(X) \to \mathcal{P}(Y)$ and $(-)^\downarrow \colon \mathcal{P}(Y) \to \mathcal{P}(X)$ as
    \begin{align*}
        A^\uparrow &= \{y \in Y : \text{for each } x \in A,\, (x,y) \in R\},\\
        B^\downarrow &= \{x \in Y : \text{for each } y \in B,\, (x,y) \in R\}.
    \end{align*}
\end{definition}

\begin{definition}
    Let $(X, Y, R)$ be a formal context. A \emph{formal concept} in $(X,Y,R)$ is a pair $A \subseteq X$, $B \subseteq Y$ such that $A^\uparrow = B$ and $B^\downarrow = A$.

    Denote by $\conlat(X, Y, R)$ the collection of all formal concepts of $(X, Y, R)$. It is naturally partially ordered by $(A_1,B_1) \leq (A_2,B_2) \iff A_1 \subseteq A_2$ (equivalently if and only if $B_1 \supseteq B_2$).
\end{definition}

A key result of FCA is that the poset $\conlat(X, Y, R)$ is a complete lattice, and hence goes by the name of the \emph{concept lattice}:

\begin{theorem}[Fundamental theorem of FCA, Part I \cite{Wille1982Restructuring}]\label{thm:FTFCA1}
    Let $(X, Y, R)$ be a formal context. Then $\conlat(X, Y, R)$ is a complete lattice with meet and join given by
    \begin{align*}
        \bigwedge_{j \in J} (A_j,B_j) &= (\bigcap_{j \in J} A_j, (\bigcup_{j \in J}B_j)^{\downarrow \uparrow}) \\
        \bigvee_{j \in J} (A_j,B_j) &= ((\bigcup_{j \in J}A_j)^{\uparrow \downarrow}, \bigcap_{j \in J} B_j)
    \end{align*}
\end{theorem}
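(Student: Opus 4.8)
The plan is to realize $\conlat(X,Y,R)$ as the lattice of closed sets of a closure operator, which is automatically complete. First I would check that the pair $\big((-)^\uparrow,(-)^\downarrow\big)$ is an antitone Galois connection between $\mathcal{P}(X)$ and $\mathcal{P}(Y)$: for $A \subseteq X$ and $B \subseteq Y$, the three conditions $B \subseteq A^\uparrow$, $A \subseteq B^\downarrow$, and $(x,y) \in R$ for all $x \in A,\ y \in B$ are all equivalent, essentially by unwinding the definitions of $(-)^\uparrow$ and $(-)^\downarrow$. From this adjunction one reads off, with no further reference to $R$, the standard consequences: both operators are inclusion-reversing; $A \subseteq A^{\uparrow\downarrow}$ and $B \subseteq B^{\downarrow\uparrow}$; and $A^\uparrow = A^{\uparrow\downarrow\uparrow}$, $B^\downarrow = B^{\downarrow\uparrow\downarrow}$. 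Hence $(-)^{\uparrow\downarrow}$ is a closure operator on $\mathcal{P}(X)$ — monotone, extensive, idempotent — and dually $(-)^{\downarrow\uparrow}$ on $\mathcal{P}(Y)$.

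Next I would identify concepts with closed sets. A subset $A \subseteq X$ is the first coordinate of some concept exactly when $A = A^{\uparrow\downarrow}$ (an \emph{extent}), and then the unique such concept is $(A, A^\uparrow)$; dually for \emph{intents} $B = B^{\downarrow\uparrow}$. Since the order on $\conlat(X,Y,R)$ is inclusion of first coordinates by definition, $A \mapsto (A, A^\uparrow)$ is an order isomorphism from the poset of extents (ordered by inclusion) onto $\conlat(X,Y,R)$, with inverse $(A,B)\mapsto A$. So it suffices to show the extents form a complete lattice with the claimed operations.

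The core step is that an arbitrary intersection of extents is an extent. Given concepts $(A_j,B_j)_{j\in J}$, set $A = \bigcap_j A_j$. As $A \subseteq A_k$ and $(-)^\uparrow$ is antitone, $B_k = A_k^\uparrow \subseteq A^\uparrow$ for all $k$, so $\bigcup_j B_j \subseteq A^\uparrow$; applying $(-)^\downarrow$ gives $A^{\uparrow\downarrow} \subseteq \big(\bigcup_j B_j\big)^\downarrow = \bigcap_j B_j^\downarrow = \bigcap_j A_j = A$, while $A \subseteq A^{\uparrow\downarrow}$ is automatic, so $A$ is an extent. Then $\big(\bigcap_j A_j,\,(\bigcap_j A_j)^\uparrow\big)$ is a concept, and it is visibly the greatest lower bound of the $(A_j,B_j)$ in $\conlat(X,Y,R)$, since a subset of $X$ lies in every $A_j$ iff it lies in $A$. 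The displayed meet formula then follows from $\big(\bigcup_j B_j\big)^\downarrow = \bigcap_j A_j$, which yields $(\bigcap_j A_j)^\uparrow = \big(\bigcup_j B_j\big)^{\downarrow\uparrow}$. Taking $J = \varnothing$ gives $\varnothing^\downarrow = X$, so $X$ is an extent and $(X,X^\uparrow)$ is the top element; combined with the existence of all meets this already yields all joins, but one can equally just rerun the whole argument with $X,Y,R$ replaced by $Y,X$, and the converse relation $R^{-1}=\{(y,x):(x,y)\in R\}$, which swaps $(-)^\uparrow \leftrightarrow (-)^\downarrow$, extents $\leftrightarrow$ intents, and reverses the order on $\conlat$, producing the join formula directly.

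I expect the only genuine content to be the intersection-of-extents step, where the Galois connection is actually used; everything else is formal manipulation of closure operators and order isomorphisms. The one small thing not to overlook is the empty family, needed so that ``complete lattice'' includes top and bottom — handled by $\varnothing^\downarrow = X$ and $\varnothing^\uparrow = Y$.
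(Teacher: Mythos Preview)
Your proof is correct and is essentially the standard argument via the antitone Galois connection $\big((-)^\uparrow,(-)^\downarrow\big)$; the key observation that $(-)^\downarrow$ sends unions to intersections (used implicitly in your line $\big(\bigcup_j B_j\big)^\downarrow = \bigcap_j B_j^\downarrow$) is exactly what makes the intersection-of-extents step go through, and your duality argument for the join formula is clean.

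There is nothing to compare against, however: the paper does not supply its own proof of this statement. It is quoted as a foundational result from \cite{Wille1982Restructuring} and used as a black box, so your write-up stands on its own as a complete justification where the paper simply cites the literature.
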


\begin{example}\label{ex:exampleFCA}
In \cref{fig:example context} we pick a particular formal context and produce its concept lattice. Note that any formal context can be represented by a binary matrix by picking total orderings on $X$ and $Y$. Equivalently, we may display the context as an image of black and white pixels where a white pixel represents a 1, and a black pixel represents a 0:

\begin{figure}[h!]
    \centering
    \[
    \begin{array}{c|c|c|c|c}
           & y_1 & y_2 & y_3 & y_4 \\ \hline
       x_1 & 1   & 0   & 0   & 0   \\ \hline
       x_2 & 0   & 0   & 0   & 1   \\ \hline
       x_3 & 1   & 1   & 1   & 0
    \end{array}
    \qquad\qquad
    \begin{tikzpicture}[baseline=4em]
        \fill [black] (0,3) circle (0.1);
        \fill [black] (-1,2) circle (0.1);
        \fill [black] (-1,1) circle (0.1);
        \fill [black] (1,1.5) circle (0.1);
        \fill [black] (0,0) circle (0.1);
        \draw (0,0) -- (-1,1) -- (-1,2) -- (0,3) -- (1,1.5) -- (0,0);
    \end{tikzpicture}
    \qquad\qquad
    \begin{tikzpicture}[anchor=base, baseline=-1em]
        \draw[nice-pink, line width=2pt] (-0.0352,-1.0352) rectangle ++ (2.07,1.57);
        \fill[black] (0.5, 0.0) rectangle ++(0.51, 0.51);
        \fill[black] (1.0, 0.0) rectangle ++(0.51, 0.51);
        \fill[black] (1.5, 0.0) rectangle ++(0.51, 0.51);
        \fill[black] (0.0, -0.5) rectangle ++(0.51, 0.51);
        \fill[black] (0.5, -0.5) rectangle ++(0.51, 0.51);
        \fill[black] (1.0, -0.5) rectangle ++(0.51, 0.51);
        \fill[black] (1.5, -1.0) rectangle ++(0.51, 0.51);
    \end{tikzpicture}
    \]
    \caption{An example of a finite formal context, the corresponding finite lattice, and the pictorial representation of the context.}
    \label{fig:example context}
\end{figure}
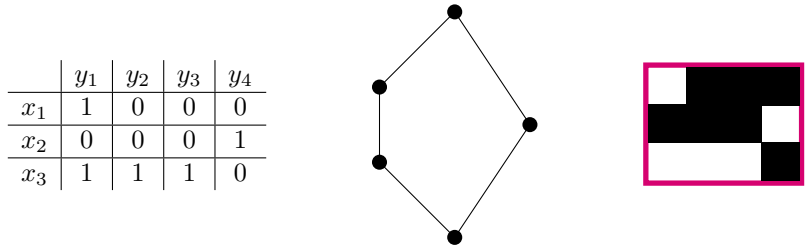

In \Cref{fig:example context}, the minimum element of the lattice is the formal concept $(\varnothing, \{y_1, y_2, y_3, y_4\})$. The maximum element of the lattice is the formal concept $(\{x_1, x_2, x_3\}, \varnothing)$. The other three formal concepts are $(\{x_3\}, \{y_1, y_2, y_3\})$, $(\{x_1,x_3\}, \{y_1\})$, and $(\{x_2\}, \{y_4\})$.
\end{example}

Note that the context of \cref{ex:exampleFCA} contains redundancies -- that is, the second and third columns are equal.  In other words, we have two objects with exactly the same attributes. This duplication of objects has no effect on the resulting lattice; one can remove a duplicated column with no effect on the resulting lattice. There is a more general fact: one can remove any row or column which is the intersection of other rows or columns.

\begin{definition}
    A context $(X,Y,R)$ is \emph{reduced} if no row (resp., column) can be obtained by intersecting other rows (resp., columns).
\end{definition}

Given any finite context, one can \emph{reduce} it by iteratively removing offending rows and columns until a reduced context is obtained.

\begin{proposition}[{\cite[Proposition 14]{FCAbook}}]
    Let $(X,Y,R)$ be a finite context and let $(X',Y',R)$ be its reduction. Then there is a lattice isomorphism
    \[
    \conlat(X, Y, R) \cong \conlat(X', Y', R).
    \]
    That is, the formation of the concept lattice is insensitive to taking the reduction.
\end{proposition}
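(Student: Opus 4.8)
The plan is to reduce everything to a single elementary move --- deleting one reducible object (row) --- and then to iterate. So suppose $x_0 \in X$ has the property that its row is the intersection of the others: writing $x^R := \{x\}^\uparrow = \{y \in Y : (x,y)\in R\}$ for the attribute set of an object $x$, this means there is some $S \subseteq X \setminus\{x_0\}$ with $x_0^R = \bigcap_{x\in S} x^R$. Let $(X',Y,R')$ be the context with $X' = X\setminus\{x_0\}$ and $R' = R\cap(X'\times Y)$. It is enough to show $\conlat(X,Y,R)\cong\conlat(X',Y,R')$: reduction removes finitely many rows and columns one at a time, each removal strictly decreasing $\abs{X}+\abs{Y}$ until no row or column is reducible, and the case of deleting a reducible column is handled by the same argument after interchanging the roles of objects and attributes (equivalently, by applying the object case to the transposed context, whose concept lattice is the order-dual).

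The key point is that the concept lattice is determined by its poset of \emph{intents}. For any context, $(A,B)\mapsto B$ is a bijection from $\conlat(X,Y,R)$ onto $\mathcal{I}(X,Y,R):=\{A^\uparrow : A\subseteq X\}=\bigl\{\bigcap_{x\in A} x^R : A\subseteq X\bigr\}\subseteq\mathcal{P}(Y)$, with inverse $B\mapsto(B^\downarrow,B)$ (using the Galois-connection identity $A^{\uparrow\downarrow\uparrow}=A^\uparrow$), and since $(A_1,B_1)\le(A_2,B_2)\iff B_1\supseteq B_2$ this bijection is an anti-isomorphism onto $(\mathcal{I}(X,Y,R),\subseteq)$. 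Hence it suffices to prove that the two posets $(\mathcal{I}(X,Y,R),\subseteq)$ and $(\mathcal{I}(X',Y,R'),\subseteq)$ are literally equal; the desired map is then the composite
\[
  \conlat(X,Y,R)\;\xrightarrow{\ \sim\ }\;(\mathcal{I}(X,Y,R),\subseteq)\;=\;(\mathcal{I}(X',Y,R'),\subseteq)\;\xrightarrow{\ \sim\ }\;\conlat(X',Y,R'),
\]
a bijection which is order-preserving (two anti-isomorphisms compose to an isomorphism) and therefore, between complete lattices, automatically preserves all meets and joins.

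It remains to check $\mathcal{I}(X,Y,R)=\mathcal{I}(X',Y,R')$. The inclusion $\supseteq$ is trivial since $X'\subseteq X$. For $\subseteq$, take $\bigcap_{x\in A}x^R$ with $A\subseteq X$; if $x_0\notin A$ this already lies in $\mathcal{I}(X',Y,R')$, and if $x_0\in A$ then, substituting $x_0^R=\bigcap_{x\in S}x^R$,
\[
  \bigcap_{x\in A}x^R=x_0^R\cap\bigcap_{x\in A\setminus\{x_0\}}x^R=\bigcap_{x\in(A\setminus\{x_0\})\cup S}x^R,
\]
and $(A\setminus\{x_0\})\cup S\subseteq X'$, so this set also belongs to $\mathcal{I}(X',Y,R')$. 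This single computation is the whole content of the argument --- it is the only place where reducibility of $x_0$ is used --- and everything else is bookkeeping. I do not expect a genuine obstacle here; the only things requiring a moment's care are the iteration (automatic, by finiteness of $X\cup Y$) and keeping track of the order-reversal, which is harmless since it occurs symmetrically on both ends of the composite.
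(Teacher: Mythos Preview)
Your argument is correct. The paper does not supply its own proof of this proposition---it merely cites \cite[Proposition 14]{FCAbook}---so there is nothing to compare against. Your reduction to a single row-deletion, the identification of $\conlat(X,Y,R)$ with the poset of intents $\mathcal{I}(X,Y,R)=\{\bigcap_{x\in A}x^R:A\subseteq X\}$, and the one-line substitution showing $\mathcal{I}$ is unchanged when a reducible object is removed are all sound. The only implicit hypothesis is finiteness of the context (needed for the iteration on $\abs{X}+\abs{Y}$ to terminate), and the paper does assume this explicitly in the sentence immediately preceding the proposition.
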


Important to this current paper is the somewhat surprising result is that there exists a converse to \cref{thm:FTFCA1}. That is, every finite lattice arises as the concept lattice of some finite formal context. As suggested by the above proposition, this context is not uniquely determined, but there is a unique \emph{reduced} formal context representing each finite lattice.

Recall that for $L$ a lattice, an element $x \in L$ is said to be \emph{join-irreducible} (resp., \emph{meet-irreducible}) if, for all $X \subseteq L$, $x = \bigvee X$ implies $x \in X$ (resp., $x = \bigwedge X$ implies $x \in X$). We denote by $J(L)$ the set of join-irreducible elements of $L$, and by $M(L)$ the collection of meet-irreducible elements of $L$. By definition, $J(L) = M(L^\mathrm{op})$ and $M(L) = J(L^\mathrm{op})$.

\begin{theorem}[The Fundamental Theorem of FCA, Part II \cite{Wille1982Restructuring}]\label{thm:FTFCA2}
    Let $L$ be a complete lattice such that each element of $L$ can be expressed both as a join of join-irreducible elements and as a meet of meet-irreducible elements. Then there is a canonical isomorphism $L \to \conlat(J(L), M(L), {\leq})$ given by
    \[z \mapsto (\{x \in J(L) : x \leq z\},\{y \in M(L) : z \leq y\}).\]

    Moreover, the context $(J(L), M(L), {\leq})$ is reduced, and is the unique (up to isomorphism) reduced context with concept lattice isomorphic to $L$.
\end{theorem}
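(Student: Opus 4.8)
This is Wille's theorem, and I would reproduce the classical argument. Write it down explicitly, check it lands in formal concepts, promote it to an order isomorphism, and then treat reducedness and uniqueness; every step is routine manipulation of the Galois connection between $({-})^\uparrow$ and $({-})^\downarrow$ once one has a single structural input, a \emph{density lemma}. For $z \in L$ abbreviate $\gamma(z) = \{x \in J(L) : x \le z\}$ and $\mu(z) = \{y \in M(L) : z \le y\}$, so the claimed map is $\varphi \colon z \mapsto (\gamma(z),\mu(z))$.

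\textbf{The density lemma.} I would first prove that $J(L)$ is join-dense and $M(L)$ meet-dense, i.e.\ $z = \bigvee \gamma(z)$ and $z = \bigwedge \mu(z)$ for all $z$. For finite $L$ (the case of interest, and essentially the only case where the statement holds verbatim) this is an induction on $z$ along the well-founded strict order: if $z$ is join-irreducible or is the bottom there is nothing to prove, and otherwise $z = \bigvee X$ with $z \notin X$, which forces $w < z$ for every $w \in X$, so the inductive hypothesis rewrites each $w$ — hence $z$ — as a join of join-irreducibles $\le z$; the reverse inequality is trivial. (For a general complete lattice this can fail; one would then impose an algebraicity hypothesis, but all lattices in this paper are finite.) An immediate consequence, used repeatedly below, is that $\gamma$ and $\mu$ are \emph{injective}.

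\textbf{The isomorphism.} Next, $\varphi(z)$ is a formal concept: the inclusions $\mu(z) \subseteq \gamma(z)^\uparrow$ and $\gamma(z) \subseteq \mu(z)^\downarrow$ hold by transitivity, and their converses say exactly that an element of $M(L)$ above every join-irreducible $\le z$ lies above $z$ (and dually), which is $z = \bigvee\gamma(z)$ (resp.\ $z = \bigwedge\mu(z)$); so $\gamma(z)^\uparrow = \mu(z)$ and $\mu(z)^\downarrow = \gamma(z)$. Then $\varphi$ is visibly order-preserving, and order-reflecting because $\gamma(z_1)\subseteq\gamma(z_2)$ gives $z_1 = \bigvee\gamma(z_1) \le \bigvee\gamma(z_2) = z_2$; hence $\varphi$ is injective. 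For surjectivity, given a concept $(A,B)$ set $z = \bigvee A$: then $A \subseteq \gamma(z)$ trivially, $A^\uparrow = \{y \in M(L): z \le y\} = \mu(z)$ so $A = B^\downarrow = \mu(z)^\downarrow$, and any $x \in J(L)$ with $x \le z$ lies below every $y \in \mu(z)$, so $x \in \mu(z)^\downarrow = A$; thus $\gamma(z) = A$ and $\mu(z) = \gamma(z)^\uparrow = A^\uparrow = B$, i.e.\ $\varphi(z) = (A,B)$. A bijection between complete lattices preserving and reflecting order is an isomorphism, and $\varphi$ involves no choices, giving the canonical isomorphism asserted.

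\textbf{Reducedness, uniqueness, and the main obstacle.} In $(J(L), M(L), {\le})$ the row of $x \in J(L)$ is $\mu(x)$ and, for $S \subseteq J(L)$, $\bigcap_{x' \in S}\mu(x') = \mu(\bigvee S)$; if this equalled $\mu(x)$ then injectivity of $\mu$ forces $x = \bigvee S$, hence $x \in S$ by join-irreducibility, so no row is an intersection of other rows — dually for columns, using injectivity of $\gamma$ and meet-irreducibility. For uniqueness I would invoke the standard facts (provable by running the same density argument inside an arbitrary $\conlat(X,Y,R)$; see \cite{FCAbook}) that in a reduced context the object concepts $\gamma x = (\{x\}^{\uparrow\downarrow},\{x\}^{\uparrow})$ are exactly $J(\conlat(X,Y,R))$ with $x \mapsto \gamma x$ bijective, dually the attribute concepts $\mu y$ exhaust $M(\conlat(X,Y,R))$, and $(x,y) \in R \iff \gamma x \le \mu y$. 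Given any isomorphism $\psi \colon \conlat(X,Y,R) \to L$, post-composition with $\psi$ turns these into bijections $X \to J(L)$, $Y \to M(L)$ (isomorphisms preserve join- and meet-irreducibility) carrying $R$ onto $\le$ on $J(L) \times M(L)$, so $(X,Y,R) \cong (J(L), M(L), {\le})$ as contexts. The only genuine obstacle is the density lemma; everything else is Galois-connection bookkeeping. The subtlety worth flagging is that the lemma — and hence the theorem in the form stated — really does rely on finiteness of $L$, which is available throughout this paper, so the Noetherian induction above suffices everywhere, including for the auxiliary "object concepts $=$ join-irreducibles" fact in the uniqueness step.
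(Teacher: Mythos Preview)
The paper does not prove this theorem at all; it is quoted as background from \cite{Wille1982Restructuring}, so there is no in-paper argument to compare against. Your sketch is the standard proof and is correct for finite lattices, which is the only setting the paper ever uses.

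Your caveat about finiteness is well taken and worth emphasizing: as literally stated (``Let $L$ be a complete lattice''), the theorem is false, since $J(L)$ need not be join-dense in an arbitrary complete lattice (e.g.\ $L=[0,1]$ has no nontrivial join-irreducibles). The correct general hypothesis is that $J(L)$ be join-dense and $M(L)$ meet-dense, or more restrictively that $L$ be doubly founded; finiteness guarantees this via exactly the Noetherian induction you give. Since the paper immediately restricts to finite lattices after stating the theorem, this is a harmless imprecision in the exposition rather than a gap in your argument.
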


In what follows, we will be interested only in finite lattices. All finite lattices satisfy the hypothesis of \cref{thm:FTFCA2}, and in a finite lattice the join and meet-irreducible elements can be characterized finitarily.

\begin{proposition}
    Let $L$ be a finite lattice with minimum element $\bot$ and maximum element $\top$. An element $x \in L$ is:
    \begin{enumerate}
        \item join-irreducible if and only if $x \neq \bot$ and, for all $a, b \in L$, $x = a \vee b$ implies $x = a$ or $x = b$;
        \item meet-irreducible if and only if $x \neq \top$ and, for all $a, b \in L$, $x = a \wedge b$ implies $x = a$ or $x = b$.
    \end{enumerate}
\end{proposition}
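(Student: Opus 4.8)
The plan is to prove statement (1) directly and then obtain (2) by duality. For (1), I would first dispatch the side condition $x \neq 0$: in the definition of join-irreducibility one is permitted to take $X = \varnothing$, and since $\bigvee \varnothing = 0$, an element equal to $0$ would be forced to lie in $\varnothing$, which is absurd; hence every join-irreducible element is nonzero, and conversely the proposed characterization explicitly demands $x \neq 0$. This reduces both implications to comparing the ``binary'' splitting condition ($x = a \vee b \implies x = a$ or $x = b$) with the ``arbitrary subset'' splitting condition from the definition.

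The forward implication is immediate: if $x$ is join-irreducible and $x = a \vee b$, then $x = \bigvee\{a,b\}$, so $x \in \{a,b\}$, i.e.\ $x = a$ or $x = b$. For the reverse implication I would argue by induction on $\abs{X}$, which is legitimate since $L$ is finite and hence every subset $X \subseteq L$ is finite. The cases $\abs{X} \le 1$ are handled by the conventions $\bigvee \varnothing = 0 \neq x$ (vacuous) and $\bigvee\{a\} = a$ (so $x = a \in X$). For $\abs{X} \ge 2$, pick any $a \in X$, set $X' = X \setminus \{a\} \neq \varnothing$, and write $x = \bigvee X = a \vee \bigvee X'$; the binary hypothesis gives $x = a \in X$ or $x = \bigvee X'$, and in the latter case the inductive hypothesis (applicable since $1 \le \abs{X'} < \abs{X}$) yields $x \in X' \subseteq X$.

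Finally, (2) follows by applying (1) to the opposite lattice $L^\mathrm{op}$: its minimum element is $1$, its binary join is the binary meet of $L$, and $M(L) = J(L^\mathrm{op})$ by the remark preceding \cref{thm:FTFCA2}, so the characterization of $J(L^\mathrm{op})$ transports verbatim into the claimed characterization of $M(L)$.

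There is no serious obstacle here; the only points requiring a little care are the bookkeeping around the empty-join convention $\bigvee \varnothing = 0$ (which is precisely what forces $x \neq 0$) and seeding the induction at $\abs{X} = 1$ rather than $\abs{X} = 0$.
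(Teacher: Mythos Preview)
Your argument is correct. The handling of the empty join to isolate the side condition $x\neq 0$, the one-line forward direction, the induction on $\abs{X}$ for the converse, and the passage to $L^\mathrm{op}$ for (2) are all sound; the only delicate point is exactly the one you flagged, namely that the base of the induction must be seeded at $\abs{X}=1$ because the $\abs{X}=0$ case is disposed of vacuously by $x\neq 0$.

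There is nothing to compare against: the paper states this proposition as a standard lattice-theoretic fact and gives no proof of its own. Your write-up would serve perfectly well as the omitted argument.
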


\section{Join and Meet-Irreducible Transfer Systems}\label{sec:joinmeet}

In \cref{sec:primer} we introduced the basics of FCA needed for this paper. In this section we will briefly recall the necessary machinery for transfer and cotransfer systems. We will then give explicit descriptions of the join and meet-irreducible elements of the lattice of transfer systems as well as the inclusion relation between them which will then allow us to appeal to \cref{thm:FTFCA2}.

We will use the term ``$G$-lattice'' to refer to a lattice $L$ equipped with an action of a \emph{finite} group $G$ by lattice automorphisms. The key example of interest is $\Sub(G)$, the subgroup lattice of $G$, on which $G$ acts by conjugation.

\begin{definition}
    Let $(L,\leq)$ be a $G$-lattice. A \emph{$G$-relation} is a partial order $\to$ on $L$ which refines $\leq$ (i.e., $x \to y$ implies $x \leq y$) and respects the action of $G$ (i.e., $x \to y$ implies $g \cdot x \to g \cdot y$ for all $g \in G$). A $G$-relation is a:
    \begin{itemize}
        \item \emph{transfer system} if whenever $x \to y$ and $x' \leq y$, we have $x \wedge x' \to x'$;
        \item \emph{cotransfer system} if whenever $x \to y$ and $x \leq y'$, we have $y' \to y \vee y'$.
    \end{itemize}
\end{definition}

The naming here reflects the fact that cotransfer systems on $L$ are in canonical bijective correspondence with transfer systems on $L^\mathrm{op}$.

\begin{warning}\label{warn:trL=coTrLop}
    Henceforth, we identify cotransfer systems on $L$ with transfer systems on $L^\mathrm{op}$. Strictly speaking, a transfer system on $L^\mathrm{op}$ is a subcategory of $L^\mathrm{op}$, whereas a cotransfer system on $L$ is a subcategory of $L$. The canonical identification simply reverses all of the arrows in a (co)transfer system.
\end{warning}

\begin{lemma}[{\cite[Proposition 3.7]{fooqw}}]\label{lem:transferlattice}
    Let $L$ be a $G$-lattice. The collection $\Tr(L)$ of transfer systems (resp., $\mathsf{coTr}(L)$ of cotransfer systems) on $L$ is partially ordered by refinement (i.e., $\subseteq$). With this ordering, $\Tr(L)$ (resp., $\coTr(L)$) is a complete lattice whose meet operation is given by intersection.
\end{lemma}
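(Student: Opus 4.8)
The plan is to reduce everything to $\Tr(L)$ via \cref{warn:trL=coTrLop}: since cotransfer systems on $L$ are identified with transfer systems on $L^{\mathrm{op}}$, and $L^{\mathrm{op}}$ is again a $G$-lattice, any assertion proved for $\Tr(-)$ uniformly over all $G$-lattices yields the corresponding assertion for $\coTr(L) = \Tr(L^{\mathrm{op}})$. So I focus on $\Tr(L)$. That $(\Tr(L), \subseteq)$ is a poset is immediate, since inclusion of subsets of the fixed set ${\leq} \subseteq L \times L$ is a partial order. The real content is to show that $\Tr(L)$ is a \emph{closure system} (Moore family) inside the powerset lattice $\mathcal{P}({\leq})$: that it is closed under arbitrary intersections and contains a top element. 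A nonempty subset of a complete lattice that is closed under arbitrary meets is itself a complete lattice, with the same meet and with the join of a family given by the meet of the set of its upper bounds lying in the subset; this will deliver the result and simultaneously identify the meet in $\Tr(L)$ with intersection.

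\textbf{Top element.} I claim ${\leq}$ itself is a transfer system on $L$. It is a partial order refining itself; it respects the $G$-action because $G$ acts by lattice automorphisms, hence by monotone maps; and the transfer axiom holds trivially, since if $x \leq y$ and $x' \leq y$ then $x \wedge x' \leq x'$ by definition of the meet. Hence ${\leq} \in \Tr(L)$, and as it contains every $G$-relation it is the maximum of $(\Tr(L), \subseteq)$.

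\textbf{Closure under intersection.} Let $\{T_j\}_{j \in J}$ be a family of transfer systems on $L$ (allowing $J = \varnothing$, in which case $\bigcap_j T_j = {\leq}$, already handled) and set $T = \bigcap_{j \in J} T_j$. Reflexivity and transitivity of $T$ follow since each $T_j$ has these properties and both are preserved by intersection; antisymmetry of $T$ is inherited from any single $T_j$ (if $x \to y$ and $y \to x$ in $T$ then both hold in some $T_j$, forcing $x = y$); and $T$ refines ${\leq}$ and is $G$-stable because each $T_j$ is. For the transfer axiom: if $x \to y$ in $T$ and $x' \leq y$, then $x \to y$ in every $T_j$, so $x \wedge x' \to x'$ in every $T_j$, hence $x \wedge x' \to x'$ in $T$. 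Thus $T \in \Tr(L)$. Since $T$ is contained in each $T_j$ and contains every common lower bound, it is the infimum of $\{T_j\}$ in $(\Tr(L), \subseteq)$, which shows meets in $\Tr(L)$ are computed by intersection.

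\textbf{Conclusion.} By the previous two steps $(\Tr(L), \subseteq)$ is a nonempty poset closed under all meets, so for any family $\{T_j\}_{j \in J}$ the set $U = \{ S \in \Tr(L) : S \supseteq T_j \text{ for all } j \}$ is nonempty (it contains ${\leq}$), and $\bigwedge U \in \Tr(L)$ is an upper bound for the family that lies below every other upper bound, hence equals $\bigvee_{j} T_j$. Therefore $\Tr(L)$ is a complete lattice with meet given by intersection, and applying the same argument to $L^{\mathrm{op}}$ settles $\coTr(L)$. The only point needing any care — and it is minor — is checking that the transfer-system axiom survives intersection and that ${\leq}$ satisfies it; everything else is the standard fact that a Moore family in a complete lattice is a complete lattice. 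The substance of the lemma is thus structural: all the defining conditions of a transfer system are preserved under intersection, and they are jointly satisfied by the maximal relation ${\leq}$.
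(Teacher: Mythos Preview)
Your proof is correct. The paper itself does not supply a proof of this lemma at all: it is stated with a bare citation to \cite[Proposition 3.7]{fooqw} and no argument is given in the text. What you have written is the standard Moore-family argument, and every step checks out --- in particular the two substantive verifications (that ${\leq}$ satisfies the transfer axiom, and that the axiom is preserved under intersection) are handled cleanly. One cosmetic point: your justification of antisymmetry via ``any single $T_j$'' tacitly assumes $J \neq \varnothing$; since you have already disposed of the empty case separately this is fine, but it would be slightly cleaner to note that $T \subseteq {\leq}$ and inherit antisymmetry from ${\leq}$ directly.
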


A key problem in homotopical combinatorics is to understand the lattice $\Tr(\Sub(G))$ for finite groups of interest. What we shall see is that it is somewhat easy to describe the join and meet-irreducible elements of $\Tr(\Sub(G))$ without having to compute any transfer systems, and we moreover have a characterization of when a particular join-irreducible will be contained in a particular meet-irreducible. This will then allow us to appeal to \Cref{thm:FTFCA2} to describe the concept lattice associated to $\Tr(\Sub(G))$ in terms of $\Sub(G)$.

\begin{definition}\label{def:cogen}
    Let $(L,\leq)$ be a $G$-lattice, and let $x, y \in L$ be elements such that $x \leq y$. We denote by:
    \begin{itemize}
        \item $\floor{x \to y}$ the smallest transfer system $\to$ such that $x \to y$, i.e., the transfer system generated by the pair $(x,y)$.
        \item $\ceil{x \to y}$ the smallest cotransfer system $\to$ such that $x \to y$, i.e., the cotransfer system generated by the pair $(x,y)$.
    \end{itemize}
\end{definition}

We will now show that the join-irreducible elements of $\Tr(L)$ are precisely those of the form $\floor{x \to y}$ with $x < y$ (and likewise for $\coTr(L)$), which will be \cref{prop:join_irr,prop:meet_irr}. To best classify these join-irreducibles, we record the following lemma, where we use \cite[Theorem A.2]{rubin} (which is stated specifically for the case $L = \Sub(G)$, but holds in this greater generality). Since $\coTr(L) = \Tr(L^\mathrm{op})$, we will focus on $J(\Tr(L))$ and deduce the structure of $J(\coTr(L))$ as a formal consequence. In the case that $L$ has trivial $G$-action, this classification is due to Luo and Rognerud \cite[Proposition 1.3]{luo2024latticeweakfactorizationsystems}; our work here serves only to generalize their result to the case of an arbitrary $G$-lattice.

\begin{lemma}\label{lem:genconj}
    Let $(L, \leq)$ be a $G$-lattice, and let $x,x',y,y' \in L$ be elements such that $x < y$ and $x' < y'$. The following are equivalent:
    \begin{enumerate}
        \item $\floor{x \to y} = \floor{x' \to y'}$;
        \item There is some $g \in G$ such that $g \cdot x = x'$ and $g \cdot y = y'$.
    \end{enumerate}
\end{lemma}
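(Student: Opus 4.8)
The implication $(2)\Rightarrow(1)$ is immediate and I would dispatch it first. A transfer system is in particular a $G$-relation, hence invariant as a set of pairs under the $G$-action, so $g\cdot\floor{x\to y}=\floor{x\to y}$; on the other hand $R\mapsto g\cdot R$ is an automorphism of the lattice $\Tr(L)$, so it sends the smallest transfer system containing $(x,y)$ to the smallest transfer system containing $(g\cdot x,g\cdot y)=(x',y')$, i.e.\ $g\cdot\floor{x\to y}=\floor{x'\to y'}$. Combining the two identities gives $(1)$.

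For $(1)\Rightarrow(2)$ the plan is to reconstruct the $G$-orbit of the pair $(x,y)$ from the transfer system $\floor{x\to y}$ alone. The central input is that $\floor{x\to y}$ is contained in the explicit transfer system
\[
  \mathcal{M} \coloneqq \{(a,a):a\in L\}\cup\bigl\{(a,b): a<b,\ \exists\, g\in G\text{ with }b\leq g\cdot y\text{ and }a\leq g\cdot x\bigr\}.
\]
A routine check (reflexivity, transitivity, $G$-equivariance, the transfer axiom) shows $\mathcal{M}$ is a transfer system, and visibly $(x,y)\in\mathcal{M}$, so $\floor{x\to y}\subseteq\mathcal{M}$ by minimality; alternatively this containment can be read off the explicit description of generated transfer systems in \cite[Theorem A.2]{rubin}. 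Set $T(\mathcal{R})\coloneqq\{b\in L:\ a\to b\text{ in }\mathcal{R}\text{ for some }a<b\}$, which depends only on $\mathcal{R}$. I would first show $\max T(\floor{x\to y})=G\cdot y$: by $\floor{x\to y}\subseteq\mathcal{M}$ every $b\in T(\floor{x\to y})$ lies below some $g\cdot y$; each $g\cdot y$ itself lies in $T(\floor{x\to y})$ (as $g\cdot x<g\cdot y$ and $g\cdot x\to g\cdot y$); and the orbit $G\cdot y$ is an antichain since $G$ is finite (if $g\cdot y\leq g'\cdot y$ then, applying $g^{-1}$, the finite-order automorphism $g^{-1}g'$ cannot strictly increase $y$ inside the finite orbit, so $g\cdot y=g'\cdot y$). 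These three facts force the maximal elements of $T(\floor{x\to y})$ to be exactly $G\cdot y$, and since $T(-)$ depends only on the transfer system, the hypothesis $\floor{x\to y}=\floor{x'\to y'}$ yields $G\cdot y=G\cdot y'$; fix $g_0\in G$ with $g_0\cdot y=y'$.

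Next, for a maximal element $b\in T(\floor{x\to y})$ (necessarily $b=g_1\cdot y$ for some $g_1$), I would show that the maximal elements of $S_b\coloneqq\{a<b: a\to b\text{ in }\floor{x\to y}\}$ are exactly $\{g\cdot x: g\in G,\ g\cdot y=b\}$. Each such $g\cdot x$ lies in $S_b$, because $g\cdot x\to g\cdot y=b$; the orbit $G\cdot x$ is an antichain (same argument as above); and conversely, if $a\in S_b$ then $\floor{x\to y}\subseteq\mathcal{M}$ furnishes $g\in G$ with $b\leq g\cdot y$ and $a\leq g\cdot x$, while maximality of $b$ in $T(\floor{x\to y})$ forces $g\cdot y=b$ (two elements of the antichain $G\cdot y$ related by $b\leq g\cdot y$), so $a\leq g\cdot x\in S_b$. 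This description of $S_b$ applies equally to $\floor{x'\to y'}$, so taking $b=y'$ in both presentations of the common transfer system gives
\[
  \{g\cdot x: g\cdot y=y'\}=\{g\cdot x': g\cdot y'=y'\}.
\]
Since $g_0\cdot x$ lies in the left-hand set, $g_0\cdot x=h\cdot x'$ for some $h\in\operatorname{Stab}(y')$, and then $g\coloneqq h^{-1}g_0$ satisfies $g\cdot x=x'$ and $g\cdot y=h^{-1}\cdot y'=y'$, which is $(2)$.

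I expect the main obstacle to be the two reconstruction steps just outlined — certifying that the only obstruction to an element being maximal in $T(\floor{x\to y})$, respectively in $S_b$, is that it lies below a genuine $G$-translate of $y$, respectively of $x$. This is precisely the job of the containment $\floor{x\to y}\subseteq\mathcal{M}$ (or, equivalently, of \cite[Theorem A.2]{rubin}), together with finiteness of $G$, which rules out infinite strictly ascending or descending chains inside a $G$-orbit and hence makes those orbits antichains. The rest is routine bookkeeping.
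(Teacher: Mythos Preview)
Your argument is correct. The direction $(2)\Rightarrow(1)$ is handled the same way in both proofs. For $(1)\Rightarrow(2)$ you take a genuinely different route from the paper. The paper works with Rubin's description $\floor{x\to y}=\mathcal{X}_2^\circ$ and, given $x'\to y'\in\floor{x\to y}$, writes this arrow as a composite of generators $(g_i\cdot x)\wedge z_i\to z_i$ and inducts along the chain to pin down $x'$ and $y'$ as simultaneous $G$-translates of $x$ and $y$. You instead use only the containment $\floor{x\to y}\subseteq\mathcal{M}$ (your $\mathcal{M}$ is a transfer system strictly larger than $\mathcal{X}_2$ in general, but that does not matter) and then characterize the $G$-orbits intrinsically: $G\cdot y$ is recovered as the set of maximal nontrivial targets, and for each such maximal target $b$ the set $\{g\cdot x:g\cdot y=b\}$ is recovered as the maximal sources mapping to $b$. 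Both proofs rest on the same two inputs---the explicit upper bound coming from Rubin's description and the fact that finite-group orbits in a poset are antichains---but your packaging avoids the chain-chasing induction and makes transparent exactly which features of $\floor{x\to y}$ determine the orbit of $(x,y)$. The paper's approach, on the other hand, uses the full equality $\floor{x\to y}=\mathcal{X}_2^\circ$ rather than just a containment, and its inductive argument is closer in spirit to the proof of \cref{lem:principal implies join irr}, so it dovetails slightly better with the surrounding material.
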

\begin{proof}
    We use Rubin's description of $\floor{x \to y}$ from \cite[Theorem A.2]{rubin}, which is as follows: let
    \begin{align*}
        \mathcal{X}_0 &= \{x \to y\} & \mathcal{X}'_0 &= \{x' \to y'\} \\
        \mathcal{X}_1 &= \{g \cdot x \to g \cdot y : g \in G\} & \mathcal{X}'_1 &= \{g \cdot x' \to g \cdot y' : g \in G\} \\
        \mathcal{X}_2 &= \{(g \cdot x) \wedge z \to z : g \in G, z \leq g \cdot y\} & \mathcal{X}'_2 &= \{(g \cdot x') \wedge z \to z : g \in G, z \leq g \cdot y'\} \\
        \mathcal{X}_3 &= \mathcal{X}_2^\circ & \mathcal{X}'_3 &= {\mathcal{X}'_2}^\circ
    \end{align*}
    where ${(-)}^\circ$ denotes the reflexive-transitive closure of a binary relation. Then $\floor{x \to y} = \mathcal{X}_3$ and $\floor{x' \to y'} = \mathcal{X}'_3$.

    From this description, it is clear that (2) implies (1): if (2) holds, then already $\mathcal{X}_1 = \mathcal{X}'_1$, so also $\mathcal{X}_3 = \mathcal{X}'_3$. It remains to show that (1) implies (2), so assume $\floor{x \to y} = \floor{x' \to y'}$.

    Now $x' \to y' \in \floor{x \to y}$ implies (from the definitions of $\mathcal{X}_i$'s above) that $y'$ is subconjugate to $y$, i.e., there exists some $g \in G$ such that $y' \leq g \cdot y$. Symmetrically, there exists some $g' \in G$ such that $y \leq g' \cdot y'$. Together, this yields $y' \leq (gg') \cdot y'$. Since $G$ is finite, we have
    \[y' \leq (gg') \cdot y' \leq (gg')^2 \cdot y' \leq \dots \leq (gg')^{\abs{G}} \cdot y' = y'\]
    and thus $y' = (gg') \cdot y'$, whence
    \[y' \leq g \cdot y \leq (gg') \cdot y' = y',\]
    so $y' = g \cdot y$.

    We now know that $x' \to g \cdot y \in \floor{x \to y}$, so there exists a sequence
    \[((g_1 \cdot x) \wedge z_1 \to z_1, \dots, (g_n \cdot x) \wedge z_n \to z_n)\]
    of elements of $\mathcal{X}_2$ (where $n$ is a positive integer) whose composite is $x' \to g \cdot y$. In other words, we have a sequence $g_1, \dots, g_n$ of elements of $G$ and a sequence $z_1, \dots, z_n$ of elements of $L$ such that:
    \begin{enumerate}[label=(\roman*)]
        \item $z_i \leq g_i \cdot y$ for all $1 \leq i \leq n$,
        \item $(g_1 \cdot x) \wedge z_1 = x'$,
        \item $z_i = (g_{i+1} \cdot x) \wedge z_{i+1}$ for all $1 \leq i < n$,
        \item $z_n = g \cdot y$.
    \end{enumerate}
    We will now show that $x' = g_i \cdot x \leq z_i$ for all $1 \leq i \leq n$ by induction.
    \begin{description}
        \item[Base Case] In particular, (ii) implies that $x' \leq g_1 \cdot x$. Symmetrically, we obtain $x \leq g'_1 \cdot x'$ for some $g'_1 \in G$. By the same argument as before, this implies $x' = g_1 \cdot x$. Now (ii) says that $(g_1 \cdot x) \wedge z_1 = (g_1 \cdot x)$, so $g_1 \cdot x \leq z_1$.
        \item[Inductive Step] Suppose $x' = g_i \cdot x \leq z_i$ for some $1 \leq i < n$. Then (iii) says that
        \[g_i \cdot x \leq z_i = (g_{i+1} \cdot x) \wedge z_{i+1},\]
        so in particular $g_i \cdot x \leq g_{i+1} \cdot x$. By the same argument as before, this implies $g_i \cdot x = g_{i+1} \cdot x$. Now
        \[g_{i+1} \cdot x = g_i \cdot x \leq z_i = (g_{i+1} \cdot x) \wedge z_{i+1} \leq z_{i+1}\]
        yields $x' = g_i \cdot x = g_{i+1} \cdot x \leq z_{i+1}$, as desired.
    \end{description}
    Thus we have $x' = g_n \cdot x$. By (i) and (iv) we have $g \cdot y \leq g_n \cdot y$, which implies $g \cdot y = g_n \cdot y$. Thus, $x' = g_n \cdot x$ and $y' = g \cdot y = g_n \cdot y$, as desired.
\end{proof}

\begin{lemma}\label{lem:join irr implies principal}
    Let $L$ be a $G$-lattice and let $\mathsf{T} \in J(\Tr(L))$. Then $\mathsf{T} = \floor{x \to y}$ for some $x, y \in L$ with $x < y$.
\end{lemma}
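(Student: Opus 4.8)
The plan is to show that every join-irreducible transfer system is generated by a single pair, by exhibiting an arbitrary transfer system as the join of its ``principal sub-transfer-systems'' $\floor{x \to y}$. First I would observe that for any transfer system $\mathsf{T}$, we have the tautological identity
\[\mathsf{T} = \bigvee_{(x \to y) \in \mathsf{T},\ x < y} \floor{x \to y},\]
where the join is taken in the lattice $\Tr(L)$. The inclusion $\supseteq$ is immediate since each $\floor{x \to y}$ is the \emph{smallest} transfer system containing the pair $(x,y) \in \mathsf{T}$, hence $\floor{x \to y} \subseteq \mathsf{T}$, so their join is contained in $\mathsf{T}$. For the inclusion $\subseteq$, note that every non-identity arrow $(x \to y)$ of $\mathsf{T}$ lies in $\floor{x \to y}$ and hence in the join; the identity arrows lie in every transfer system, in particular in the join (which is nonempty as long as we interpret the empty join as the trivial transfer system consisting only of identities — and if $\mathsf{T}$ is itself trivial there is nothing to prove since the trivial transfer system is the minimum of $\Tr(L)$, which is never join-irreducible).

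Having established this identity, the argument concludes quickly: if $\mathsf{T} \in J(\Tr(L))$, then by definition of join-irreducibility, writing $\mathsf{T}$ as the join of the family $\{\floor{x \to y} : (x \to y) \in \mathsf{T},\ x < y\}$ forces $\mathsf{T} = \floor{x \to y}$ for one of the indexing pairs, which is exactly the claim. One small point worth addressing explicitly: join-irreducibility as defined in the excerpt concerns arbitrary subsets $X \subseteq L$ with $x = \bigvee X$, so I should make sure the family I use is literally a subset of $\Tr(L)$ whose join is $\mathsf{T}$; this is fine, and moreover $\mathsf{T}$ itself being in $J(\Tr(L))$ rules out $\mathsf{T}$ being the bottom element, so the family is non-empty and at least one $\floor{x \to y}$ with $x < y$ genuinely occurs.

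The only mild obstacle is bookkeeping around the trivial transfer system and the empty join: I need the convention that $\bigvee \varnothing$ is the minimum of $\Tr(L)$ (the discrete relation, all identities), and I need to know this minimum is not join-irreducible — which is automatic from the characterization recalled in the excerpt, since $0$ is excluded from $J(L)$ by definition. Once that is dispatched, the proof is essentially one line plus the verification of the tautological identity, which itself only uses the definition of $\floor{x \to y}$ as a generated (hence smallest containing) transfer system together with \Cref{lem:transferlattice} guaranteeing that $\Tr(L)$ is a complete lattice so the join in question exists.

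\begin{proof}
    If $\mathsf{T}$ is the trivial (discrete) transfer system, then $\mathsf{T}$ is the minimum of $\Tr(L)$, which is never join-irreducible; so we may assume $\mathsf{T}$ contains some pair $x \to y$ with $x < y$. For each such pair, $\floor{x \to y}$ is by definition the smallest transfer system containing $x \to y$, and since $x \to y \in \mathsf{T}$ we get $\floor{x \to y} \subseteq \mathsf{T}$. Conversely, every non-identity arrow of $\mathsf{T}$ is of the form $x \to y$ with $x < y$ and hence lies in $\floor{x \to y}$, while every identity arrow lies in every transfer system. Therefore, taking the join in the complete lattice $\Tr(L)$ (\cref{lem:transferlattice}),
    \[
        \mathsf{T} = \bigvee_{\substack{(x \to y) \in \mathsf{T} \\ x < y}} \floor{x \to y}.
    \]
    Since $\mathsf{T} \in J(\Tr(L))$ and this expresses $\mathsf{T}$ as a join of a non-empty subset of $\Tr(L)$, join-irreducibility forces $\mathsf{T} = \floor{x \to y}$ for some pair $(x \to y) \in \mathsf{T}$ with $x < y$.
\end{proof}
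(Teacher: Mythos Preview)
Your proof is correct and follows essentially the same approach as the paper: express $\mathsf{T}$ as the join of the principal transfer systems $\floor{x \to y}$ over all arrows of $\mathsf{T}$, then invoke join-irreducibility. The only cosmetic difference is that the paper includes identity arrows in the indexing set (allowing $x = y$) and rules out $x = y$ at the end, whereas you exclude identities up front and dispatch the trivial case first.
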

\begin{proof}
    Let $S = \{\floor{x \to y} : x \xrightarrow{\mathsf{T}} y\}$. Then $\mathsf{T} = \bigvee S$, so $\mathsf{T} = \floor{x \to y}$ for some $x, y \in L$ with $x \leq y$. If $x = y$ then $\mathsf{T}$ is the minimum element of $\Tr(L)$, contradicting the assumption that $\mathsf{T}$ is join-irreducible.
\end{proof}

\begin{lemma}\label{lem:principal implies join irr}
    Let $L$ be a $G$-lattice and let $x,y \in L$ with $x < y$. Then
    $\floor{x \to y} \in J(\Tr(L))$.
\end{lemma}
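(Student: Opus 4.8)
The plan is to verify the finitary criterion for join-irreducibility stated above. Since $x < y$, the relation $x \to y$ is a non-identity member of $\floor{x \to y}$, so $\floor{x \to y}$ is not the minimum of $\Tr(L)$. It therefore remains to show: if $\floor{x \to y} = \mathsf{A} \vee \mathsf{B}$ with $\mathsf{A}, \mathsf{B} \in \Tr(L)$, then $\mathsf{A} = \floor{x \to y}$ or $\mathsf{B} = \floor{x \to y}$. Because $\floor{x \to y}$ is by definition the smallest transfer system containing $x \to y$, it is enough to prove that $x \to y$ itself belongs to $\mathsf{A}$ or to $\mathsf{B}$. Note that $\mathsf{A}, \mathsf{B} \subseteq \mathsf{A} \vee \mathsf{B} = \floor{x \to y}$, so throughout the argument every relation appearing in $\mathsf{A} \cup \mathsf{B}$ is in particular a relation of $\floor{x \to y}$.

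I will use two facts. First, for transfer systems $\mathsf{A}, \mathsf{B}$ the join $\mathsf{A} \vee \mathsf{B}$ is exactly the reflexive--transitive closure of $\mathsf{A} \cup \mathsf{B}$: this closure is plainly reflexive, transitive and $G$-stable, and it satisfies the transfer axiom by an induction along a composite $t_0 \to \dots \to t_k$ that applies the transfer axiom of $\mathsf{A}$ and of $\mathsf{B}$ one step at a time (this is the heart of Rubin's closure description in \cite[Theorem~A.2]{rubin}). In particular, $x \to y \in \mathsf{A} \vee \mathsf{B}$ yields a composite $x = w_0 \to w_1 \to \dots \to w_n = y$ in which every step $w_{i-1} \to w_i$ lies in $\mathsf{A}$ or in $\mathsf{B}$. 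Second --- and this is the substantive point --- I claim that the only non-identity relation of $\floor{x \to y}$ whose target is $y$ and whose source is $\geq x$ is $x \to y$ itself. This follows from the description $\floor{x \to y} = \mathcal{X}_2^\circ$ recalled in the proof of \cref{lem:genconj}: any composite of members of $\mathcal{X}_2$ ending at $y$ has last step of the shape $(g \cdot x) \wedge z \to z$ with $z = y$, so $y \leq g \cdot y$ and hence $y = g \cdot y$ by the finiteness argument used in \cref{lem:genconj}; then the source of that last step is $(g \cdot x) \wedge y = g \cdot x$, and if the source $c$ of the whole composite satisfies $x \leq c$ then, as the vertices of the composite are weakly increasing, $x \leq c \leq g \cdot x$, so $g \cdot x = x$ and therefore $c = g \cdot x = x$.

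To conclude, choose a composite $x = w_0 \to \dots \to w_n = y$ with steps in $\mathsf{A} \cup \mathsf{B}$ and with $n$ as small as possible. Then $n \geq 1$ (as $x < y$) and no step is an identity (a repeated vertex could be deleted, contradicting minimality). From $x = w_0 \leq w_1 \leq \dots \leq w_{n-1}$ we get $x \leq w_{n-1}$, and the last step $w_{n-1} \to y$ is a non-identity relation of $\mathsf{A} \cup \mathsf{B} \subseteq \floor{x \to y}$; the claim of the previous paragraph forces $w_{n-1} = x$. If $n \geq 2$ this gives $w_1 = x = w_0$, so the first step is an identity, a contradiction. Hence $n = 1$, so $x \to y$ is a single step of the composite and thus lies in $\mathsf{A}$ or in $\mathsf{B}$, completing the proof. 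The main obstacle is establishing the structural claim in the second paragraph; the remaining steps are routine manipulations of the generation operators, parallel to those already carried out for \cref{lem:genconj}.
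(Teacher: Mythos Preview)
Your proof is correct and follows essentially the same strategy as the paper's: both write $x \to y$ as a composite coming from the join and use the finiteness of $G$ (the ``$y \leq g\cdot y$ forces $y = g\cdot y$'' trick) to pin the endpoints down to $x$ and $y$. Your organisation is a modest streamlining of the paper's argument: you exploit that $\mathsf{A}$ and $\mathsf{B}$ are already restriction- and $G$-closed to reduce Rubin's join formula to $\mathsf{A}\vee\mathsf{B}=(\mathsf{A}\cup\mathsf{B})^{\circ}$, and you isolate the structural fact ``the only non-identity arrow of $\floor{x\to y}$ with target $y$ and source $\geq x$ is $x\to y$'' so that only the last step of the composite needs analysis; the paper instead keeps Rubin's full join description and runs an induction along the whole chain to show each intermediate source equals $x$.
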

\begin{proof}
    Suppose
    \[\floor{x \to y} = \bigvee \mathcal{T}\]
    for some $\mathcal{T} \subseteq \Tr(L)$. By \cite[Theorem A.2]{rubin}, since $\bigcup \mathcal{T}$ is closed under the action on $G$, we have
    \[\bigvee \mathcal{T} = \left(\bigcup_{\mathsf{T} \in \mathcal{T}} \bigcup_{a \to b \in \mathsf{T}} \{a \wedge z \to z : z \leq b\}\right)^\circ,\]
    where again $({-})^\circ$ denotes the reflexive-transitive closure of a binary relation. Since $x < y$, we have a sequence $\mathsf{T}_1, \dots, \mathsf{T}_n$ of elements of $\mathcal{T}$ and sequences $a_1, \dots, a_n$, $b_1, \dots, b_n$, and $z_1, \dots, z_n$ of elements of $L$ (where $n$ is a positive integer) such that:
    \begin{enumerate}[label=(\roman*)]
        \item $a_i \xrightarrow{\mathsf{T}_i} b_i$ for all $1 \leq i \leq n$,
        \item $z_i \leq b_i$ for all $1 \leq i \leq n$,
        \item $a_1 \wedge z_1 = x$,
        \item $z_i = a_{i+1} \wedge z_{i+1}$ for all $1 \leq i < n$,
        \item $z_n = y$.
    \end{enumerate}
    We may also assume (without loss of generality) that $a_i \neq b_i$ for all $1 \leq i \leq n$, since if $a_i = b_i \geq z_i$ then $a_i \wedge z_i \to z_i$ is an identity relation. With this in mind, since $\mathsf{T} \subseteq \floor{x \to y} = \{(g \cdot x) \wedge w \to w : g \in G, w \leq g \cdot y\}^\circ$ for all $\mathsf{T} \in \mathcal{T}$, (i) implies that there are sequences of elements $g_1, \dots, g_n$ and $g'_1, \dots g'_n$ of $G$ such that
    \begin{enumerate}[label=(\roman*)]
        \setcounter{enumi}{5}
        \item $a_i \leq g_i \cdot x$ for all $1 \leq i \leq n$,
        \item $b_i \leq g'_i \cdot y$ for all $1 \leq i \leq n$.
    \end{enumerate}
    We will now show that $z_i \geq x = a_i$ for all $1 \leq i \leq n$ by induction.
    \begin{description}
        \item[Base Case] From (iii) and (vi), we have
        \[x = a_1 \wedge z_1 \leq a_1 \leq g_1 \cdot x,\]
        which implies (as in previous arguments) that $x = g_1 \cdot x$, and (directly) that $z_1 \geq x$. Then the same compound inequality shows that $a_1 = x$, as desired.
        \item[Inductive Step] Suppose $z_i \geq x = a_i$ for some $1 \leq i < n$. Then from (iv) and (vi) we have
        \[x \leq z_i = a_{i+1} \wedge z_{i+1} \leq a_{i+1} \leq g_{i+1} \cdot x,\]
        so $x = g_{i+1} \cdot x$ and $z_{i+1} \geq x$. The same compound inequality now shows $a_{i+1} = x$.
    \end{description}
    Now we know in particular that $a_n = x$. From (ii), (v), and (vii), we obtain
    \[y = z_n \leq b_n \leq g'_n \cdot y,\]
    so $y = g'_n \cdot y = b_n$. Thus, $x \to y = a_n \to b_n \in \mathsf{T}_n$. We thus obtain
    \[\floor{x \to y} \subseteq \mathsf{T}_n \subseteq \floor{x \to y},\]
    and therefore $\mathsf{T}_n = \floor{x \to y}$, as desired.
\end{proof}

We are now in a position to describe the join-irreducible elements of $\mathsf{Tr}(L)$.

\begin{proposition}\label{prop:join_irr}
    Let $L$ be a $G$-lattice. Then the set of join-irreducible elements of $\Tr(L)$ is equal to the set of transfer systems generated by a single non-identity relation, i.e.,
    \[
    J(\Tr(L)) = \{ \floor{x \to y} : x < y\}.
    \]
    Moreover, there is a bijection between $J(\Tr(L))$ and the set $\frac{\{(x,y) : x<y\}}{(x,y) \sim (g \cdot x, g \cdot y)}$. In particular, the number of join-irreducible transfer systems on $L$ is equal to $\abs{\operatorname{Rel}^*(L)/\!/G}$.
\end{proposition}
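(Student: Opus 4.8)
The plan is to assemble the statement directly from the three preceding lemmas, with essentially no new content. First I would establish the set equality $J(\Tr(L)) = \{\floor{x \to y} : x < y\}$: the inclusion $\subseteq$ is precisely the content of \cref{lem:join irr implies principal}, and the reverse inclusion $\supseteq$ is precisely \cref{lem:principal implies join irr}. So this half requires no argument beyond citing those two results.

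For the bijection, I would introduce the map
\[
  \Phi \colon \{(x,y) : x < y\} \longrightarrow J(\Tr(L)), \qquad (x,y) \longmapsto \floor{x \to y},
\]
which is surjective by the set equality just proved. By \cref{lem:genconj}, for pairs $x < y$ and $x' < y'$ we have $\Phi(x,y) = \Phi(x',y')$ if and only if there is some $g \in G$ with $g \cdot x = x'$ and $g \cdot y = y'$, i.e., if and only if $(x,y)$ and $(x',y')$ lie in the same orbit of the diagonal $G$-action $g \cdot (x,y) = (g \cdot x, g \cdot y)$. Hence the fibers of $\Phi$ are exactly the $G$-orbits, so $\Phi$ descends to a bijection $\overline{\Phi} \colon \{(x,y) : x < y\}/{\sim} \xrightarrow{\ \cong\ } J(\Tr(L))$, where $\sim$ is the $G$-orbit equivalence. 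Finally, $\operatorname{Rel}^*(L)$ is by definition the set of non-trivial relations in $L$, which is exactly the set $\{(x,y) : x < y\}$ equipped with this diagonal action, and $\operatorname{Rel}^*(L)/\!/G$ is its orbit set; so $\abs{J(\Tr(L))} = \abs{\operatorname{Rel}^*(L)/\!/G}$, as claimed.

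I do not expect a genuine obstacle here: the real work has already been done in \cref{lem:genconj} (whose proof is the delicate part, repeatedly leveraging finiteness of $G$ to upgrade chains of subconjugacy relations into equalities) and in \cref{lem:principal implies join irr}. The only point deserving a moment's care is verifying that the equivalence relation written as ``$(x,y) \sim (g \cdot x, g \cdot y)$'' in the statement is indeed the orbit equivalence of the diagonal action, so that the slogan ``fibers of $\Phi$ $=$ $G$-orbits'' is the correct conclusion to draw from \cref{lem:genconj}; everything else is bookkeeping.
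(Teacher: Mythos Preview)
Your proposal is correct and matches the paper's own proof, which simply reads ``Apply \cref{lem:join irr implies principal}, \cref{lem:principal implies join irr}, and \cref{lem:genconj}.'' You have merely unpacked this citation into the explicit surjection-plus-fiber argument, which is exactly the intended content.
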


\begin{proof}
    Apply \cref{lem:join irr implies principal}, \cref{lem:principal implies join irr}, and \cref{lem:genconj}.
\end{proof}

Since cotransfer systems on $L$ are the same as transfer systems on $L^\mathrm{op}$, we immediately obtain a similar classification of join-irreducible cotransfer systems.

\begin{corollary}
    Let $L$ be a $G$-lattice. Then
    \[
    J(\coTr(L)) = \{\ceil{x \to y} : x < y\} \cong \frac{\{(x,y) : x<y\}}{(x,y) \sim (g \cdot x, g \cdot y)}.
    \]
\end{corollary}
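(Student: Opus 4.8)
The plan is to deduce the statement from \cref{prop:join_irr} applied to the opposite lattice $L^{\mathrm{op}}$, using the identification $\coTr(L) = \Tr(L^{\mathrm{op}})$ recorded in \cref{warn:trL=coTrLop}. First I would note that $L^{\mathrm{op}}$ is again a finite $G$-lattice: any lattice automorphism of $L$ is simultaneously a lattice automorphism of $L^{\mathrm{op}}$, so the $G$-action carries over verbatim, and finiteness is immediate. Hence \cref{prop:join_irr} applies to $L^{\mathrm{op}}$ and gives
\[
  J(\Tr(L^{\mathrm{op}})) = \{\floor{a \to b}^{L^{\mathrm{op}}} : a \prec b\} \cong \frac{\{(a,b) : a \prec b\}}{(a,b)\sim(g\cdot a,\, g\cdot b)},
\]
where $\prec$ denotes the strict order of $L^{\mathrm{op}}$ and $\floor{a \to b}^{L^{\mathrm{op}}}$ the transfer system it generates inside $L^{\mathrm{op}}$.

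The second step is to transport this description through the canonical lattice isomorphism $\Phi\colon \coTr(L) \xrightarrow{\sim} \Tr(L^{\mathrm{op}})$ of \cref{warn:trL=coTrLop}, which simply reverses all arrows. Being a lattice isomorphism, $\Phi$ restricts to a bijection $J(\coTr(L)) \xrightarrow{\sim} J(\Tr(L^{\mathrm{op}}))$, so it suffices to match up the distinguished generators. Reversing the single arrow $x \to y$ (with $x \leq y$ in $L$) produces the single arrow $y \to x$ in $L^{\mathrm{op}}$ (where indeed $y \preceq x$), and since $\Phi$ carries smallest-sub-structures to smallest-sub-structures it sends the smallest cotransfer system of $L$ containing $x \to y$ to the smallest transfer system of $L^{\mathrm{op}}$ containing $y \to x$; that is, $\Phi(\ceil{x \to y}) = \floor{y \to x}^{L^{\mathrm{op}}}$. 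Moreover $x < y$ in $L$ is precisely the assertion $y \prec x$ in $L^{\mathrm{op}}$, so as $(x,y)$ ranges over the nontrivial relations of $L$ the swapped pair $(y,x)$ ranges over the nontrivial relations of $L^{\mathrm{op}}$, and this coordinate swap is clearly a $G$-equivariant bijection. Feeding these identifications into the displayed description of $J(\Tr(L^{\mathrm{op}}))$ yields simultaneously $J(\coTr(L)) = \{\ceil{x \to y} : x < y\}$ and the desired bijection with $\{(x,y) : x < y\}/(x,y)\sim(g\cdot x,\, g\cdot y)$.

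The only genuine care required — and the place an error is most likely to slip in — is the bookkeeping around the passage to $L^{\mathrm{op}}$: one must be consistent about which order $\prec$ refers to, must verify that the arrow-reversal isomorphism genuinely sends $\ceil{x \to y}$ to $\floor{y \to x}^{L^{\mathrm{op}}}$ rather than to $\floor{x \to y}^{L^{\mathrm{op}}}$ under some other convention, and must confirm that $(x,y) \mapsto (y,x)$ really is a $G$-equivariant bijection on nontrivial relations. None of this is deep, but it is exactly the sort of thing that is easy to get backwards, so in a write-up I would state the correspondence $\ceil{x \to y} \leftrightarrow \floor{y \to x}^{L^{\mathrm{op}}}$ explicitly before invoking \cref{prop:join_irr}; everything else is then a purely formal consequence.
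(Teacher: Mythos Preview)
Your proposal is correct and follows exactly the approach the paper takes: the corollary is stated immediately after \cref{prop:join_irr} with no proof beyond the remark that cotransfer systems on $L$ are the same as transfer systems on $L^{\mathrm{op}}$, so the classification is obtained by applying \cref{prop:join_irr} to $L^{\mathrm{op}}$ and transporting along the arrow-reversing identification. Your write-up is considerably more careful about the bookkeeping than the paper, but the underlying argument is identical.
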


We now move to describing the meet-irreducible elements. This requires a little more work, but the general strategy is already outlined in \cite{luo2024latticeweakfactorizationsystems}. By general abstract theory, the meet-irreducible elements of $\Tr(L)$ are in bijection with the join-irreducible elements of $\Tr(L)^\mathrm{op}$. We can exploit this to give a general description of the meet-irreducible elements. We begin with a preliminary definition of lifting conditions.

\begin{definition}
    Let $\mathcal{C}$ be a category, and let $i \colon A \to B$ and $p \colon X \to Y$ be morphisms in $\mathcal{C}$. If for every $f$ and $g$ that make the square
\begin{equation}\label{eq:lift}\begin{tikzcd}
	A & X \\
	B & Y
	\arrow["g", from=1-1, to=1-2]
	\arrow["i"', from=1-1, to=2-1]
	\arrow["p", from=1-2, to=2-2]
	\arrow["{\exists\lambda}"{description}, dotted, from=2-1, to=1-2]
	\arrow["f"', from=2-1, to=2-2]
\end{tikzcd}\end{equation}
commute, there exists a lifting $\lambda$ such that the two triangles commute, we say $i$ has the \emph{left lifting property} with respect to $p$, or equivalently, $p$ has the \emph{right lifting property} with respect to $i$.

For $S$ a class of morphisms in $\mathcal{C}$ we denote by 
\[
 S^{\boxslash} \coloneq \{p : p \text{ has the right lifting property with respect to all } i \in S\}.
\]
\end{definition}

\begin{lemma}\label{lem:cotransfer_duality}
    Let $L$ be a $G$-lattice. Then there is a poset isomorphism $\coTr(L) \xrightarrow{\sim} \Tr(L)^\mathrm{op}$ which sends a cotransfer system $\mathsf{C}$ to $\mathsf{C}^\boxslash$.
\end{lemma}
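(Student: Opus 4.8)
The plan is to show that $\mathsf{C} \mapsto \mathsf{C}^{\boxslash}$ has a two-sided inverse given by the \emph{left} lifting operation $\mathsf{T} \mapsto {}^{\boxslash}\mathsf{T} \coloneq \{i : i \text{ has the left lifting property with respect to every } p \in \mathsf{T}\}$, by checking that both operations are inclusion-reversing, land in the expected lattices, and compose (in both orders) to the identity. Throughout I would treat $L$ as a thin category, so that morphisms are pairs $(x,y)$ with $x \le y$; a square as in \cref{eq:lift} with $i = (a,b)$ and $p = (X,Y)$ exists precisely when $a \le X$ and $b \le Y$, and it admits a (necessarily unique) lift precisely when, in addition, $b \le X$. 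Thus $(a,b)$ lifts against $(X,Y)$ exactly when $a \le X$ and $b \le Y$ jointly imply $b \le X$; note in particular that every identity sits on the right of every lifting problem and that lifting problems compose along the right-hand map.

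The first step is the routine verification that $\mathsf{C}^{\boxslash}$ is a transfer system when $\mathsf{C}$ is a cotransfer system, and, dually, that ${}^{\boxslash}\mathsf{T}$ is a cotransfer system when $\mathsf{T}$ is a transfer system. Closure under composition and identities and $G$-stability are immediate from the reformulation above; the transfer axiom for $\mathsf{C}^{\boxslash}$ is the observation that if $(x,y) \in \mathsf{C}^{\boxslash}$ and $x' \le y$, then any $(c,d) \in \mathsf{C}$ with $c \le x \wedge x'$ and $d \le x'$ has $c \le x$ and $d \le y$, hence $d \le x$ and so $d \le x \wedge x'$; the cotransfer axiom for ${}^{\boxslash}\mathsf{T}$ is checked symmetrically. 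One then notes that $\mathsf{C} \mapsto \mathsf{C}^{\boxslash}$ and $\mathsf{T} \mapsto {}^{\boxslash}\mathsf{T}$ reverse inclusions and that $\mathsf{C} \subseteq {}^{\boxslash}\mathsf{T} \iff \mathsf{T} \subseteq \mathsf{C}^{\boxslash}$ (both sides assert that every pair of $\mathsf{C}$ lifts against every pair of $\mathsf{T}$), so in particular $\mathsf{C} \subseteq {}^{\boxslash}(\mathsf{C}^{\boxslash})$ and $\mathsf{T} \subseteq ({}^{\boxslash}\mathsf{T})^{\boxslash}$ always hold.

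The key ingredient is a factorization lemma. For a cotransfer system $\mathsf{C}$ and $x \le y$, I would set $z \coloneq \max\{d : x \le d \le y,\ (x,d) \in \mathsf{C}\}$; this maximum exists because the set contains $x$ and is closed under binary joins, since given $(x,d_1),(x,d_2) \in \mathsf{C}$ the cotransfer axiom gives $(d_2, d_1 \vee d_2) \in \mathsf{C}$ and composition with $(x,d_2)$ gives $(x, d_1 \vee d_2) \in \mathsf{C}$. Then $(x,y)$ factors as $(x,z) \in \mathsf{C}$ followed by $(z,y)$, and $(z,y) \in \mathsf{C}^{\boxslash}$: given $(c,d) \in \mathsf{C}$ with $c \le z$ and $d \le y$, the cotransfer axiom gives $(z, z \vee d) \in \mathsf{C}$, composition with $(x,z)$ gives $(x, z \vee d) \in \mathsf{C}$, and since $z \vee d \le y$ maximality forces $z \vee d \le z$, i.e.\ $d \le z$. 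Dually, for a transfer system $\mathsf{T}$ and $x \le y$, the element $w \coloneq \min\{c : x \le c \le y,\ (c,y) \in \mathsf{T}\}$ (well-defined by a meet-closure argument using the transfer axiom together with composition) factors $(x,y)$ as $(x,w) \in {}^{\boxslash}\mathsf{T}$ followed by $(w,y) \in \mathsf{T}$.

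The final step is the usual retract argument, which collapses in a thin category because there a morphism is a retract only of itself. If $p \in {}^{\boxslash}(\mathsf{C}^{\boxslash})$, I would factor $p = (z,y) \circ (x,z)$ as above; since $p$ lifts against $(z,y) \in \mathsf{C}^{\boxslash}$, the lift in the square with top $(x,z)$, left $p$, right $(z,y)$, and bottom $\mathrm{id}_y$ is a morphism $y \to z$, forcing $z = y$ and hence $p = (x,z) \in \mathsf{C}$; combined with the previous paragraph this gives ${}^{\boxslash}(\mathsf{C}^{\boxslash}) = \mathsf{C}$. The dual computation gives $({}^{\boxslash}\mathsf{T})^{\boxslash} = \mathsf{T}$. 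Hence $(-)^{\boxslash}$ and ${}^{\boxslash}(-)$ are mutually inverse, inclusion-reversing bijections between $\coTr(L)$ and $\Tr(L)$, which is exactly the asserted isomorphism $\coTr(L) \xrightarrow{\sim} \Tr(L)^{\mathrm{op}}$. I expect the factorization lemma to be the only genuine obstacle — specifically, guessing the correct intermediate object and isolating the join/meet-closure property that makes it well-defined; everything else is formal, the one remaining subtlety being careful bookkeeping of the left/right lifting conventions and of the passage to $L^{\mathrm{op}}$.
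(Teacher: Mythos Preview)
Your proof is correct (for finite $L$, which is the operative setting of the paper) but takes a genuinely different route from the paper's. The paper does not prove the factorization lemma at all: it cites \cite[Lemma~3.1]{luo2024latticeweakfactorizationsystems} for the case of trivial $G$-action, then invokes the identifications $\Tr(L) = \Tr(L_\mathrm{triv})^G$ and $\coTr(L) = \coTr(L_\mathrm{triv})^G$ (from \cite[Lemma~9.4]{luo2024latticeweakfactorizationsystems}), and deduces the general case by observing that $(-)^{\boxslash}$ is $G$-equivariant and therefore restricts to $G$-fixed points. Your approach is strictly more self-contained: you are essentially reproving the cited Luo--Rognerud result from scratch---your factorization argument is the standard proof that (co)transfer systems are the two halves of a weak factorization system on the poset---while handling $G$-equivariance as a one-line closure check rather than via a fixed-point reduction. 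The trade-off is that the paper's argument is short and modular once one accepts the external input, whereas yours is longer but needs no outside references and makes the mechanism transparent. One small caveat: the existence of $z = \max\{d : x \le d \le y,\ (x,d) \in \mathsf{C}\}$ uses that a nonempty subset of $[x,y]$ closed under binary joins has a maximum, which requires $L$ finite (or some completeness hypothesis); this is harmless in context but worth flagging, since the lemma as stated does not make finiteness explicit.
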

\begin{proof}
    In \cite[Lemma 3.1]{luo2024latticeweakfactorizationsystems}, it is shown that this holds when $L$ has trivial $G$-action. Let $L_\mathrm{triv}$ denote the $G$-lattice which is the same as $L$ but with trivial $G$-action. Then the $G$-action on $L$ induces a $G$-action on both $\Tr(L_\mathrm{triv})$ and $\coTr(L_\mathrm{triv})$, and in \cite[Lemma 9.4]{luo2024latticeweakfactorizationsystems} it is shown\footnote{In \cite{luo2024latticeweakfactorizationsystems} this is only stated for the special case $L = \Sub(G)$, but the result holds in this greater generality with the same proof.} that $\Tr(L) = \Tr(L_\mathrm{triv})^G$. The same holds for lattices of cotransfer systems, since
    \[\coTr(L) = \Tr(L^\mathrm{op}) = \Tr((L^\mathrm{op})_\mathrm{triv})^G = \Tr((L_\mathrm{triv})^\mathrm{op})^G = \coTr(L_\mathrm{triv})^G.\]
    The isomorphism
    \[\coTr(L_\mathrm{triv}) \xrightarrow{{(-)}^\boxslash} \Tr(L_\mathrm{triv})^\mathrm{op}\]
    is $G$-equivariant, so we obtain the desired isomorphism as the composite
    \[\coTr(L) = \coTr(L_\mathrm{triv})^G \xrightarrow{{(-)}^\boxslash} (\Tr(L_\mathrm{triv})^\mathrm{op})^G = (\Tr(L_\mathrm{triv})^G)^\mathrm{op} = \Tr(L)^\mathrm{op}.\qedhere\]
\end{proof}

\begin{proposition}\label{prop:meet_irr}
    Let $L$ be a $G$-lattice. Then the set of meet-irreducible elements of $\Tr(L)$ is equal to the set of right lifts of cotransfer systems generated by a single non-identity relation, i.e.,
    \[
    M(\Tr(L)) = \{ \ceil{x \to y}^{\boxslash} : x < y\}
    \]
    Moreover, there is a canonical bijection between $M(\Tr(L))$ and $J(\Tr(L))$. In particular, the number of meet-irreducible transfer systems on $L$ is equal to $\abs{\operatorname{Rel}^*(L)/\!/G}$.
\end{proposition}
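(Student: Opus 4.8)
The plan is to deduce the statement from \cref{lem:cotransfer_duality} together with the description of $J(\coTr(L))$ recorded in the corollary immediately preceding this proposition. The key order-theoretic input is that any order isomorphism of posets preserves join-irreducible elements: this is immediate, since such an isomorphism preserves arbitrary joins and reflects membership in a subset. Applying this to the isomorphism $(-)^\boxslash \colon \coTr(L) \xrightarrow{\sim} \Tr(L)^\mathrm{op}$ of \cref{lem:cotransfer_duality}, the operation $\mathsf{C} \mapsto \mathsf{C}^\boxslash$ restricts to a bijection $J(\coTr(L)) \xrightarrow{\sim} J(\Tr(L)^\mathrm{op})$. By the definition of meet-irreducibility (noted just after \cref{thm:FTFCA2}), $J(\Tr(L)^\mathrm{op}) = M(\Tr(L))$, so we already obtain a bijection $J(\coTr(L)) \xrightarrow{\sim} M(\Tr(L))$ implemented by $(-)^\boxslash$.

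Next I would substitute the known description $J(\coTr(L)) = \{\ceil{x \to y} : x < y\}$ from the preceding corollary. Applying $(-)^\boxslash$ to both sides gives
\[
    M(\Tr(L)) = \{\ceil{x \to y}^\boxslash : x < y\},
\]
which is the first assertion.

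For the remaining statements I would chain bijections. The preceding corollary also identifies $J(\coTr(L))$ with $\frac{\{(x,y) : x < y\}}{(x,y) \sim (g \cdot x, g \cdot y)} = \operatorname{Rel}^*(L)/\!/G$; composing with the bijection $(-)^\boxslash \colon J(\coTr(L)) \xrightarrow{\sim} M(\Tr(L))$ yields $M(\Tr(L)) \cong \operatorname{Rel}^*(L)/\!/G$, and in particular $\abs{M(\Tr(L))} = \abs{\operatorname{Rel}^*(L)/\!/G}$. Since \cref{prop:join_irr} provides the analogous bijection $J(\Tr(L)) \cong \operatorname{Rel}^*(L)/\!/G$, the composite of these two is a bijection $M(\Tr(L)) \cong J(\Tr(L))$; concretely it sends $\ceil{x \to y}^\boxslash$ to $\floor{x \to y}$, and it is this description — induced by the identity on $\operatorname{Rel}^*(L)/\!/G$ — that justifies calling it canonical.

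I do not expect a serious obstacle here: the content is essentially the order-theoretic fact that isomorphisms preserve irreducibility, plus the bookkeeping above. The one place to be careful is the variance — the isomorphism of \cref{lem:cotransfer_duality} lands in $\Tr(L)^\mathrm{op}$, so join-irreducibles of $\coTr(L)$ correspond to \emph{meet}-irreducibles of $\Tr(L)$, and it would be easy to state this backwards if one is not attentive.
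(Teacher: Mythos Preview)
Your proposal is correct and follows essentially the same route as the paper: use the isomorphism $(-)^\boxslash \colon \coTr(L) \xrightarrow{\sim} \Tr(L)^\mathrm{op}$ from \cref{lem:cotransfer_duality} to identify $M(\Tr(L)) = J(\Tr(L)^\mathrm{op})$ with $J(\coTr(L)) = \{\ceil{x \to y} : x < y\}$, and then read off the canonical bijection $\floor{x \to y} \mapsto \ceil{x \to y}^\boxslash$ between $J(\Tr(L))$ and $M(\Tr(L))$ (the paper cites \cref{lem:genconj} directly for well-definedness, whereas you factor through $\operatorname{Rel}^*(L)/\!/G$ via \cref{prop:join_irr} and its corollary, which amounts to the same thing).
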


\begin{proof}
    We have established
    \[M(\Tr(L)) = J(\Tr(L)^\text{op}) \cong J(\coTr(L)) = \{\ceil{x \to y} : x < y\},\]
    where $J(\coTr(L)) \xrightarrow{\sim} J(\Tr(L)^\text{op})$ sends $\ceil{x \to y}$ to $\ceil{x \to y}^\boxslash$. The remaining claim comes from the fact that $\floor{x \to y} \mapsto \ceil{x \to y}^\boxslash : J(\Tr(L)) \to M(\Tr(L))$ is a bijection (by \cref{lem:genconj}).
\end{proof}

Now that we have a good description of the join and meet-irreducible elements of $\Tr(L)$, we need to have an explicit description of the inclusions between them so that we may build the associated reduced formal context. This is the content of the following results. We first record an explicit description of the cotransfer system generated by a single non-trivial relation, which is formally dual to the description of $\floor{x \to y}$ above.

\begin{lemma}\label{cons:rubin}
    Let $L$ be a $G$-lattice, and let $x,y \in L$ such that $x \leq y$. Then
    \[\ceil{x \to y} = \{z \to (g \cdot y) \vee z : g \in G, z \geq g \cdot x\}^\circ\]
    where again $({-})^\circ$ denotes the reflexive-transitive closure of a binary relation.
\end{lemma}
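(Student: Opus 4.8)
The plan is to deduce this lemma from Rubin's description of $\floor{x \to y}$ by passing to the opposite lattice. Recall from \cref{warn:trL=coTrLop} that a cotransfer system on $L$ is the same data as a transfer system on $L^\mathrm{op}$, the identification being implemented by reversing every arrow. Since reversal of a binary relation is an inclusion-preserving bijection, this identification is an isomorphism of posets $\coTr(L) \cong \Tr(L^\mathrm{op})$ for the refinement order; moreover $L^\mathrm{op}$ is again a $G$-lattice, since any automorphism of the lattice $L$ is simultaneously an automorphism of $L^\mathrm{op}$ (an order-isomorphism one way is an order-isomorphism the other, and it then automatically preserves the swapped meets and joins).

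First I would observe that, under this identification, the generator $x \to y$ of a cotransfer system on $L$ corresponds to the arrow $y \to x$ in $L^\mathrm{op}$, which is legitimate precisely because $x \leq y$ in $L$ is $y \leq x$ in $L^\mathrm{op}$. As the isomorphism $\coTr(L) \cong \Tr(L^\mathrm{op})$ preserves the refinement order, it carries the smallest cotransfer system on $L$ containing $x \to y$ to the smallest transfer system on $L^\mathrm{op}$ containing $y \to x$; that is, $\ceil{x \to y}$, viewed in $L^\mathrm{op}$, is exactly $\floor{y \to x}$ computed in $L^\mathrm{op}$. I would then invoke the description of $\floor{-\to-}$ from \cite[Theorem A.2]{rubin} — which, as already noted in the proof of \cref{lem:genconj}, holds for an arbitrary $G$-lattice and not merely for $\Sub(G)$ — applied in $L^\mathrm{op}$ to the generator $y \to x$, giving
\[\floor{y \to x} = \{(g \cdot y) \wedge' z \to z : g \in G,\ z \leq' g \cdot x\}^\circ,\]
where $\wedge'$, $\leq'$, and the reflexive-transitive closure are all taken in $L^\mathrm{op}$.

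Finally I would translate this formula back to $L$: the meet $\wedge'$ of $L^\mathrm{op}$ is the join $\vee$ of $L$, so $(g \cdot y) \wedge' z = (g \cdot y) \vee z$; the condition $z \leq' g \cdot x$ is the condition $g \cdot x \leq z$, i.e. $z \geq g \cdot x$; reversing every arrow turns $w \to z$ into $z \to w$; and the reflexive-transitive closure of a relation corresponds to the reflexive-transitive closure of its reversal. Substituting these identifications into the displayed formula yields
\[\ceil{x \to y} = \{z \to (g \cdot y) \vee z : g \in G,\ z \geq g \cdot x\}^\circ,\]
as claimed. The argument is pure bookkeeping, so there is no genuine obstacle; the only point requiring care is to keep the dualizations straight — that the generator passes to $y \to x$ rather than $x \to y$, that meets and joins swap, and that $\leq$ reverses — together with the (immediate) observation that the hypotheses needed for Rubin's theorem, namely finiteness of $G$ and the $G$-lattice structure, are inherited by $L^\mathrm{op}$.
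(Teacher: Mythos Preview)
Your proof is correct and is exactly the ``formally dual'' argument the paper alludes to in the sentence preceding the lemma; the paper does not spell out the details, but your dualization via $\coTr(L) \cong \Tr(L^\mathrm{op})$ and the application of Rubin's description in $L^\mathrm{op}$ is precisely what is intended.
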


\begin{theorem}\label{thm:relation}
    Let $L$ be a $G$-lattice, and let $a, b, x, y \in L$ be elements such that $a < b$ and $x < y$. Then
    \[
    \floor{a \to b} \subseteq \ceil{x \to y}^{\boxslash} \text{ if and only if, for all } g \in G,\, 
    g \cdot a \not\geq x \text{ or } g \cdot b \not\geq y \text{ or } g \cdot a \geq y.
    \]
\end{theorem}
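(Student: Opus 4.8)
The plan is to unwind the definition of the right lifting property $\boxslash$ in the poset $\coTr(L) \to \Tr(L)^{\mathrm{op}}$ and reduce the containment $\floor{a \to b} \subseteq \ceil{x \to y}^\boxslash$ to a statement about a single lifting square, then apply the explicit generator descriptions from \cref{cons:rubin} and \cite[Theorem A.2]{rubin}. First I would observe that since $\floor{a \to b}$ is \emph{generated} by the single relation $a \to b$, and $\boxslash$ produces a class closed under left lifting, $\floor{a \to b} \subseteq \ceil{x \to y}^\boxslash$ holds if and only if the single morphism $a \to b$ (viewed in $L$, i.e. the pair $(a,b)$) has the left lifting property against every relation in $\ceil{x \to y}$. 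In a poset, a lifting square with left edge $a \to b$ and right edge $z \to w$ (where $z \leq w$, $z \to w \in \ceil{x\to y}$) commutes precisely when $a \leq z$ and $b \leq w$, and a lift exists precisely when $b \leq z$; so the lifting condition becomes: for every $z \to w$ in $\ceil{x \to y}$, if $a \leq z$ then $b \leq z$.

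Next I would feed in \cref{cons:rubin}: $\ceil{x \to y} = \{z \to (g\cdot y)\vee z : g \in G,\ z \geq g\cdot x\}^\circ$. The reflexive–transitive closure only adds relations obtained by composing chains, so (after checking that the condition ``$a \leq$ source $\implies b \leq$ source'' is stable under composition — if $a \leq z_0 \to z_1 \to \cdots$, apply the hypothesis at the first step to get $b \leq z_0$) it suffices to test the generating relations $z \to (g\cdot y)\vee z$ with $z \geq g \cdot x$. So the containment is equivalent to: for all $g \in G$ and all $z \geq g\cdot x$ with $a \leq z$, we have $b \leq z$. Now I would eliminate the quantifier over $z$: the set of $z$ satisfying $z \geq g\cdot x$ and $z \geq a$ has a least element, namely $z = (g\cdot x)\vee a$, and the condition ``$b \leq z$ for all such $z$'' holds iff it holds at this least element, i.e. iff $b \leq (g\cdot x)\vee a$. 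Thus the containment is equivalent to: for all $g \in G$, $b \leq (g \cdot x)\vee a$.

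Finally I would match this with the stated form by reindexing $g \mapsto g^{-1}$ (using $G$-equivariance, $g\cdot a \geq x \iff a \geq g^{-1}\cdot x$, etc.) and contraposing. The condition ``$b \leq (g\cdot x)\vee a$ for all $g$'' fails for a given $g$ exactly when $b \not\leq (g\cdot x)\vee a$; I want to show the negation of this per-$g$ condition is equivalent to the disjunction $(g^{-1}\!\cdot a \not\geq x)$ or $(g^{-1}\!\cdot b \not\geq y)$ or $(g^{-1}\!\cdot a \geq y)$, after substituting $g \to g^{-1}$. Concretely, replacing $g$ by $g^{-1}$, the per-$g$ containment clause reads $b \leq (g^{-1}\!\cdot x)\vee a$, equivalently $g\cdot b \leq x \vee (g\cdot a)$. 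I expect the main obstacle to be exactly this last equivalence: showing that, given $a < b$ and $x < y$ in a general lattice, $g\cdot b \leq x \vee (g\cdot a)$ holds if and only if ($g\cdot a \geq x$ and $g\cdot b \geq y$ and $g\cdot a \not\geq y$) is false. The ``$\Leftarrow$'' direction of a rephrasing is routine lattice manipulation, but the ``$\Rightarrow$'' direction — extracting $g\cdot b \geq y$ from $g \cdot b \leq x\vee(g\cdot a)$ together with the failure of the three alternatives — likely requires using that $x \to y$ and $a \to b$ are \emph{relations} in the respective generated (co)transfer systems, i.e. invoking the transfer/cotransfer closure axioms (specifically, that $\ceil{x\to y}$ contains $z \to (g\cdot y)\vee z$ forces a comparison between $g\cdot b$ and $y$), rather than being purely formal; pinning down precisely which closure property is needed here is the delicate step. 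Everything else is bookkeeping with least upper bounds and the equivariance of the $G$-action.
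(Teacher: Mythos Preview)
Your overall plan (reduce $\floor{a\to b}\subseteq \ceil{x\to y}^\boxslash$ to a single lifting test, plug in the generators of $\ceil{x\to y}$, eliminate the auxiliary quantifier, then reindex) is exactly what the paper does. The gap is that you have oriented the lifting square incorrectly, and this is why your computation produces a condition not involving $y$ and leaves you with an unclosable ``delicate step''.

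By definition $S^\boxslash$ consists of morphisms having the \emph{right} lifting property against $S$. So $a\to b$ must be the right vertical of the square, and the generator $z\to (g\cdot y)\vee z$ of $\ceil{x\to y}$ (with $z\geq g\cdot x$) sits on the left. In a poset the square
\[
\begin{tikzcd}
z \ar[r]\ar[d] & a \ar[d]\\
(g\cdot y)\vee z \ar[r]\ar[ur,dashed] & b
\end{tikzcd}
\]
commutes iff $z\leq a$ and $(g\cdot y)\vee z\leq b$, and a lift exists iff $(g\cdot y)\vee z\leq a$. Since $z\leq a\leq b$ already, these simplify to: if $z\leq a$ and $g\cdot y\leq b$ then $g\cdot y\leq a$. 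Now the quantifier over $z\geq g\cdot x$ collapses exactly as you intended (there is such a $z$ with $z\leq a$ iff $g\cdot x\leq a$), giving: for all $g$, $(g\cdot x\leq a$ and $g\cdot y\leq b)\Rightarrow g\cdot y\leq a$. Reindexing $g\mapsto g^{-1}$ yields the statement of the theorem immediately; there is no residual lattice identity to prove. Your version put $a\to b$ on the left and also silently dropped the hypothesis on the target, which together erased $y$ from the picture and made the final reconciliation impossible.
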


\begin{proof}
    By consulting \cref{cons:rubin} and \eqref{eq:lift} we can describe all elements which have the right lifting property with respect to the cotransfer system $\ceil{x \to y}$. First, we note directly that an arrow $a \to b$ having the right lifting property with respect to an arrow $z \to (g \cdot y) \vee z$ means that, if $z \leq a$ and $(g \cdot y) \vee z \leq b$, then $(g \cdot y) \vee z \leq a$. Equivalently: if $z \leq a$ and $g \cdot y \leq b$ then $g \cdot y \leq a$. Thus, we have:
    \begin{align*}
    \ceil{x \to y}^{\boxslash}
    &= \bigcap_{g \in G} \{a \to b : \text{for all } z \geq g \cdot x, \text{ if } z \leq a \text{ and } g \cdot y \leq b \text{ then } g \cdot y \leq a\}\\
    &=\bigcap_{g \in G} \{a \to b : \text{for all } z \in L, g \cdot x \leq z \leq a \text{ and } g \cdot y \leq b \text{ implies } g \cdot y \leq a\}\\
    &=\bigcap_{g \in G} \{a \to b : g \cdot x \leq a \text{ and } g \cdot y \leq b \text{ implies } g \cdot y \leq a\}\\
    &= \bigcap_{g \in G} \{a \to b : g \cdot x \nleq a \text{ or } g \cdot y \nleq b \text{ or } g \cdot y \leq a\}\\
    &= \{a \to b : \text{for all } g \in G, x \nleq g \cdot a \text{ or } y \nleq g \cdot b \text{ or } y \leq g \cdot a\}.
    \end{align*}
    We then use that observation that $\floor{a \to b} \subseteq \ceil{x \to y}^\boxslash$ if and only if $a \to b \in \ceil{x \to y}^\boxslash$.
\end{proof}

\begin{summary}
    We now specialize to the case that $L = \Sub(G)$ equipped with the conjugation action. By \Cref{prop:join_irr,prop:meet_irr,thm:relation}, we obtain a reduced formal context $(X,Y,R)$ where
    \begin{align*}
    X = Y = \{(H,K) : H < K \leq G\}/\{(H,K) \sim (gHg^{-1}, gKg^{-1})\}\\
    (A,B) R (H,K) \iff \forall g \in G, gAg^{-1} \ngeq H \text{ or } gBg^{-1} \ngeq K \text{ or } gAg^{-1} \geq K
    \end{align*}
    which has the following properties:
    \begin{enumerate}
        \item The concept lattice $\conlat(X,Y,R)$ is canonically isomorphic to $\Tr(\Sub(G))$.
        \item $X \cong J(\Tr(L))$, with the bijection given by $(H,K) \mapsto \floor{H \to K}$.
        \item $Y \cong M(\Tr(L))$, with the bijection given by $(H,K) \mapsto \ceil{H \to K}^\boxslash$.
        \item $R$ is exactly the relation described in \Cref{thm:relation}.
    \end{enumerate}
\end{summary}

The upshot of this discussion is that it is possible to write down $(X,Y,R)$ as a binary matrix for large groups without needing to compute any one transfer system. We can then use software written for FCA to compute the cardinality of $\conlat(X,Y,R)$, whence the number of transfer systems for $G$ (see \cref{app:A} for more details):

\begin{example}
Let $G=S_4$. Then $(X,Y,R)$ is the following binary matrix:
\begin{center}
\begin{adjustbox}{max width=\linewidth/2}
$\begin{bmatrix}
0&1&1&1&1&1&1&1&1&1&1&1&1&1&1&1&1&1&1&1&1&1&1&1&1&1&1&1&1&1&1&1&1&1\\
1&0&1&1&1&1&1&1&1&1&1&1&1&1&1&1&1&1&1&1&1&1&1&1&1&1&1&1&1&1&1&1&1&1\\
1&1&0&1&1&1&1&1&1&1&1&1&1&1&1&1&1&1&1&1&1&1&1&1&1&1&1&1&1&1&1&1&1&1\\
0&0&1&0&1&1&1&1&1&1&1&1&1&1&1&1&1&1&1&1&1&1&1&1&1&1&1&1&1&1&1&1&1&1\\
0&1&0&1&0&1&1&1&1&1&1&1&1&1&1&1&1&1&1&1&1&1&1&1&1&1&1&1&1&1&1&1&1&1\\
1&0&0&1&1&0&1&0&1&1&1&1&1&1&1&1&1&1&1&1&1&1&1&1&1&1&1&1&1&1&1&1&1&1\\
1&1&0&1&1&1&0&1&1&1&1&1&1&1&1&1&1&1&1&1&1&1&1&1&1&1&1&1&1&1&1&1&1&1\\
1&1&0&1&1&1&1&0&1&1&1&1&1&1&1&1&1&1&1&1&1&1&1&1&1&1&1&1&1&1&1&1&1&1\\
0&0&0&0&0&0&0&0&0&0&1&1&1&1&1&1&1&1&1&1&1&1&1&1&1&1&1&1&1&1&1&1&1&1\\
0&1&0&1&0&1&0&0&1&0&1&1&1&1&1&1&1&1&1&1&1&1&1&1&1&1&1&1&1&1&1&1&1&1\\
0&0&1&0&1&1&1&1&1&1&0&1&1&1&1&1&1&1&1&1&1&1&1&1&1&1&1&1&1&1&1&1&1&1\\
0&1&0&1&0&1&1&1&1&1&1&0&1&1&1&1&1&1&1&1&1&1&1&1&1&1&1&1&1&1&1&1&1&1\\
0&0&0&0&0&0&0&0&0&0&0&0&0&0&1&1&1&1&1&1&1&1&1&1&1&1&1&1&1&1&1&1&1&1\\
0&1&0&1&0&1&0&0&1&0&1&0&1&0&1&1&1&1&1&1&1&1&1&1&1&1&1&1&1&1&1&1&1&1\\
0&1&1&0&1&1&1&1&1&1&1&1&1&1&0&1&1&1&1&1&1&1&1&1&1&1&1&1&1&1&1&1&1&1\\
1&0&0&1&1&0&1&0&1&1&1&1&1&1&1&0&1&1&1&1&1&1&1&1&1&1&1&1&1&1&1&1&1&1\\
0&0&0&0&0&0&0&0&0&0&1&1&1&1&0&0&0&1&1&1&1&1&1&1&1&1&1&1&1&1&1&1&1&1\\
0&0&0&0&0&0&0&0&0&0&0&0&0&0&1&0&0&0&1&1&1&1&1&1&1&1&1&1&1&1&1&1&1&1\\
0&1&1&1&0&1&1&1&1&1&1&1&1&1&1&1&1&1&0&1&1&1&1&1&1&1&1&1&1&1&1&1&1&1\\
1&1&1&1&1&1&0&1&1&1&1&1&1&1&1&1&1&1&1&0&1&1&1&1&1&1&1&1&1&1&1&1&1&1\\
1&1&0&1&1&1&1&0&1&1&1&1&1&1&1&1&1&1&1&1&0&1&1&1&1&1&1&1&1&1&1&1&1&1\\
0&1&0&1&0&1&0&0&1&0&1&1&1&1&1&1&1&1&0&0&0&0&1&1&1&1&1&1&1&1&1&1&1&1\\
0&1&0&1&0&1&0&0&1&0&1&1&1&1&1&1&1&1&1&1&0&1&0&1&1&1&1&1&1&1&1&1&1&1\\
1&0&0&1&1&0&1&0&1&1&1&1&1&1&1&1&1&1&1&1&0&1&1&0&1&1&1&1&1&1&1&1&1&1\\
0&0&0&0&0&0&0&0&0&0&1&1&1&1&1&1&1&1&0&0&0&0&0&0&0&1&1&1&1&1&1&1&1&1\\
1&1&0&1&1&1&0&0&1&0&1&1&1&0&1&1&1&1&1&0&0&0&1&1&1&0&1&1&1&1&1&1&1&1\\
0&0&0&0&0&0&0&0&0&0&0&1&0&0&1&1&1&1&1&0&0&0&0&0&0&0&0&1&1&1&1&1&1&1\\
0&1&0&1&0&1&1&0&1&0&1&1&1&1&1&1&1&1&0&1&0&0&1&1&1&1&1&0&1&1&1&1&1&1\\
0&0&0&0&0&0&0&0&0&0&1&1&1&1&1&1&1&1&0&1&0&0&0&0&0&1&1&0&0&1&1&1&1&1\\
0&1&1&1&0&1&0&1&1&0&1&1&1&1&1&1&1&1&0&0&1&0&0&1&1&1&1&1&1&0&1&1&1&1\\
1&0&1&1&1&0&1&1&1&1&1&1&1&1&1&1&1&1&1&1&1&1&1&0&1&1&1&1&1&1&0&1&1&1\\
0&0&1&0&0&0&0&1&0&0&1&1&1&1&1&1&1&1&0&0&1&0&0&0&0&1&1&1&1&0&0&0&1&1\\
0&0&1&0&0&0&0&1&0&0&0&1&0&1&1&1&1&1&0&0&1&0&0&0&0&1&0&1&0&0&0&0&0&1\\
0&1&1&0&0&1&0&1&0&0&1&1&1&1&0&1&0&1&0&0&1&0&0&1&0&1&1&1&1&0&1&0&1&0
\end{bmatrix}$
\end{adjustbox}
\end{center}

From this we can compute that $|\Tr(\Sub(S_4))| = |\conlat(X,Y,R)| =  8961$.
\end{example}

\begin{example}
    $|\Tr(\Sub(A_6))| = 37,799,146,070$ and $(X,Y,R)$ is a $109 \times 109$ binary matrix.
\end{example}

\begin{remark}\label{rem:trim}
    We see that for $\Tr(L)$ the set of join-irreducible and the set of meet-irreducible elements have the same cardinality. In fact, in \cite[Theorem 4.7]{luo2024latticeweakfactorizationsystems} it is proved that $\Tr(L)$ is moreover a \emph{trim} lattice in the sense of \cite{thomas2006analogue} which is a slightly stronger statement.
\end{remark}

\section{Bounds for Concept Lattices}\label{sec:complex}

In this section we will pivot and discuss various results which bound the size of $\conlat(X,Y,R)$ in terms of invariants of  $(X,Y,R)$. Our main reference for this section comes from \cite{albano2017polynomial}.

We begin with a negative result. In \cite{kuznetsov2001lattice} Kuznetsov proves that the problem of determining $|\conlat(X,Y,R)|$ from $(X,Y,R)$ is \#P-complete. In particular, if such a polynomial time algorithm existed then P=NP. Consequently we see that determining $|\Tr(\Sub(G))|$ for an arbitrary group $G$ is out of reach without significant input from group theory. Instead, we shall seek upper bounds for $|\Tr(\Sub(G))|$ that do not rely on such algorithms. While \cite{albano2017polynomial} discusses many different bounds, we will focus only on those which seem most relevant to our setting.

First we have the trivial upper bound: If $(X,Y,R)$ is some formal context then it is clear that
\[
|\conlat(X,Y,R)| \leq 2^{\min\{|X|,|Y|\}}.
\]
Our next bound comes from considering the \emph{contranomial scale}. Recall that the lattice $[1]^k$ is exactly the Boolean lattice with $k$ atoms. The following definition extracts a formal context for this lattice.

\begin{definition}
    Let $S$ be a set. The context $(S,S,\neq)$ is called a \emph{contranomial scale}. We denote by $\mathbb{N}^c(k)$ the contranomial scale with $k$ objects and $k$ attributes, that is, $\mathbb{N}^c(k) = ([k-1],[k-1], \neq)$.
\end{definition}

Say that a context $(X,Y,R)$ is \emph{$\mathbb{N}^c(k)$-free} if there does not exist a subcontext of it order isomorphic to $\mathbb{N}^c(k)$. In terms of the matrix representation of a context, this is equivalent to finding the larger $k$ such that there is no $k \times k$ submatrix which has 0 on the diagonal and 1's elsewhere (where we allow arbitrary reordering of rows and columns).

\begin{theorem}[{\cite[Theorem 8]{albano2017why}}]
    Let $(X,Y,R)$ be a finite $\mathbb{N}^c(k)$-free context. Then
    \[
    |\conlat(X,Y,R)| \leq \sum_{i=0}^{k-1} \binom{|X|}{i}.
    \]
    In particular, if $k \leq \frac{|X|}{2}$ then
    \[
     |\conlat(X,Y,R)| \leq \dfrac{k|X|^{k-1}}{(k-1)!}.
    \]
\end{theorem}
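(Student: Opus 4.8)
The plan is to recognize this as an instance of the Sauer--Shelah lemma from Vapnik--Chervonenkis theory. First I would pass from concepts to closed sets: every formal concept $(A,B)$ is determined by its extent $A$, and the family $\mathcal{C} = \{A \subseteq X : A^{\uparrow\downarrow} = A\}$ of extents is a closure system on $X$, so $\abs{\conlat(X,Y,R)} = \abs{\mathcal{C}}$. It therefore suffices to bound the number of sets in $\mathcal{C}$, and the only structural input we will extract from the $\mathbb{N}^c(k)$-freeness hypothesis is an upper bound on $\mathrm{VC}(\mathcal{C})$, the size of the largest subset of $X$ shattered by $\mathcal{C}$.

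The heart of the argument is the implication: if $\mathcal{C}$ shatters a $k$-element set $S = \{x_1,\dots,x_k\} \subseteq X$, then $(X,Y,R)$ contains a subcontext order isomorphic to $\mathbb{N}^c(k)$. Indeed, by shattering, for each $i$ there is an extent $A_i \in \mathcal{C}$ with $A_i \cap S = S \setminus \{x_i\}$. Since $x_i \notin A_i = (A_i^\uparrow)^\downarrow$, there is an attribute $y_i \in A_i^\uparrow$ with $(x_i,y_i) \notin R$; and since $x_j \in A_i$ for every $j \neq i$ while $y_i \in A_i^\uparrow$, we get $(x_j,y_i) \in R$ for all $j \neq i$. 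Hence $(x_i,y_j) \in R \iff i \neq j$. The $x_i$ are distinct by hypothesis, and the $y_i$ are distinct because $(x_{i'},y_i) \in R$ for $i' \neq i$ forces $y_i \neq y_{i'}$. Thus the $k \times k$ submatrix on rows $x_1,\dots,x_k$ and columns $y_1,\dots,y_k$ has $0$ on the diagonal and $1$ off it, i.e.\ it realizes $\mathbb{N}^c(k)$. Taking contrapositives, an $\mathbb{N}^c(k)$-free context satisfies $\mathrm{VC}(\mathcal{C}) \leq k-1$.

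Now apply the Sauer--Shelah lemma to $\mathcal{C} \subseteq \mathcal{P}(X)$ with $\mathrm{VC}(\mathcal{C}) \leq k-1$: this yields $\abs{\mathcal{C}} \leq \sum_{i=0}^{k-1}\binom{\abs{X}}{i}$, which is the first inequality. For the ``in particular'' clause, when $k \leq \abs{X}/2$ every index $i \leq k-1$ satisfies $i < \abs{X}/2$, so by unimodality of binomial coefficients $\binom{\abs{X}}{i} \leq \binom{\abs{X}}{k-1}$; summing the $k$ terms and using the crude bound $\binom{\abs{X}}{k-1} \leq \abs{X}^{k-1}/(k-1)!$ gives $\abs{\conlat(X,Y,R)} \leq k\abs{X}^{k-1}/(k-1)!$.

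I expect the main obstacle to be the combinatorial translation in the second paragraph, and in particular the verification that the witnessing attributes $y_1,\dots,y_k$ can be chosen pairwise distinct, so that one genuinely extracts a non-degenerate $k \times k$ contranomial subcontext rather than a smaller one; the rest is either the standard statement of Sauer--Shelah or an elementary binomial estimate.
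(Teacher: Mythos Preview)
Your argument is correct. Note, however, that the paper does not supply its own proof of this statement: it is quoted verbatim as \cite[Theorem 8]{albano2017why} and used as a black box. There is therefore nothing in the present paper to compare your proof against.

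That said, your route via the Sauer--Shelah lemma is exactly the standard one in the FCA literature (and is essentially what the cited source does): one identifies the VC dimension of the extent system with the largest $k$ for which an $\mathbb{N}^c(k)$ subcontext exists, and then Sauer--Shelah delivers the binomial sum bound. Your verification that the witnessing attributes $y_1,\dots,y_k$ are pairwise distinct is clean and correct, and the crude binomial estimate for the ``in particular'' clause is fine.
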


In general, determining the $\mathbb{N}^c(k)$-freeness of a context is an NP-hard problem; in practice it is usually best to use an integer linear programming solver or similar tool. Our interest in this bound, however, comes from the relation to a recently introduced invariant of groups in the setting of homotopical combinatorics in \cite{basis_paper}. Explicitly, any transfer system for $G$ has a well-defined number assigned to it given by the size of a \emph{minimal basis} for that transfer system. The \emph{complexity} of a group, $\mathfrak{c}(G)$, is then largest among these basis sizes. This invariant is exactly the \emph{breadth} of the lattice $\Tr(\Sub(G))$ in the sense of Birkhoff \cite{birkhoff1973lattice}, that is, the maximum number of elements in an irredundant join representation of an element. Using \cite[Corollary 5.3.2]{albano2017polynomial} we obtain the following:

\begin{theorem}\label{thm:complexity}
    Let $G$ be a finite group. Then the following numbers are equal:
    \begin{enumerate}
        \item The complexity of $G$.
        \item The breadth of $\Tr(\Sub(G))$.
        \item The largest $k$ such that the associated context $(X,Y,R)$ is not $\mathbb{N}^c(k)$-free.
    \end{enumerate}
    Hence if $\mathfrak{c}(G) = k$ then
    \[
    |\Tr(\Sub(G))| \leq \sum_{i=0}^{k-1} \binom{|\operatorname{Rel}^*(\Sub(G))/\!/G|}{i}.
    \]
\end{theorem}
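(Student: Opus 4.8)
The plan is to prove the three asserted numbers equal by chaining together results that are already available — \cite{basis_paper} on the homotopical side and \cite{albano2017polynomial} on the FCA side — and then to read off the displayed inequality as a formal consequence. The mathematical content is slight; most of the care goes into aligning the (several) indexing conventions in play.

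For (1) $\Leftrightarrow$ (2): I would first recall that by \cref{prop:join_irr} the join-irreducible elements of $\Tr(\Sub(G))$ are exactly the principal transfer systems $\floor{H \to K}$, and that a \emph{minimal basis} of a transfer system $\mathsf{T}$ in the sense of \cite{basis_paper} is — on unwinding the definitions there — nothing other than an irredundant expression of $\mathsf{T}$ as a join of such principal systems, i.e.\ an irredundant join-representation of $\mathsf{T}$ by join-irreducibles. Maximising the size of such a representation over all $\mathsf{T} \in \Tr(\Sub(G))$ is precisely Birkhoff's notion of breadth, so (1) $=$ (2). Since this identification is already asserted in \cite{basis_paper}, in practice this step reduces to a citation.

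For (2) $\Leftrightarrow$ (3): here I would invoke the FCA characterisation of breadth via contranomial scales, namely \cite[Corollary 5.3.2]{albano2017polynomial}, which says that for a finite lattice $L$ the breadth of $L$ equals the largest $k$ for which the reduced context $(J(L), M(L), {\leq})$ contains $\mathbb{N}^c(k)$ as a subcontext. By the Summary following \cref{thm:relation} — which rests on \cref{thm:FTFCA2} together with \cref{prop:join_irr,prop:meet_irr} — the reduced context of $\Tr(\Sub(G))$ is exactly $(X,Y,R)$; hence the breadth of $\Tr(\Sub(G))$ is the largest $k$ with $(X,Y,R)$ not $\mathbb{N}^c(k)$-free, which is (3). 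Finally, for the displayed bound, set $k = \mathfrak{c}(G)$; then $(X,Y,R)$ is $\mathbb{N}^c(k+1)$-free, and feeding this into the contranomial-scale estimate (the bound of \cite[Theorem 8]{albano2017why}, in the refined form of \cite[Corollary 5.3.2]{albano2017polynomial}) together with $|X| = |\operatorname{Rel}^*(\Sub(G))/\!/G|$ from \cref{prop:join_irr} and $|\Tr(\Sub(G))| = |\conlat(X,Y,R)|$ from \cref{thm:FTFCA2} produces the claimed inequality.

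The \textbf{main obstacle} is not a theorem but bookkeeping, and I expect it to absorb most of the effort in the first step: verifying that ``minimal basis'' really is ``irredundant join-representation by join-irreducibles'' (in particular that a basis may always be taken to consist of principal generators, which uses \cref{prop:join_irr}), and reconciling the off-by-one conventions baked into $\mathbb{N}^c(k) = ([k-1],[k-1],\neq)$, into Birkhoff breadth, and into the definition of complexity, so that the summation range in the final bound comes out as $\sum_{i=0}^{k-1}$ rather than $\sum_{i=0}^{k}$ or $\sum_{i=0}^{k+1}$.
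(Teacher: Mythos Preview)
Your proposal is correct and matches the paper's own argument essentially verbatim: the paper does not give a self-contained proof but simply asserts (in the paragraph preceding the theorem) that complexity equals breadth via \cite{basis_paper}, then invokes \cite[Corollary~5.3.2]{albano2017polynomial} for the breadth--contranomial equivalence, with the displayed bound read off from \cite[Theorem~8]{albano2017why}. Your additional remarks about unwinding ``minimal basis'' via \cref{prop:join_irr} and tracking the off-by-one conventions are exactly the bookkeeping the paper leaves implicit.
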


In \cite{basis_paper} the complexity is computed for several families of groups, but in general, this seems to be a very difficult problem. As such, for the rest of this section, and paper, we will focus on a far more tangible and computable upper bound for the number of concepts arising from a formal context.

\begin{definition}
    Let $L$ be a finite lattice with at least two elements. Denote by $\delta(L)$ the \emph{density} of the reduced formal context $(J(L), M(L), \leq)$. That is,
    \[\delta(L) = \frac{\lvert \{(A,B) \in J(L) \times M(L) : A \leq B\} \rvert}{\lvert J(L) \rvert \lvert M(L) \rvert}.\]
    Denote by $\rho(L)$ the \emph{codensity} of the reduced formal context $(J(L), M(L), \leq)$. That is,
     \[\rho(L) = \frac{\lvert \{(A,B) \in J(L) \times M(L) : A \not\leq B\} \rvert}{\lvert J(L) \rvert \lvert M(L) \rvert} = 1 - \delta(L).\]
\end{definition}

In other words, $\delta(L)$ is the proportion of entries in the context matrix which are $1$'s while $\rho(L)$ is the proportion of entries which are 0's.

\begin{proposition}[\cite{schuett1987abschaetzungen}]\label{thm:Ibound}
Let $L$ be a finite lattice with associated reduced formal context $(J(L), M(L), \leq)$. Then
\[
|L| \leq \dfrac{3}{2} \left( 2^{\sqrt{\delta(L) \cdot |J(L)| \cdot |M(L)| + 1}} \right) - 1
\]
We note that the factor $\delta(L) \cdot |J(L)| \cdot |M(L)|$ is simply the number of 1's in the matrix representation of the reduced context. 
\end{proposition}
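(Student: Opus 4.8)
The plan is to leverage the explicit description of the concept lattice from \cref{thm:FTFCA2}: every element $z \in L$ is determined by its extent $\{x \in J(L) : x \leq z\}$, so $|L|$ is bounded by the number of extents, hence by the number of subsets of $J(L)$ which arise as extents of concepts. The key observation is that a subset $A \subseteq J(L)$ is an extent precisely when $A = A^{\uparrow\downarrow}$, and $A^\uparrow \subseteq M(L)$ records those meet-irreducibles lying above every element of $A$. The strategy is therefore a counting argument on the context matrix: bound the number of "closed" row-sets in terms of the total number of incident pairs $N := \delta(L)\cdot|J(L)|\cdot|M(L)|$, i.e. the number of $1$'s in the matrix.

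First I would set $N$ to be the number of $1$'s and argue that each concept $(A,B)$ is determined by the pair $(A,B)$ with $A \subseteq J(L)$, $B \subseteq M(L)$, and that the submatrix indexed by $A \times B$ is entirely $1$'s (this is immediate from the definition of the arrow operators and the fact that $A^\uparrow = B$, $B^\downarrow = A$). Thus every concept gives an all-ones rectangle $A \times B$ inside the matrix, and since $A$ and $B$ each determine the other, the concept is recovered from $A$ alone (or from $B$ alone). Next I would split concepts into two classes according to the size of the extent: those with $|A| \leq \sqrt{N}$ and those with $|A| > \sqrt{N}$. For the first class, one counts subsets of $J(L)$ of bounded size; for the second, one dualizes — a large all-ones rectangle with many rows must have few columns (since the total number of $1$'s is $N$, an all-ones rectangle $A\times B$ satisfies $|A|\cdot|B|\leq N$), so $|B| < \sqrt{N}$ and one instead counts subsets of $M(L)$ of size at most $\sqrt{N}$. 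Combining, $|L|$ is at most roughly $2 \cdot 2^{\sqrt{N}}$, and a more careful bookkeeping of the boundary case $|A| = |B| = \sqrt{N}$ and the two trivial concepts (top and bottom) tightens the constant to $\tfrac{3}{2}$ with the $-1$ correction; this is exactly Schütt's argument and I would cite \cite{schuett1987abschaetzungen} for the precise constant optimization.

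The main obstacle is getting the constant $\tfrac{3}{2}$ (rather than merely $O(2^{\sqrt{N}})$) right: one must carefully handle the overlap between the two classes of concepts — concepts counted in \emph{both} the small-extent and small-intent tallies — and the off-by-one coming from the two improper concepts $(\varnothing, M(L))$ and $(J(L), \varnothing)$, together with the "$+1$" inside the radical which accounts for the fact that the extreme rectangle can be slightly larger than $\sqrt{N}\times\sqrt{N}$. Since the statement is quoted verbatim from \cite{schuett1987abschaetzungen}, the honest approach is to reduce to that reference after establishing the structural dictionary (each concept $\leftrightarrow$ maximal all-ones rectangle, $|A|\cdot|B| \leq N$), which is the only part that genuinely uses \cref{thm:FTFCA2}; the remainder is the elementary but delicate extremal counting that Schütt carried out, and I would not reproduce it in full.
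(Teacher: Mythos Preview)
The paper does not supply its own proof of this proposition; it simply records the bound and cites Sch\"utt \cite{schuett1987abschaetzungen}. So there is no in-paper argument to compare yours against, and your instinct to ultimately defer to that reference matches what the authors do.

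Your structural setup is fine: concepts correspond to maximal all-ones rectangles, and any such rectangle $(A,B)$ with both sides nonempty satisfies $|A|\cdot|B| \leq N$. But the outline you give of the extremal step has a genuine gap. You split concepts according to whether $|A| \leq \sqrt{N}$ or $|A| > \sqrt{N}$ (hence $|B| < \sqrt{N}$), and then assert that each class contributes roughly $2^{\sqrt{N}}$ concepts by ``counting subsets of $J(L)$ (resp.\ $M(L)$) of bounded size''. This does not follow: the number of subsets of $J(L)$ of size at most $\sqrt{N}$ is $\sum_{k \leq \sqrt{N}} \binom{|J(L)|}{k}$, which can be vastly larger than $2^{\sqrt{N}}$ when $|J(L)|$ is large. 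For a concrete failure, take the $k \times k$ identity context: here $N = k$ and $|J(L)| = k$, so your small-extent count is $\sum_{i \leq \sqrt{k}} \binom{k}{i}$, which for $k = 100$ exceeds $10^{13}$, while the Sch\"utt bound is under $1600$ and the actual concept count is $102$. Bounding the \emph{cardinality} of $A$ does not confine $A$ to a small \emph{ambient} set, and it is the size of the ambient set that governs the number of possible extents. The inequality $|A|\cdot|B| \leq N$ is a necessary ingredient, but by itself it is not the mechanism that produces the $2^{\sqrt{N}}$ bound; some further idea is needed to force $A$ (or $B$) to lie inside a fixed set of size $O(\sqrt{N})$, and that is precisely the content one must take from \cite{schuett1987abschaetzungen} rather than sketch incorrectly.
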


Given this bound, we will now proceed in the rest of the paper to explicitly compute $\delta(\Tr(\Sub(G)))$ for many families of abelian groups, which then provides a bound for $|\Tr(\Sub(G))|$.

\begin{remark}
    In the usual FCA literature where the methods are applied to real-world data applications, one notices that the densities of the contexts are relatively low. We shall see the opposite here, in that the contexts arising from transfer system lattices more often than not have density close to 1.
\end{remark}

\section{The Reduced Context for Cyclic Groups of Prime Power Order}\label{sec:tamari}

In this section we will discuss the formal context matrix for $\Tr(\Sub(C_{p^n}))$ where $p$ is prime. Given the results of \cite{bbr}, this is akin to forming the reduced formal context for the $(n+1)$-Tamari lattice, something that as-of-yet seems to be unexplored in the FCA literature. This calculation will serve as a warm-up for \cref{sec:cyclic} where we compute the (co)density of $\Tr(\Sub(G))$ for arbitrary cyclic groups. Throughout this section we will identify $\Sub(C_{p^n})$ with the total order $[n]$.

We begin with a simple observation regarding the number of join and meet-irreducible elements.

\begin{lemma}\label{lem:nummeettamari}
    Let $n \geq 1$ and $p$ prime. Then 
    \[
    |J(\Tr([n]))| = \binom{n+1}{2} = |M(\Tr([n]))|.
    \]
\end{lemma}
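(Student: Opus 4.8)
The plan is to simply combine the structural results of \Cref{sec:joinmeet} with an elementary count. First I would invoke \Cref{prop:join_irr}: for any $G$-lattice $L$, there is a bijection $J(\Tr(L)) \cong \operatorname{Rel}^*(L)/\!/G$, so in particular $|J(\Tr(L))| = |\operatorname{Rel}^*(L)/\!/G|$. Dually, \Cref{prop:meet_irr} gives $|M(\Tr(L))| = |\operatorname{Rel}^*(L)/\!/G|$ as well, which already accounts for the equality $|J(\Tr([n]))| = |M(\Tr([n]))|$ for free, regardless of the precise value.

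Next I would specialize to $L = \Sub(C_{p^n})$. Since $C_{p^n}$ is abelian, conjugation is trivial, so the $G$-action on the subgroup lattice is trivial and forming $G$-orbits does nothing: $\operatorname{Rel}^*(\Sub(C_{p^n}))/\!/G = \operatorname{Rel}^*(\Sub(C_{p^n}))$. Identifying $\Sub(C_{p^n})$ with the chain $[n] = \{0 < 1 < \dots < n\}$ as announced at the start of the section, it remains to count $|\operatorname{Rel}^*([n])|$.

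Finally, a nontrivial relation in the chain $[n]$ is exactly an ordered pair $(x,y)$ with $x < y$, i.e., an unordered two-element subset of the $(n+1)$-element set $\{0,1,\dots,n\}$. There are $\binom{n+1}{2}$ of these, giving $|J(\Tr([n]))| = |M(\Tr([n]))| = \binom{n+1}{2}$.

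There is essentially no obstacle here — the only point requiring care is the observation that the conjugation action is trivial for abelian $G$, so that the orbit count reduces to a count of pairs in a chain; everything else is a direct citation of \Cref{prop:join_irr} and \Cref{prop:meet_irr}.
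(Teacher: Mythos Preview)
Your proposal is correct and follows exactly the paper's approach: the paper's one-line proof simply states that join and meet-irreducible elements of $\Tr([n])$ correspond bijectively with pairs $(x,y)$ such that $0 \leq x < y \leq n$, which is precisely your argument via \Cref{prop:join_irr} and \Cref{prop:meet_irr} together with the triviality of the conjugation action. You have just made the citations and the orbit-triviality step explicit.
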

\begin{proof}
    Join and meet-irreducible elements of $\Tr([n])$ correspond bijectively with pairs $(x,y)$ such that $0 \leq x < y \leq n$.
\end{proof}

\begin{theorem}\label{thm: density of tamari}
    Let $n \geq 1$ and $p$ prime. Then
    \[
    \rho(\Tr([n])) =\dfrac{\binom{n+3}{4}}{\binom{n+1}{2}^2} = \frac{(n+2)(n+3)}{6n(n+1)}.
    \]
    In particular,
    \[
    \lim_{n \to \infty} \rho(\Tr([n])) = \dfrac{1}{6}.
    \]
\end{theorem}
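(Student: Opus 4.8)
The plan is to count directly the number of $0$ entries in the reduced formal context of $\Tr([n])$ using the explicit relation from \Cref{thm:relation}, and then divide by $|J(\Tr([n]))|\cdot|M(\Tr([n]))| = \binom{n+1}{2}^2$, which is provided by \Cref{lem:nummeettamari}.

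First I would note that, since $C_{p^n}$ is abelian, the conjugation action on $\Sub(C_{p^n}) \cong [n]$ is trivial, so the quantifier ``for all $g \in G$'' in \Cref{thm:relation} reduces to a single clause. Parametrising the join-irreducibles as $\floor{a \to b}$ (with $a < b$ in $[n]$) and the meet-irreducibles as $\ceil{x \to y}^\boxslash$ (with $x < y$ in $[n]$) via \Cref{prop:join_irr,prop:meet_irr}, \Cref{thm:relation} becomes
\[\floor{a \to b} \subseteq \ceil{x \to y}^\boxslash \iff a \not\geq x \ \text{ or }\ b \not\geq y \ \text{ or }\ a \geq y.\]
Negating this in the total order $[n]$, the pair $\bigl((a,b),(x,y)\bigr)$ contributes a $0$ to the context exactly when $x \leq a < y \leq b$ (the side conditions $a < b$ and $x < y$ being then automatic).

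Next I would enumerate the tuples $0 \leq x \leq a < y \leq b \leq n$. The quickest route is the bijection $(x,a,y,b) \mapsto (x,\,a+1,\,y+1,\,b+2)$, which identifies them with the strictly increasing $4$-tuples drawn from $\{0,1,\dots,n+2\}$; hence there are $\binom{n+3}{4}$ of them. (One could equally evaluate $\sum_{0 \le a < y \le n}(a+1)(n-y+1)$ by a hockey-stick computation.) Therefore
\[\rho(\Tr([n])) = \frac{\binom{n+3}{4}}{\binom{n+1}{2}^2},\]
and substituting $\binom{n+3}{4} = \tfrac{1}{24}n(n+1)(n+2)(n+3)$ and $\binom{n+1}{2}^2 = \tfrac14 n^2(n+1)^2$ and cancelling gives $\tfrac{(n+2)(n+3)}{6n(n+1)}$; the limit $\tfrac16$ is then immediate, being the ratio of the leading coefficients of two quadratics.

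I do not anticipate any genuine obstacle: the whole argument is elementary once \Cref{thm:relation} is in hand. The only points requiring care are getting the direction of the inequalities right when negating the relation, and recording explicitly that the $G$-action is trivial here — which is precisely what makes the $0$-pattern of the context so simple to describe.
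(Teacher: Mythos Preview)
Your proposal is correct and follows essentially the same approach as the paper: invoke \Cref{thm:relation} with trivial $G$-action to reduce the $0$-count to tuples $x \le a < y \le b$ in $[n]$, show there are $\binom{n+3}{4}$ of them, and divide by $\binom{n+1}{2}^2$ from \Cref{lem:nummeettamari}. Your shift bijection $(x,a,y,b)\mapsto(x,a+1,y+1,b+2)$ onto strictly increasing $4$-tuples in $\{0,\dots,n+2\}$ is in fact a bit cleaner than the paper's bijection with $4$-element subsets of $\{0,\dots,n\}\amalg\{*_1,*_2\}$, but the two arguments are otherwise identical.
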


\begin{proof}
    We need to compute the number of 0's of the context matrix. To do so, we appeal to \cref{thm:relation} which, using the fact that $[n]$ is totally ordered, asks us to consider tuples $0 \leq k,h,a, b \leq n$ such that
    \[
    k \leq a < h \leq b.
    \]
    The number of such tuples is given by $\binom{n+3}{4}$. Indeed, let $X_0 = \{(k,a,h,b) \in \{0, \dots, n\}^4 : k \leq a < h \leq b\}$, and let $Y_0$ be the set of $4$-element subsets of $\{0, \dots, n\} \amalg \{*_1, *_2\}$. We define a function $\Phi : Y_0 \to X_0$ as follows:
    \begin{align*}
        \Phi(\{a,b,c,d\}) &= (a,b,c,d) && 0 \leq a < b < c < d \leq n \\
        \Phi(\{a,b,c\}\amalg\{*_1\}) &= (a,a,b,c) && 0 \leq a < b < c \leq n \\
        \Phi(\{a,b,c\}\amalg\{*_2\}) &= (a,b,c,c) && 0 \leq a < b < c \leq n \\
        \Phi(\{a,b\}\amalg\{*_1,*_2\}) &= (a,a,b,b) && 0 \leq a < b \leq n
    \end{align*}
    It is straightforward to check that $\Phi$ is bijective.

    The claimed codensity result then follows in combination with \cref{lem:nummeettamari}.
\end{proof}

\begin{remark}
By plotting the context matrix with black and white pixels as discussed in \cref{ex:exampleFCA}, one observes a highly structured situation (\cref{fig:C_p^21 context}). This particular fractal structure should not be surprising given the relation of $\Tr([n])$ to the $(n+1)$-Tamari lattice, and the iterative constructions that exist of the latter.
    \begin{figure}[h!]
    \centering
    \includegraphics[width=0.3\linewidth,cframe=nice-pink 2pt]{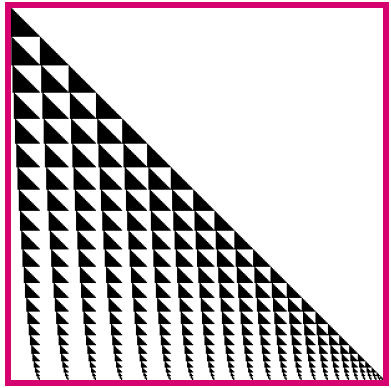}
    \caption{The reduced concept matrix for $\Tr(\Sub(C_{p^{21}})) \cong\mathrm{Tamari}({22})$.}
    \label{fig:C_p^21 context}
\end{figure}
\end{remark}

\section{The Reduced Context for Arbitrary Cyclic Groups}\label{sec:cyclic}

Now that we have addressed the situation for cyclic groups of prime power order, we will generalize this to all cyclic groups, beginning in \cref{subsec:twoprime} with the case of two prime factors to illustrate the main idea. For any positive integer $N$ with prime factorization $p_1^{n_1} \dots p_k^{n_k}$, we recall that there is an isomorphism
\[\Sub(C_N) \cong \Sub(C_{p_1^{n_1}}) \times \dots \times \Sub(C_{p_k^{n_k}}) \cong [n_1] \times \dots \times [n_k].\]

\subsection{Cyclic Groups with Two Prime Factors}\label{subsec:twoprime}

\begin{proposition}
    \label{prop:[n] x [m] codensity}
    Let $n,m \geq 1$ and $p \neq q$. Then
    \[\rho(\Tr(\Sub(C_{p^n q^m})) = \frac{(m+2) (m+3) (n+2) (n+3) (3 m n+4 m+4 n)}{36 (m+1) (n+1) (2m + 2n + m n)^2}.\]
\end{proposition}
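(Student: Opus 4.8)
The plan is to specialize the general codensity formula for $\Tr([n_1] \times \dots \times [n_k])$ (the first displayed formula in the third lettered theorem, which we may assume) to the case $k = 2$, with $n_1 = n$, $n_2 = m$. Thus I would first establish the general $k$-fold formula, whose proof itself would be a direct generalization of \cref{thm: density of tamari}: by \cref{thm:relation}, since $[n_1] \times \dots \times [n_k]$ is a product of chains, an ordered pair of nontrivial relations $((a_i)_i \to (b_i)_i)$ and $((x_i)_i \to (y_i)_i)$ fails to lie in the relation $R$ precisely when, coordinatewise, $x_i \le a_i < y_i \le b_i$ for all $i$ (the conjugation action is trivial here, so the ``for all $g$'' collapses to a single condition). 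Counting such pairs factors over coordinates, but one must subtract the degenerate contributions where some relation becomes an identity; this accounts for the $\prod(z_i - x_i + 1) - \prod(z_i - y_i + 1)$ shape in the numerator, where I reparametrize by $x_i \le y_i \le z_i$ with $x_i = \min(a_i, x_i)$, $y_i$ the ``cut'' value, $z_i = \max(b_i, b_i)$ — more precisely $x_i$ ranges over the common lower bound, $y_i$ over $y_i$, $z_i$ over the common upper bound, exactly as in the bijection $\Phi$ of \cref{thm: density of tamari} done in each coordinate. The denominator is $|J \cdot| \cdot |M|$ where by \cref{prop:join_irr,prop:meet_irr} each factor equals $|\operatorname{Rel}^*([n_1]\times\dots\times[n_k])|$, i.e.\ the number of strict pairs, which is $\sum_{0 \le x_i \le n_i} (\prod(n_i - x_i + 1) - 1)$.

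With the general formula in hand, the remaining work is purely a summation evaluation at $k = 2$. I would compute the denominator first: $\sum_{0 \le x \le n, 0 \le y \le m}\big((n-x+1)(m-y+1) - 1\big) = \binom{n+2}{2}\binom{m+2}{2} - (n+1)(m+1) = (n+1)(m+1)\big(\tfrac{(n+2)(m+2)}{4} - 1\big) = \tfrac{(n+1)(m+1)(2m+2n+mn)}{4}$, matching the $(2m+2n+mn)^2$ and $36 = 6^2$-adjacent denominator after squaring. For the numerator I would evaluate $\sum_{0 \le x_1 \le y_1 \le z_1 \le n}\sum_{0 \le x_2 \le y_2 \le z_2 \le m}\big((z_1 - x_1 + 1)(z_2 - x_2 + 1) - (z_1 - y_1 + 1)(z_2 - y_2 + 1)\big)$, splitting as a difference of two double products. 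Each piece factors; for instance $\sum_{x \le y \le z \le n}(z - x + 1) = \sum_{0 \le d \le n}(\text{number of }(x,y,z)\text{ with }z - x = d)\cdot(d+1) = \sum_{d=0}^n (n - d + 1) d (d+1)/\text{(appropriate count of }y)$ — concretely $\sum_{d} (n+1-d)(d+1)$ choices with $y$ free in a chain of length $d+1$, giving a closed polynomial in $n$. One then assembles the two factored triple sums, subtracts, divides by the squared denominator, and simplifies; the answer $\tfrac{(m+2)(m+3)(n+2)(n+3)(3mn+4m+4n)}{36(m+1)(n+1)(2m+2n+mn)^2}$ should drop out after factoring a common $(n+2)(n+3)(m+2)(m+3)$ from the numerator.

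The only genuine obstacle is bookkeeping: the numerator is a difference of two quantities that individually are degree-$6$ polynomials in $(n,m)$ (each a product of two degree-$3$ univariate sums), and one must verify the difference factors cleanly with the claimed $(3mn+4m+4n)$ cofactor. I would organize this by computing the univariate building blocks $A(n) := \sum_{0\le x\le y\le z\le n}(z-x+1)$ and $B(n) := \sum_{0 \le x \le y \le z \le n} 1 = \binom{n+3}{3}$ once, noting $A(n) = \binom{n+3}{3} + \binom{n+3}{4}$ or a similar hypergeometric identity, so that the numerator becomes $A(n)A(m) - \big(A(n) - C(n)\big)\big(A(m) - C(m)\big)$ for a correction term $C$ coming from the shift $y \mapsto z - y$ versus $x \mapsto z - x$ — in fact by the symmetry $x \leftrightarrow z - y$ one sees the two triple sums with $(z-x+1)$ and $(z-y+1)$ need not be equal, so I would instead write the $(z-y+1)$ sum directly as $\sum_{0 \le u \le v \le w}(w - v + 1)$ with $u = x, v = y, w = z$ reindexed, and observe $\sum(z - y + 1)$ over the simplex equals $\sum(x' + 1)$ over $0 \le x' \le \dots$, a genuinely smaller quantity. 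Carefully separating these and then doing the final polynomial factorization (which I would double-check with a computer algebra system, as the paper's appendix suggests is available) is the crux; everything else is mechanical.
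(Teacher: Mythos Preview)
Your computational plan---take the general $k$-variable formula, specialize to $k=2$, and observe that the numerator splits as $A(n)A(m)-B(n)B(m)$ with $A(n)=\sum_{0\le x\le y\le z\le n}(z-x+1)$ and $B(n)=\sum_{0\le x\le y\le z\le n}(z-y+1)$---is correct and in fact cleaner than what the paper does. The paper does not go through the general formula at all here (\Cref{thm: codensity arbitrary cyclic} comes \emph{after} \Cref{prop:[n] x [m] codensity} and is motivated by it); instead it counts the zeros directly by choosing $K$, then $H\ge K$, then $B\ge H$, then $A$ in $[K,B]\setminus[H,B]$, and evaluates the resulting six-fold nested sum by eliminating one variable at a time. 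Your factorization avoids that grind: one finds $A(n)=\tfrac{(n+1)(n+2)^2(n+3)}{12}$ and $B(n)=\binom{n+4}{4}$, and then $A(n)A(m)-B(n)B(m)$ has the visible common factor $\tfrac{(n+1)(n+2)(n+3)(m+1)(m+2)(m+3)}{576}$ with residual $4(n+2)(m+2)-(n+4)(m+4)=3mn+4m+4n$. Your denominator computation matches the paper's exactly.

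There is, however, a genuine error in your sketch of \emph{why} the general formula holds. You assert that a pair of relations gives a zero ``precisely when, coordinatewise, $x_i\le a_i<y_i\le b_i$ for all $i$''. This is false for $k\ge 2$: \Cref{thm:relation} with trivial $G$-action says the pair fails exactly when $a\ge x$, $b\ge y$, and $a\not\ge y$, and in a product of chains the last condition means $a_i<y_i$ for \emph{some} $i$, not all $i$. (Your version would give $\prod_i\binom{n_i+3}{4}$ zeros, which for $n=m=1$ is $1$, not the correct $11$.) Consequently the count does not factor over coordinates in the way you suggest, and the term $-\prod_i(z_i-y_i+1)$ is not ``subtracting degenerate identity relations''. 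The correct reading is: with $(x_i,y_i,z_i)$ recording the coordinates of $(K,H,B)$, the outer sum ranges over triples $K\le H\le B$ and the difference $\prod_i(z_i-x_i+1)-\prod_i(z_i-y_i+1)=\lvert[K,B]\rvert-\lvert[H,B]\rvert$ counts the admissible choices of $A$ with $K\le A\le B$ but $A\not\ge H$. Once you replace your justification with this one, the derivation of the general formula goes through and your $k=2$ evaluation is fine. (Incidentally, your tentative identity $A(n)=\binom{n+3}{3}+\binom{n+3}{4}$ is also off---that sum is $\binom{n+4}{4}=B(n)$, not $A(n)$---but you rightly abandon it a sentence later.)
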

\begin{proof}
By \cref{thm:relation} we have a $0$ in the reduced formal context for $\Tr([n] \times [m])$ for each quadruple $(A,B,K,H)$ of points in $[n] \times [m]$ such that $A < B$, $K < H$, $A \geq K$, $B \geq H$, and $A \ngeq H$. We will count the number of such quadruples by the following procedure, where we refer to \Cref{fig:illustration for [n] x [m]}:
    \begin{description}
        \item[Step 1] Choose $K$ to be any point $(i,j) \in [n] \times [m]$.
        \item[Step 2] Choose $H$ to be any point $(s,t) \in [n] \times [m]$ such that $s \geq i$, $t \geq j$, and $(s,t) \neq (i,j)$. In other words, pick $H$ to be any point in above the line $L_1$ besides $K$.
        \item[Step 3] Choose $B$ to be any point $(x,y) \in [n] \times [m]$ such that $x \geq s$ and $y \geq t$. In other words, choose $B$ to be any point above the line $L_2$.
        \item[Step 4] Choose $A$ to be any point in the yellow shaded region.
    \end{description}
        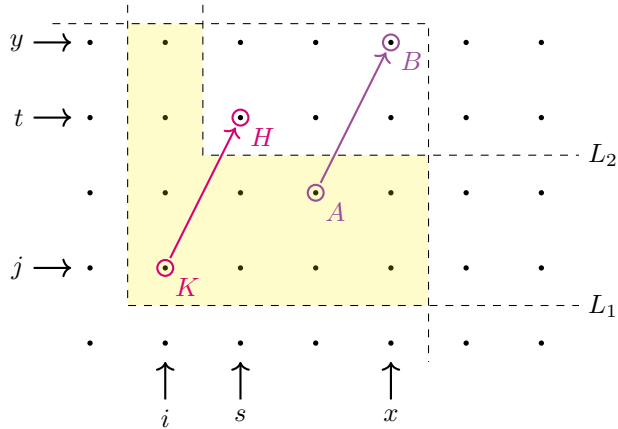
\begin{figure}[h]
    \centering
    \begin{tikzpicture}[scale=1.0]
        \foreach \x in {0,...,6} {
            \foreach \y in {0,...,4} {
                \fill [black] (\x,\y) circle (1pt);
            };
        };
        \fill [yellow, opacity=0.2] (0.5,4.25) -- (1.5,4.25) -- (1.5,2.5) -- (4.5,2.5) -- (4.5,0.5) -- (0.5,0.5) -- cycle;
        \draw [nice-pink, thick] (1,1) circle (3pt) node (K) {} node [below right] {$K$};
        \draw [nice-pink, thick] (2,3) circle (3pt) node (H) {} node [below right] {$H$};
        \draw [nice-pink, thick, ->] (K) -- (H);
        \draw [black, dashed] (0.5,4.5) -- (0.5,0.5) -- (6.5,0.5) node [right] {$L_1$};
        \draw [black, dashed] (1.5,4.5) -- (1.5,2.5) -- (6.5,2.5) node [right] {$L_2$};
        \draw [nice-purple, thick] (4,4) circle (3pt) node (B) {} node [below right] {$B$};
        \draw [black, dashed] (-0.5,4.25) -- (4.5,4.25) -- (4.5,-0.25);
        \draw [nice-purple, thick] (3,2) circle (3pt) node (A) {} node [below right] {$A$};
        \draw [nice-purple, thick, ->] (A) -- (B);
        \draw [black, thick, ->] (1,-0.75) node [below] {$i$} -- (1,-0.25);
        \draw [black, thick, ->] (2,-0.75) node [below] {$s$} -- (2,-0.25);
        \draw [black, thick, ->] (4,-0.75) node [below] {$x$} -- (4,-0.25);
        \draw [black, thick, ->] (-0.75,1) node [left] {$j$} -- (-0.25,1);
        \draw [black, thick, ->] (-0.75,3) node [left] {$t$} -- (-0.25,3);
        \draw [black, thick, ->] (-0.75,4) node [left] {$y$} -- (-0.25,4);
    \end{tikzpicture}
    \caption{Illustration for the counting procedure in the proof of \Cref{prop:[n] x [m] codensity}.}
    \label{fig:illustration for [n] x [m]}
    \end{figure}
    
    This gives a count of
    \[\sum_{i=0}^n \sum_{j=0}^m \sum_{\substack{i \leq s \leq n \\ j \leq t \leq m \\ (s,t) \neq (i,j)}} \sum_{x=s}^n \sum_{y=t}^m [(x-i+1)(y-j+1)-(x-s+1)(y-t+1)]\]
    $0$'s in the reduced formal context. We first note that the exclusion of $(s,t) = (i,j)$ in the above summation has no effect (the yellow region will be empty when $H=K$), so the number of $0$'s is also given by
    \begin{align*}
        & \sum_{i=0}^n \sum_{j=0}^m \sum_{s=i}^n \sum_{t=j}^m \sum_{x=s}^n \sum_{y=t}^m [(x-i+1)(y-j+1)-(x-s+1)(y-t+1)] \\
        &= \sum_{i=0}^n \sum_{j=0}^m \sum_{s=i}^n \sum_{t=j}^m \sum_{x=s}^n -\frac{1}{2} (m-t+1) (-2 i j+i m+i t+2 i+2 j x+2 j-m s+s t-2 s-2 t x-2 t) \\
        &= \sum_{i=0}^n \sum_{j=0}^m \sum_{s=i}^n \sum_{t=j}^m \frac{1}{2} (m-t+1) (n-s+1) (2 i j-i m-i t-2 i-j n-j s-2 j+m s+n t+2 s+2 t) \\
        &= \sum_{i=0}^n \sum_{j=0}^m \sum_{s=i}^n -\frac{1}{12} (j-m-2) (j-m-1) (n-s+1) (-4 i j+4 i m+6 i+j n+3 j s+2 j-m n-3 m s-2 m-6 s) \\
        &= \sum_{i=0}^n \sum_{j=0}^m \frac{1}{12} (i-n-2) (i-n-1) (j-m-2) (j-m-1) (i j-i m-i-j n-j+m n+m+n) \\
        &= \sum_{i=0}^n -\frac{1}{144} (m+1) (m+2) (m+3) (i-n-2) (i-n-1) (3 i m+4 i-3 m n-3 m-4 n) \\
        &= \frac{1}{576} (m+1) (m+2) (m+3) (n+1) (n+2) (n+3) (3 m n+4 m+4 n)
    \end{align*}
    by standard summation techniques\footnote{Another way to approach this summation is to note that the expression must be a polynomial of degree at most $4$ in both $n$ and $m$. One can then compute sufficiently many values by brute force and use polynomial interpolation to find the closed form.}.
    
    Next, we count the number of rows (equivalently, the number of columns) in the reduced formal context for $\Tr([n] \times [m])$. This is simply the number of pairs $(A,B)$ of points in $[n] \times [m]$ such that $A < B$, and is thus given by
    \begin{align*}
        |J(\Tr([n] \times [m]))| &= \sum_{i=0}^n \sum_{j=0}^m ((n-i+1)(m-j+1)-1) \\
        &= \sum_{i=0}^n -\frac{1}{2} (m+1) (i m+2 i-m n-m-2 n)\\
        &= \frac{1}{4} (m+1) (n+1) (m n+2 m+2 n).
    \end{align*}
    Thus, we compute
    \begin{align*}
        \rho(\Tr([n] \times [m])) &=  \frac{\frac{1}{576} (m+1) (m+2) (m+3) (n+1) (n+2) (n+3) (3 m n+4 m+4 n)}{\left(\frac{1}{4} (m+1) (n+1) (m n+2 m+2 n)\right)^2} \\[5pt]
        &= \frac{(m+2) (m+3) (n+2) (n+3) (3 m n+4 m+4 n)}{36 (m+1) (n+1) (2m + 2n + m n)^2}
    \end{align*}
    as desired.
\end{proof}

\begin{remark}
    Setting $m=0$ in \Cref{prop:[n] x [m] codensity} gives exactly the result of \Cref{thm: density of tamari}.
\end{remark}

\begin{corollary}\label{cor:1112bound}
    From \Cref{prop:[n] x [m] codensity} one can compute that
    \[\lim_{m \to \infty} \rho(\Tr([n] \times [m])) = \frac{(n+3) (3 n+4)}{36 (n+1) (n+2)}\]
    and
    \[\lim_{n,m \to \infty} \rho(\Tr([n] \times [m])) = \frac{1}{12}.\]
\end{corollary}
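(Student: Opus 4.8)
The plan is to deduce both limits directly from the closed form for $\rho(\Tr([n]\times[m]))$ established in \Cref{prop:[n] x [m] codensity}, treating the right-hand side as a rational function first in $m$ (with $n$ fixed) and then jointly in $n$ and $m$; no new ideas are required, only a leading-order analysis.

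For the first limit I would fix $n$ and regard
\[
\rho(\Tr([n] \times [m])) = \frac{(m+2) (m+3) (n+2) (n+3) (3 m n+4 m+4 n)}{36 (m+1) (n+1) (2m + 2n + m n)^2}
\]
as a quotient of polynomials in $m$. Both numerator and denominator have degree $3$ in $m$: in the numerator the factor $(m+2)(m+3)$ contributes degree $2$ and $3mn+4m+4n$ contributes degree $1$ with leading coefficient $3n+4$, so the leading term is $(3n+4)(n+2)(n+3)\,m^3$; in the denominator $(m+1)$ contributes degree $1$ and $(2m+2n+mn)^2 = (m(n+2)+2n)^2$ contributes degree $2$ with leading coefficient $(n+2)^2$, so the leading term is $36(n+1)(n+2)^2\,m^3$. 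Dividing and cancelling one factor of $(n+2)$ gives
\[
\lim_{m \to \infty} \rho(\Tr([n] \times [m])) = \frac{(3n+4)(n+2)(n+3)}{36(n+1)(n+2)^2} = \frac{(n+3)(3n+4)}{36(n+1)(n+2)},
\]
as claimed.

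For the second limit, the cleanest route is to take $n \to \infty$ in the expression just obtained: the numerator $(n+3)(3n+4)$ has leading term $3n^2$ and the denominator $36(n+1)(n+2)$ has leading term $36n^2$, so the ratio tends to $3/36 = 1/12$. To legitimize iterating the limits I would note that the one-variable limit in $m$ is attained uniformly on $n \geq 1$ — dividing out the degree-$3$ leading behaviour yields $\rho(\Tr([n]\times[m])) = \tfrac{(n+3)(3n+4)}{36(n+1)(n+2)} + O(1/m)$ with an error bound independent of $n$ — so the simultaneous limit $\lim_{n,m\to\infty}$ exists and equals $\lim_{n\to\infty}\lim_{m\to\infty}$. (Equivalently, one may substitute $m=n$ into the original formula, obtaining $\tfrac{(n+2)^2(n+3)^2(3n+8)}{36(n+1)^2 n(n+4)^2}$, whose single-variable limit is again $1/12$.) There is no genuine obstacle here; the only step meriting a sentence of care is this justification of the double limit in the second statement, which the uniform error estimate (or the diagonal substitution) dispatches.
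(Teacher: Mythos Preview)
Your proposal is correct and follows exactly the approach implicit in the paper, which states this corollary without proof as an immediate consequence of the closed form in \Cref{prop:[n] x [m] codensity}. Your leading-order analysis is the intended verification, and your extra care with the double limit (uniform $O(1/m)$ error, or the diagonal substitution) is more justification than the paper itself offers.
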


\begin{figure}[h!]
    \centering
    \includegraphics[width=0.3\linewidth,cframe=nice-pink 2pt]{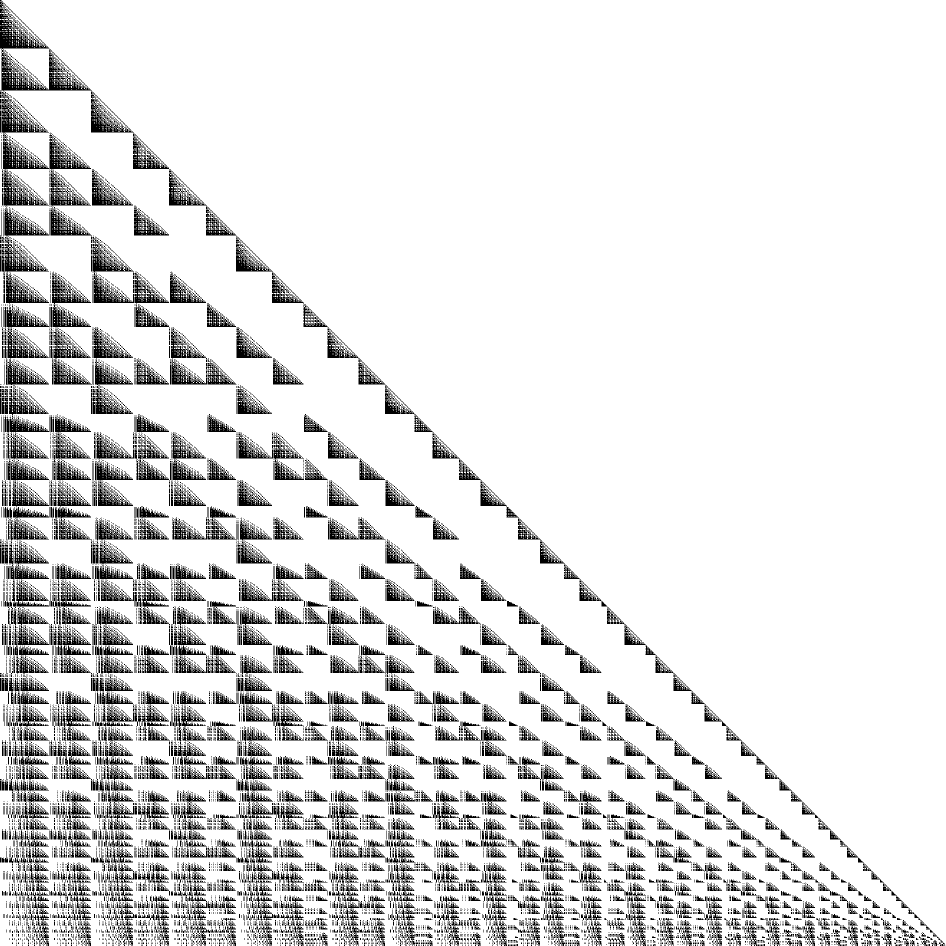}
    \caption{The reduced concept matrix for $\Tr([7] \times [7])$.}
    \label{fig:[7]x[7] context}
\end{figure}

We now can use the calculation of the (co)density to form non-trivial upper bounds on $|\Tr([n] \times [m])|$ using the discussion of \cref{sec:complex}. In \cite{basis_paper} the complexity of $C_{p^nq}$ was computed as
    \[
    \mathfrak{c}(C_{p^nq}) =
    \begin{cases}
        3k+1 &: n=2k \\
        3k+2 &: n=2k+1
    \end{cases}
    \]
    which in conjunction with \cref{thm:complexity} provides an upper bound on the number of transfer systems. 
    
    However, only conjectural evidence was obtained in \cite{basis_paper} for $\mathfrak{c}(C_{p^nq^m})$ whereas \cref{prop:[n] x [m] codensity} works for all $n,m \geq 1$. Using the proof of \cref{prop:[n] x [m] codensity} we see that the trivial upper bound is given by
    \[
    |\Tr([n] \times [m])| \leq 2^{\frac{1}{4} (m+1) (n+1) (m n+2 m+2 n)}.
    \]
    We can combine this with the limiting behavior of the (co)density along with \cref{thm:Ibound} to improve this trivial bound by a factor of $\sqrt{\frac{11}{12}}$ in the exponent:

\begin{corollary}
    For $n, m \geq 1$,
    \[\lvert \Tr([n] \times [m]) \rvert \leq \frac{3}{2}(2^{\frac{\sqrt{33}}{24} (m+1) (n+1) (m n+2 m+2 n)}) -1.\]
\end{corollary}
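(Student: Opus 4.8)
The plan is to feed the two quantities already computed in the proof of \cref{prop:[n] x [m] codensity} directly into \cref{thm:Ibound}. Write $P = (m+1)(n+1)(mn+2m+2n)$, so that $|J(\Tr([n]\times[m]))| = |M(\Tr([n]\times[m]))| = \tfrac14 P$ by the count of pairs $A<B$ carried out there, and write
\[
Z = \tfrac{1}{576}(m+1)(m+2)(m+3)(n+1)(n+2)(n+3)(3mn+4m+4n)
\]
for the number of $0$'s in the reduced context, also as computed there. The number of $1$'s in the context matrix is then $\tfrac{1}{16}P^2 - Z$, so \cref{thm:Ibound} gives
\[
|\Tr([n]\times[m])| \leq \tfrac32\bigl(2^{\sqrt{\frac{1}{16}P^2 - Z + 1}}\bigr) - 1.
\]
Since $\bigl(\tfrac{\sqrt{33}}{24}P\bigr)^2 = \tfrac{33}{576}P^2$ and $\tfrac{1}{16} = \tfrac{36}{576}$, the claimed bound reduces to the single numerical inequality $\tfrac{1}{16}P^2 - Z + 1 \leq \tfrac{33}{576}P^2$, i.e.\ $Z \geq \tfrac{3}{576}P^2 + 1$.

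To establish this, I would form the difference and factor out the obvious common factor, obtaining
\[
576\,\Bigl(Z - \tfrac{3}{576}P^2\Bigr) = (m+1)(n+1)\,D(n,m),
\]
where $D(n,m) = (m+2)(m+3)(n+2)(n+3)(3mn+4m+4n) - 3(m+1)(n+1)(mn+2m+2n)^2$. Expanding both products, the unique degree-$6$ monomial $3m^3n^3$ cancels, and one checks that $D(n,m)$ is a polynomial in $m,n$ with vanishing constant term and with all remaining coefficients strictly positive; consequently $D$ is increasing in each variable on $\{m,n \geq 1\}$, so $D(n,m) \geq D(1,1) = 1284$. Therefore $576\,(Z - \tfrac{3}{576}P^2) = (m+1)(n+1)D(n,m) \geq 4 \cdot 1284 = 5136 \geq 576$, which is exactly $Z \geq \tfrac{3}{576}P^2 + 1$, and the corollary follows.

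The only genuine labor here is the expansion of $D(n,m)$ and the verification that its coefficients are nonnegative; this is purely mechanical, and as a sanity check it is consistent with \cref{cor:1112bound}, since $D$ has degree $5$ while $P^2$ has degree $6$ (so $\rho - \tfrac1{12}\to 0$). If one wishes to avoid the brute-force expansion, an alternative is to note that \cref{prop:[n] x [m] codensity} already writes $\rho(\Tr([n]\times[m])) - \tfrac1{12}$ as a ratio of explicit polynomials whose numerator is manifestly positive for $m,n\geq 1$, and that this numerator grows at least quadratically, which absorbs the additive $+1$. The main obstacle, such as it is, is thus just careful bookkeeping in the polynomial identity; there is no conceptual difficulty beyond the correct invocation of \cref{thm:Ibound}.
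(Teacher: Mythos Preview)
Your argument is correct and is essentially the same as the paper's: both reduce, via \cref{thm:Ibound}, to the inequality $dj^2+1\le\tfrac{11}{12}j^2$ (equivalently your $Z\ge\tfrac{3}{576}P^2+1$), which the paper simply asserts ``one can show'' while you actually carry out the verification by expanding $D(n,m)$ and checking its coefficients. Your expansion and the evaluation $D(1,1)=1284$ are correct, so the proposal is a more detailed version of the paper's own proof rather than a different approach.
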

\begin{proof}
    Let $j = \abs{J(\Tr([n] \times [m]))}$ and let $d = \delta(\Tr([n] \times [m]))$. From the formulas already established, one can show that
    \[dj^2 + 1 \leq \frac{11}{12}j^2\]
    for $n,m \geq 1$, and the result now follows from \cref{thm:Ibound}.
\end{proof}

\subsection{Arbitrary Cyclic Groups}

We now extend the ideas from the case with two prime factors to arbitrarily many prime factors. We can use essentially the same counting procedures to find the relevant terms:

\begin{theorem}\label{thm: codensity arbitrary cyclic}
Let $N = p_1^{n_1} \dots p_k^{n_k}$. Then
\[\rho(\Tr(\Sub(C_N))) = \frac{\sum_{0 \leq x_1 \leq y_1 \leq z_1 \leq n_1} \dots \sum_{0 \leq x_k \leq y_k \leq z_k \leq n_k} \left(\prod_{i=1}^k (z_i - x_i +1) - \prod_{i=1}^k (z_i - y_i +1)\right)}{\left(\sum_{0 \leq x_1 \leq n_1} \dots \sum_{0 \leq x_k \leq n_k} \left(\prod_{i=1}^k (n_i - x_i + 1) - 1 \right)\right)^2}.\]
\end{theorem}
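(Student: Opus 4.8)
The plan is to reprise, with $k$ factors in place of two, the counting argument used in the proof of \cref{prop:[n] x [m] codensity}. Since $C_N$ is abelian, its conjugation action on $\Sub(C_N) \cong [n_1] \times \dots \times [n_k]$ is trivial, so by \cref{prop:join_irr}, \cref{prop:meet_irr}, and \cref{thm:relation} the reduced formal context of $\Tr([n_1] \times \dots \times [n_k])$ has its rows and its columns both indexed by the pairs $A < B$ in $[n_1] \times \dots \times [n_k]$, and a $0$ sits in the position indexed by the object $(A,B)$ and the attribute $(K,H)$ exactly when $A < B$, $K < H$, $A \geq K$, $B \geq H$, and $A \not\geq H$. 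Thus $\rho(\Tr(\Sub(C_N)))$ is the number of such quadruples $(A,B,K,H)$, divided by the square of the number of pairs $A < B$.

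First I would count the pairs $A < B$: for a fixed $A = (x_1, \dots, x_k)$ there are $\prod_{i=1}^k (n_i - x_i + 1)$ elements $B \geq A$, hence $\prod_{i=1}^k(n_i - x_i + 1) - 1$ with $B > A$, and summing over $A$ gives $|J(\Tr([n_1]\times\dots\times[n_k]))| = \sum_{0 \leq x_1 \leq n_1}\dots\sum_{0\leq x_k \leq n_k}(\prod_{i=1}^k(n_i - x_i + 1) - 1)$, which by \cref{prop:meet_irr} also equals $|M(\Tr([n_1]\times\dots\times[n_k]))|$; the denominator of $\rho$ is the square of this quantity. To count the quadruples I would use the multidimensional version of the selection procedure depicted in \cref{fig:illustration for [n] x [m]}: pick $K = (x_1,\dots,x_k)$, then $H = (y_1,\dots,y_k) \geq K$, then $B = (z_1,\dots,z_k) \geq H$, and finally $A$ in the ``box minus box'' region $\{A : K \leq A \leq B\} \setminus \{A : H \leq A \leq B\}$; coordinatewise this region has $\prod_{i=1}^k(z_i - x_i + 1) - \prod_{i=1}^k(z_i - y_i + 1)$ elements, and summing over all $0 \leq x_i \leq y_i \leq z_i \leq n_i$ produces the numerator. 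Dividing yields the claimed identity.

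The step that needs care, although it is routine, is verifying that this procedure is a genuine bijection onto the set of quadruples. Relaxing $K < H$ to $K \leq H$ in the outer sum is harmless because the summand vanishes when $K = H$; every $A$ produced satisfies $A < B$ since $A = B$ would force $H \leq A$, which the region excludes; and conversely, given a quadruple $(A,B,K,H)$ with the five stated conditions, one has $K < H \leq B$ and $K \leq A < B$, so $(K,H,B)$ is a legitimate choice in the first three steps and $A$ lies in the region selected in the fourth, whence every quadruple is obtained exactly once. Beyond this bookkeeping I do not expect a genuine obstacle: in contrast to \cref{prop:[n] x [m] codensity}, the statement here asks only for the multi-index sums themselves rather than a closed-form evaluation, so no polynomial-summation manipulations are required.
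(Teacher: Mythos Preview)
Your proposal is correct and follows exactly the approach the paper intends: the paper states \cref{thm: codensity arbitrary cyclic} with no proof beyond the remark that ``we can use essentially the same counting procedures'' as in \cref{prop:[n] x [m] codensity}, and your write-up supplies precisely that generalization to $k$ factors, together with the bookkeeping verifying the bijection. If anything, your argument is more detailed than what the paper provides.
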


For a fixed value of $k$, one can then use straightforward (albeit tedious) summation techniques to produce a closed form for the codensity in terms of the $n_1, \dots, n_k$. While these calculations can be performed by hand, it is preferable to use software such as Mathematica to undertake the relevant manipulations\footnote{We provide code for this computation in \cite{FCA-Homotopical-Combinatorics_2025}.}.

\begin{corollary}
Let $N = p_1^{n_1}p_2^{n_2}p_3^{n_3}$. Then
\[\adjustbox{width=\linewidth}{$\rho(\Tr(\Sub(C_N))) = 
    \frac{\left(n_1+2\right) \left(n_1+3\right) \left(n_2+2\right) \left(n_2+3\right) \left(n_3+2\right) \left(n_3+3\right) \left(12 n_2 n_1+7 n_2 n_3 n_1+12 n_3 n_1+16 n_1+16 n_2+12 n_2 n_3+16 n_3\right)}{216 \left(n_1+1\right) \left(n_2+1\right) \left(n_3+1\right) \left(2 n_2 n_1+n_2 n_3 n_1+2 n_3 n_1+4 n_1+4 n_2+2 n_2 n_3+4 n_3\right)^2}
$}\]
and hence
\[\lim_{n_1,n_2,n_3 \to \infty} \rho(\Tr(\Sub(C_N))) = \frac{7}{216}.\]
\end{corollary}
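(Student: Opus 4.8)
The plan is to specialize \cref{thm: codensity arbitrary cyclic} to $k=3$ and evaluate the resulting six-fold sum in the numerator and three-fold sum in the denominator in closed form. The key simplification I would exploit is that \emph{all of these sums factor over the prime indices}: the summation region $\{(x_1,y_1,z_1,\dots,x_k,y_k,z_k) : 0 \le x_i \le y_i \le z_i \le n_i\}$ is literally a product of the regions $\{(x_i,y_i,z_i) : 0 \le x_i \le y_i \le z_i \le n_i\}$, and each of $\prod_i(z_i-x_i+1)$ and $\prod_i(z_i-y_i+1)$ is a product of factors each depending only on the $i$-th block of variables. Hence the numerator of \cref{thm: codensity arbitrary cyclic} equals $\prod_{i=1}^k A(n_i) - \prod_{i=1}^k B(n_i)$, where $A(n) = \sum_{0 \le x \le y \le z \le n}(z-x+1)$ and $B(n) = \sum_{0 \le x \le y \le z \le n}(z-y+1)$, and likewise the denominator equals $\big(\prod_{i=1}^k \binom{n_i+2}{2} - \prod_{i=1}^k (n_i+1)\big)^2$, since $\sum_{0\le x\le n}(n-x+1) = \binom{n+2}{2}$ and $\sum_{0\le x\le n}1 = n+1$.

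Next I would compute the one-variable sums. Summing out $y$ gives $A(n) = \sum_{0 \le x \le z \le n}(z-x+1)^2 = \sum_{d=0}^n (n-d+1)(d+1)^2$, which evaluates, using the standard closed forms for $\sum e^2$ and $\sum e^3$, to $A(n) = \frac{(n+1)(n+2)^2(n+3)}{12}$; similarly $B(n) = \binom{n+4}{4}$ (one can double-check $A(n)-B(n) = \binom{n+3}{4}$, recovering \cref{thm: density of tamari}, and that $k=2$ recovers \cref{prop:[n] x [m] codensity}). Substituting with $k=3$ and factoring out the common term $\prod_{i=1}^3 (n_i+1)(n_i+2)(n_i+3)$ from the numerator and $\big(\prod_{i=1}^3 (n_i+1)\big)^2$ from the denominator reduces the codensity to
\[
\rho(\Tr(\Sub(C_N))) = \frac{\prod_{i=1}^3 (n_i+2)(n_i+3) \cdot \Big(8\prod_{i=1}^3(n_i+2) - \prod_{i=1}^3(n_i+4)\Big)}{216 \prod_{i=1}^3 (n_i+1) \cdot \Big(\prod_{i=1}^3(n_i+2) - 8\Big)^2}.
\]
Expanding the two bracketed expressions — in each, the constant terms cancel, leaving a symmetric polynomial of degree $3$ in $n_1,n_2,n_3$ with no constant term — yields $8\prod(n_i+2) - \prod(n_i+4) = 7n_1n_2n_3 + 12(n_1n_2+n_1n_3+n_2n_3) + 16(n_1+n_2+n_3)$ and $\prod(n_i+2) - 8 = n_1n_2n_3 + 2(n_1n_2+n_1n_3+n_2n_3) + 4(n_1+n_2+n_3)$, which is exactly the claimed formula. (As the footnote to \cref{thm: codensity arbitrary cyclic} notes, one may instead carry out the iterated summation directly in Mathematica, or use polynomial interpolation after bounding the degree; the factoring observation is just what keeps the hand computation from ballooning.)

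Finally, for the limit I would read off the top-degree behavior of the closed form as $n_1,n_2,n_3 \to \infty$ jointly: the numerator is asymptotic to $\big(\prod_i n_i^2\big)\cdot 7 n_1 n_2 n_3 = 7\, n_1^3 n_2^3 n_3^3$ and the denominator to $216\big(\prod_i n_i\big)\cdot (n_1 n_2 n_3)^2 = 216\, n_1^3 n_2^3 n_3^3$, so $\rho(\Tr(\Sub(C_N))) \to \tfrac{7}{216}$. The only real work is the algebraic bookkeeping in the middle step, and there is no genuine obstacle once the factorization over primes is in hand.
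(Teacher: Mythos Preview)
Your proof is correct and in fact cleaner than what the paper does. The paper does not give an explicit proof of this corollary: it simply states that for fixed $k$ one applies \cref{thm: codensity arbitrary cyclic} and evaluates the resulting sums by ``straightforward (albeit tedious) summation techniques,'' preferably with Mathematica. Your observation that both the numerator and denominator of \cref{thm: codensity arbitrary cyclic} split as differences of products over the prime indices --- so that one only needs the single-variable quantities $A(n)=\frac{(n+1)(n+2)^2(n+3)}{12}$ and $B(n)=\binom{n+4}{4}$ --- is a genuine simplification the paper does not record. It replaces a six-fold iterated sum by a short algebraic manipulation, and as a bonus your factorized form $\rho = \big(\prod_i A(n_i) - \prod_i B(n_i)\big)\big/\big(\prod_i \binom{n_i+2}{2} - \prod_i(n_i+1)\big)^2$ immediately yields the limiting value $(2^k-1)/6^k$ for general $k$ by reading off leading terms, which the paper leaves as \cref{conjecture: limiting densities cyclic}.
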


One can perform computations for small $k$ to establish the first terms of the limiting codensities:
\renewcommand*{\arraystretch}{1.5}
\begin{table}[h]
\begin{tabular}{|c|c|c|c|c|c|}
\hline
$k$                                                                            & 1             & 2              & 3               & 4               & 5                 \\ \hline
$\lim_{n_1, \dots, n_k \to \infty} \rho(\Tr([n_1] \times \dots \times [n_k]))$ & $\frac{1}{6}$ & $\frac{1}{12}$ & $\frac{7}{216}$ & $\frac{5}{432}$ & $\frac{31}{7776}$ \\ \hline
\end{tabular}
\caption{A table with the first few terms of limiting codensities.}
\end{table}

This sequence has an emerging pattern which we leave as a conjecture.
\begin{conjecture}\label{conjecture: limiting densities cyclic}
    For all positive integers $k$,
    \[\lim_{n_1, \dots, n_k \to \infty} \rho(\Tr([n_1] \times \dots [n_k])) = \frac{2^k-1}{6^k}.\]
\end{conjecture}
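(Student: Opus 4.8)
The plan is to evaluate the $k$-fold limit directly from the explicit formula in \cref{thm: codensity arbitrary cyclic}, exploiting the fact that every multiple sum occurring there \emph{factors} over the coordinates $i = 1,\dots,k$. Set
\[e_1(n) = \sum_{0 \le x \le y \le z \le n}(z - x + 1), \qquad e_2(n) = \sum_{0 \le x \le y \le z \le n}(z - y + 1), \qquad d(n) = \binom{n+2}{2}.\]
Since the constraints $0 \le x_i \le y_i \le z_i \le n_i$ are imposed coordinatewise and the summands are products over $i$, the numerator in \cref{thm: codensity arbitrary cyclic} equals $\prod_{i=1}^k e_1(n_i) - \prod_{i=1}^k e_2(n_i)$, and the base of the denominator equals $\prod_{i=1}^k d(n_i) - \prod_{i=1}^k(n_i + 1)$. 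Thus
\[\rho(\Tr([n_1]\times\dots\times[n_k])) = \frac{\prod_{i=1}^k e_1(n_i) - \prod_{i=1}^k e_2(n_i)}{\left(\prod_{i=1}^k d(n_i) - \prod_{i=1}^k(n_i+1)\right)^2}.\]

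The second step is to record the leading asymptotics of the one-variable ingredients. Rewriting $e_1(n) = \sum_{0 \le x \le z \le n}(z-x+1)^2$ and $e_2(n) = \sum_{0 \le y \le z \le n}(y+1)(z-y+1)$ and using the standard power-sum estimates $\sum_{m \le N} m^2 \sim N^3/3$ and $\sum_{m \le N} m \sim N^2/2$ (or, if one prefers a fully elementary route, by writing down the exact degree-$4$ polynomials), one gets $e_1(n) = \tfrac{1}{12}n^4 + O(n^3)$, $e_2(n) = \tfrac{1}{24}n^4 + O(n^3)$, and $d(n) = \tfrac{1}{2}n^2 + O(n)$.

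The third step is the limit itself: divide numerator and denominator of the displayed expression for $\rho$ by $\prod_{i=1}^k n_i^4$. As $n_1, \dots, n_k \to \infty$, the normalized numerator $\prod_i \tfrac{e_1(n_i)}{n_i^4} - \prod_i \tfrac{e_2(n_i)}{n_i^4}$ tends to $12^{-k} - 24^{-k} = \tfrac{2^k - 1}{24^k}$, while the normalized denominator $\left(\prod_i \tfrac{d(n_i)}{n_i^2} - \tfrac{\prod_i(n_i+1)}{\prod_i n_i^2}\right)^2$ tends to $\left(2^{-k} - 0\right)^2 = 4^{-k}$ (the second term vanishes in the limit because $\prod_i(n_i+1) \le 2^k \prod_i n_i$ for $n_i \ge 1$). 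Dividing, $\lim \rho = (2^k - 1)4^k/24^k = \tfrac{2^k - 1}{6^k}$.

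There is essentially no obstacle to overcome; the single point demanding care is that $\lim_{n_1,\dots,n_k \to \infty}$ is a genuine multivariable limit, so one must confirm its value does not depend on how fast the $n_i$ grow. This is automatic once --- after normalizing by $\prod_i n_i^4$ --- the numerator and denominator have each been displayed as a product (respectively, a difference of two products) of single-variable functions, each converging to a finite constant, with the denominator's limit $4^{-k} \ne 0$. As a consistency check, $k=1$ reproduces \cref{thm: density of tamari} (there $e_1(n) - e_2(n) = \binom{n+3}{4}$ and $d(n) - (n+1) = \binom{n+1}{2}$), and $k = 2, 3$ reproduce the tabulated values $\tfrac{1}{12}$ (\cref{cor:1112bound}) and $\tfrac{7}{216}$.
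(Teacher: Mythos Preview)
The paper does not prove this statement --- it is explicitly left as a conjecture based on the computed values for $k \le 5$. Your argument is correct and in fact resolves the conjecture.

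The key step the authors apparently overlooked is your first one: both the numerator and the base of the denominator in \cref{thm: codensity arbitrary cyclic} split as a \emph{difference of coordinatewise products}, because the index set $\prod_i \{0 \le x_i \le y_i \le z_i \le n_i\}$ is itself a product and each summand is a product over $i$. Once one has
\[\rho = \frac{\prod_i e_1(n_i) - \prod_i e_2(n_i)}{\big(\prod_i d(n_i) - \prod_i (n_i+1)\big)^2},\]
the multivariable limit reduces, after dividing through by $\prod_i n_i^4$, to a product of one-variable limits. Your asymptotics $e_1(n)\sim n^4/12$, $e_2(n)\sim n^4/24$, $d(n)\sim n^2/2$ are correct (and easily made exact as degree-$4$ polynomials), and the vanishing of the lower-order term $\prod_i(n_i+1)/\prod_i n_i^2$ is uniform in the sense required for the multivariable limit. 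The resulting value $(12^{-k}-24^{-k})/4^{-k} = (2^k-1)/6^k$ matches all tabulated cases, including the $k=1,2$ values derived earlier in the paper.
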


\subsection{Squarefree Cyclic Groups}

Among cyclic groups, of special interest to homotopical combinatorics are cyclic groups of squarefree order, since the subgroup lattices are exactly the Boolean lattices $[1]^k$. As such, we will conclude this section by explicitly unwrapping the result of \cref{thm: codensity arbitrary cyclic} in this setting.

\begin{proposition}
Let $k \geq 1$. Then
    \[\rho(\Tr([1]^k)) = \frac{6^k - 5^k}{(3^k-2^k)^2}.\]
\end{proposition}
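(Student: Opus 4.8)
The plan is to specialize the general formula of \cref{thm: codensity arbitrary cyclic} to the case $n_1 = \dots = n_k = 1$, where each summation index runs over the chain $0 \leq x_i \leq y_i \leq z_i \leq 1$. Since $z_i - x_i + 1 \in \{1, 2\}$ and $z_i - y_i + 1 \in \{1, 2\}$ depending only on whether $z_i > x_i$ (resp. $z_i > y_i$), the products in the numerator collapse nicely. First I would count, for each coordinate $i$ separately, the triples $(x_i, y_i, z_i)$ with $0 \leq x_i \leq y_i \leq z_i \leq 1$: there are exactly four such triples, namely $(0,0,0), (0,0,1), (0,1,1), (1,1,1)$. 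For the numerator, I would track the pair $(z_i - x_i + 1,\ z_i - y_i + 1)$ across these four triples, obtaining $(1,1), (2,2), (2,1), (1,1)$ respectively.

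Next I would observe that the numerator of \cref{thm: codensity arbitrary cyclic} factors as a difference of two products over $i$, so summing over all choices of triples gives
\[
    \prod_{i=1}^k \left(\sum_{\text{triples}} (z_i - x_i + 1)\right) - \prod_{i=1}^k \left(\sum_{\text{triples}} (z_i - y_i + 1)\right).
\]
From the table above, $\sum (z_i - x_i + 1) = 1 + 2 + 2 + 1 = 6$ and $\sum (z_i - y_i + 1) = 1 + 2 + 1 + 1 = 5$, so the numerator is $6^k - 5^k$.

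For the denominator, I would compute $|J(\Tr([1]^k))|$ directly: this is $\sum_{0 \leq x_1 \leq 1} \dots \sum_{0 \leq x_k \leq 1} \left(\prod_{i=1}^k (n_i - x_i + 1) - 1\right)$ with $n_i = 1$, i.e. $\prod_i (2 - x_i) - 1$ summed over $\{0,1\}^k$. Since $2 - x_i \in \{1, 2\}$, the product $\prod_i (2 - x_i)$ summed over all $x \in \{0,1\}^k$ equals $\prod_i (1 + 2) = 3^k$, while subtracting $1$ in each of the $2^k$ terms contributes $-2^k$. Hence $|J(\Tr([1]^k))| = 3^k - 2^k$, and the denominator is $(3^k - 2^k)^2$, yielding the claimed formula $\rho(\Tr([1]^k)) = \frac{6^k - 5^k}{(3^k - 2^k)^2}$.

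I do not anticipate a genuine obstacle here; the only point requiring a little care is verifying that the numerator of \cref{thm: codensity arbitrary cyclic} really does factor into the advertised product-of-sums form after the difference is expanded coordinatewise — that is, confirming that the multilinearity in the $k$ coordinate-blocks is exactly what makes $\sum \prod = \prod \sum$ valid for each of the two terms. Once that is in hand, everything reduces to the elementary per-coordinate bookkeeping sketched above.
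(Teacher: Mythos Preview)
Your proposal is correct and follows essentially the same approach as the paper: specialize \cref{thm: codensity arbitrary cyclic} at $n_1=\dots=n_k=1$, tabulate the four per-coordinate triples $(x_i,y_i,z_i)$, and exploit the product structure over coordinates. Your use of the identity $\sum_{(x,y,z)}\prod_i f(x_i,y_i,z_i)=\prod_i\sum_{(x_i,y_i,z_i)} f(x_i,y_i,z_i)$ is in fact slightly more direct than the paper's route, which rewrites $\prod_i(z_i-x_i+1)=2^{\sum_i(z_i-x_i)}$, stratifies by $s=\sum_i(z_i-x_i)$, and then invokes the binomial theorem to reach $6^k$ and $5^k$.
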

\begin{proof}
    For arbitrary $k$, setting $n_1 = \dots = n_k = 1$ yields
    \[
        \sum_{0 \leq x_1 \leq y_1 \leq z_1 \leq 1} \cdots \sum_{0 \leq x_k \leq y_k \leq z_k \leq 1} \left(\prod_{i=1}^k (z_i - x_i +1) - \prod_{i=1}^k (z_i - y_i +1)\right)
    \]
    $0$'s in the formal context. We can view this sum as indexed over triples $(x,y,z)$ of $k$-bit binary integers $x = x_1 x_2 \dots x_k,\, y = y_1 y_2 \dots y_k,\, z = z_1 z_2 \dots z_k$ such that
    \[x \preceq y \preceq z \preceq 2^k-1\]
    where $\preceq$ denotes \emph{bitwise} comparison.
    
    Now we will proceed by analyzing each of the two product terms in the above sum in turn. We begin by recording the four possible options for triples of bits $x_i \leq y_i \leq z_i$ in \cref{tab:bits}.
    \renewcommand*{\arraystretch}{1.0}
    \begin{table}[h]
\begin{tabular}{|c|c|c|c|c|}
\hline
Option & 1 & 2 & 3 & 4 \\ \hline
$z_i$ & 0 & 1 & 1 & 1 \\ \hline
$y_i$ & 0 & 0 & 1 & 1 \\ \hline
$x_i$ & 0 & 0 & 0 & 1 \\ \hline
$z_i - x_i$ & 0 & 1 & 1 & 0 \\ \hline
$z_i - y_i$ & 0 & 1 & 0 & 0 \\ \hline
\end{tabular}
\caption{Possible options for bits $x_i \leq y_i \leq z_i$.}\label{tab:bits}
\end{table}

    We have
    \[\sum_{x \preceq y \preceq z \preceq 2^k-1} \prod_{i=1}^k (z_i - x_i +1) = \sum_{x \preceq y \preceq z \preceq 2^k-1} 2^{\sum_i(z_i - x_i)} = \sum_{s=0}^k \sum_{\substack{x \preceq y \preceq z \preceq 2^k-1 \\ \sum_i (z_i - x_i) = s}} 2^s.\]
    Appealing to \cref{tab:bits} we see that to produce a triple $x \preceq y \preceq z \preceq 2^k-1$ such that $\sum_i (z_i - x_i) = s$, we must choose $s$ bits where either $(x_i=0,y_i=0,z_i=1)$ or $(x_i=0,y_i=1,z_i=1)$ and $k-s$ bits where either $(x_i=0,y_i=0,z_i=0)$ or $(x_i=1,y_i=1,z_i=1)$. There are thus a total of
    \[\binom{k}{s} 2^s 2^{k-s}\]
    such triples $x \preceq y \preceq z \preceq 2^k-1$, and we get
    \[\sum_{x \preceq y \preceq z \preceq 2^k-1} \prod_{i=1}^k (z_i - x_i +1) = \sum_{s=0}^k \binom{k}{s} 2^s 2^{k-s} 2^s = 6^k\]
    by the binomial theorem. Similarly, to produce a triple $x \preceq y \preceq z \preceq 2^k-1$ such that $\sum_i (z_i - y_i) = s$, we must choose $s$ bits to be $(0,0,1)$ and $k-s$ bits to be one of $(0,0,0), (0,1,1), (1,1,1)$. There are $\binom{k}{s} 3^{k-s}$ such triples, and thus
    \[\sum_{x \preceq y \preceq z \preceq 2^k-1} \prod_{i=1}^k (z_i - y_i +1) = \sum_{s=0}^k \binom{k}{s} 3^{k-s} 2^s = 5^k.\]
    Putting this together, we obtain
    \[\sum_{0 \leq x_1 \leq y_1 \leq z_1 \leq 1} \cdots \sum_{0 \leq x_k \leq y_k \leq z_k \leq 1} \left(\prod_{i=1}^k (z_i - x_i +1) - \prod_{i=1}^k (z_i - y_i +1)\right)= 6^k - 5^k.\]

    We similarly obtain
    \[\sum_{x \preceq 2^k-1} \prod_{i=1}^k (1 - x_i + 1) - 2^k = \sum_{x \preceq 2^k-1} 2^{k-\sum_i x_i} - 2^k = 3^k-2^k\]
    rows in the formal context, and the result follows.
\end{proof}

\begin{corollary}
\[
    \lim_{k \to \infty} \rho (\Tr([1]^k)) = 0.
\]
\end{corollary}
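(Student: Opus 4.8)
The plan is to read off the closed form $\rho(\Tr([1]^k)) = \dfrac{6^k - 5^k}{(3^k - 2^k)^2}$ established in the preceding proposition and perform a routine asymptotic estimate, since the numerator grows like $6^k$ while the denominator grows like $(3^k)^2 = 9^k$, and $6^k/9^k = (2/3)^k \to 0$.

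Concretely, first I would bound the numerator trivially from above by $6^k - 5^k \leq 6^k$. Then I would bound the denominator from below: writing $3^k - 2^k = 3^k\bigl(1 - (2/3)^k\bigr)$ and noting that $(2/3)^k \leq 2/3$ for all $k \geq 1$, we get $3^k - 2^k \geq 3^k/3 = 3^{k-1}$, hence $(3^k - 2^k)^2 \geq 9^{k-1}$. Combining these gives
\[
0 \leq \rho(\Tr([1]^k)) \leq \frac{6^k}{9^{k-1}} = 9\left(\frac{2}{3}\right)^k,
\]
and since the right-hand side tends to $0$ as $k \to \infty$, the squeeze theorem yields $\lim_{k \to \infty}\rho(\Tr([1]^k)) = 0$.

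There is no real obstacle here: the only mild subtlety is getting a clean lower bound on $3^k - 2^k$, and the choice $3^{k-1}$ is elementary. If one wanted to avoid even that, one could instead factor $\rho(\Tr([1]^k)) = \frac{6^k-5^k}{(3^k-2^k)^2}$ and divide numerator and denominator by $9^k$, obtaining $\frac{(2/3)^k - (5/9)^k}{(1-(2/3)^k)^2}$, whose numerator tends to $0$ and denominator tends to $1$; this makes the limit immediate without any explicit inequalities.
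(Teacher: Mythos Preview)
Your argument is correct and is exactly the routine asymptotic check the paper has in mind; the corollary is stated without proof in the paper, being an immediate consequence of the closed form $\rho(\Tr([1]^k)) = (6^k-5^k)/(3^k-2^k)^2$, and your squeeze (or equivalently your division-by-$9^k$ variant) fills in precisely that omitted step.
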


    \begin{figure}[h!]
    \centering
    \includegraphics[width=0.3\linewidth,cframe=nice-pink 2pt]{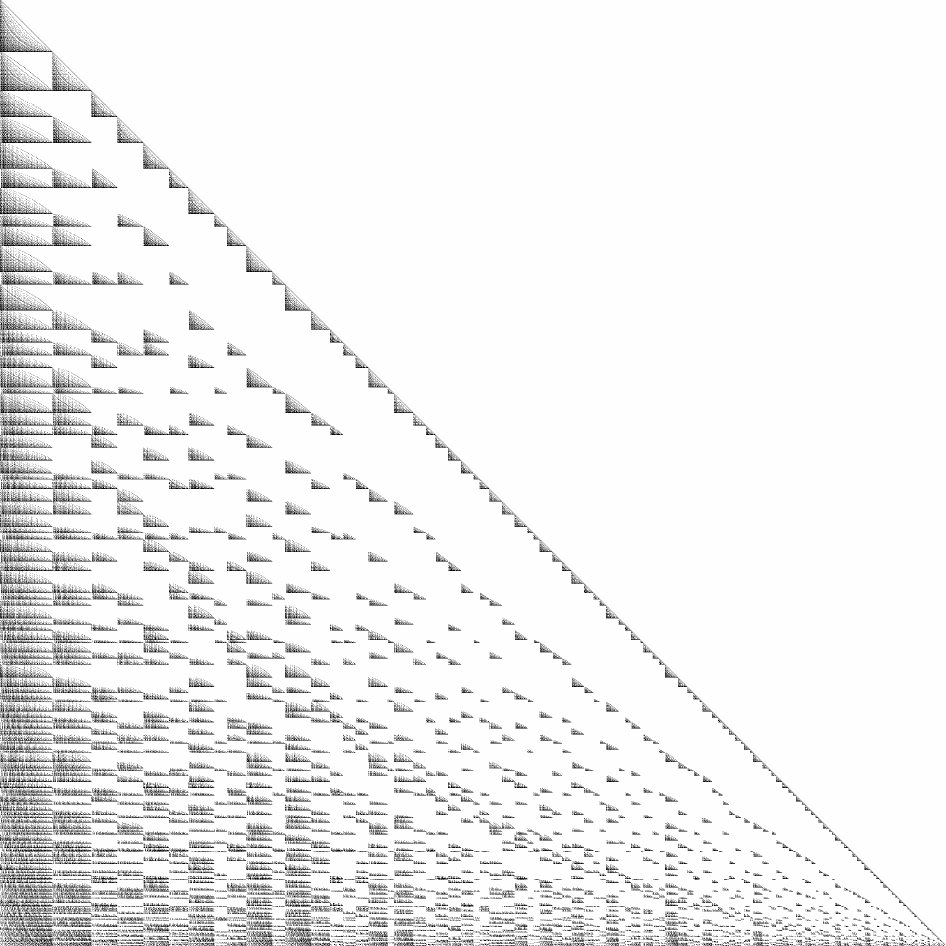}
    \caption{The reduced concept matrix for $\Tr([1]^7)$.}
    \label{fig:cube context}
\end{figure}

\section{The Reduced Context for Elementary Abelian \texorpdfstring{$p$}{p}-Groups}\label{sec:elementary}

In this section we will compute the codensity of the formal context associated to elementary abelian $p$-groups. We will repeatedly view $C_p^n$ as $\mathbb{F}_p^n$. We define the auxiliary function
\[
a_d \coloneq \sum_{i=0}^d \qbinom{d}{i}{p}\]
which counts the total number of subspaces of $\mathbb{F}_p^d$.

\begin{theorem}\label{thm:ele}
Let $n > 0$ and $p$ prime. Then
    \[
\rho(\Tr(\Sub(C_p^n)))  = \dfrac{\sum_{i=0}^{n-1} \sum_{j=1}^{n-i} \sum_{k=0}^{n-i-j} \qbinom{n}{i}{p} \qbinom{n-i}{j}{p} \qbinom{n-i-j}{k}{p} (a_{j+k} - a_k)}{\left( \sum_{i=0}^{n-1} \qbinom{n}{i}{p} (a_{n-i}-1) \right)^2}.
    \]
\end{theorem}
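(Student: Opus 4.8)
The plan is to identify the reduced formal context of $\Tr(\Sub(C_p^n))$ explicitly and then count its $0$-entries and its rows by elementary subspace counting. First I would note that $C_p^n$ is abelian, so its conjugation action on $\Sub(C_p^n)$ is trivial; consequently the orbit quotients in \cref{prop:join_irr,prop:meet_irr} are trivial, and the clause ``for all $g \in G$'' in \cref{thm:relation} collapses to a single condition. Identifying $\Sub(C_p^n)$ with the lattice of $\mathbb{F}_p$-subspaces of $V := \mathbb{F}_p^n$, the objects and attributes of the reduced context are both $\operatorname{Rel}^*(\Sub(C_p^n)) = \{(A,B) : A < B\}$, and by \cref{thm:relation} the matrix has a $0$ at the entry indexed by object $(A,B)$ and attribute $(K,H)$ exactly when $A \geq K$, $B \geq H$, and $A \not\geq H$. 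Since $H \leq B$ and $H \not\leq A$ force $A \neq B$, the constraint $A < B$ is then automatic, so
\[
\rho(\Tr(\Sub(C_p^n))) = \frac{\#\{(A,B,K,H) : K < H \leq B,\ K \leq A \leq B,\ H \not\leq A\}}{\bigl(\#\{(A,B) : A < B\}\bigr)^2}.
\]
For the denominator I would choose $A$ of dimension $i$ (necessarily $0 \leq i \leq n-1$) in $\qbinom{n}{i}{p}$ ways, and then $B \supsetneq A$, which corresponds to a nonzero subspace of $V/A \cong \mathbb{F}_p^{n-i}$, in $a_{n-i}-1$ ways; summing gives $\sum_{i=0}^{n-1}\qbinom{n}{i}{p}(a_{n-i}-1)$, matching the stated denominator.

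For the numerator I would build the tuple along the flag $K \leq H \leq B$ and then insert $A$ last. Choose $K$ of dimension $i$ ($\qbinom{n}{i}{p}$ ways); choose $H$ with $K < H$ of dimension $i+j$, $j \geq 1$, i.e. a $j$-dimensional subspace of $V/K \cong \mathbb{F}_p^{n-i}$ ($\qbinom{n-i}{j}{p}$ ways); choose $B \supseteq H$ of dimension $i+j+k$, $k \geq 0$, i.e. a $k$-dimensional subspace of $V/H \cong \mathbb{F}_p^{n-i-j}$ ($\qbinom{n-i-j}{k}{p}$ ways); finally choose $A$ with $K \leq A \leq B$ and $H \not\leq A$. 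The subspaces in the interval $[K,B]$ are in bijection with subspaces of $B/K \cong \mathbb{F}_p^{j+k}$, of which there are $a_{j+k}$, and those additionally containing $H$ are in bijection with subspaces of $B/H \cong \mathbb{F}_p^{k}$, of which there are $a_k$; hence there are $a_{j+k} - a_k$ admissible choices of $A$. Multiplying these counts and summing over $0 \leq i \leq n-1$, $1 \leq j \leq n-i$, $0 \leq k \leq n-i-j$ reproduces exactly the numerator in the statement, completing the proof.

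I do not expect a genuine obstacle: the argument is bookkeeping once the context has been identified. The points that require care are verifying that the displayed parametrization is a genuine bijection onto the set of $0$-entries — so that the degenerate possibilities (e.g. $H = B$, or $A = K$, or $A = B$) are each counted exactly once, and so that the ``automatically satisfied'' constraint $A < B$ introduces no double counting — and checking that the summation ranges $0 \leq i \leq n-1$, $1 \leq j \leq n-i$, $0 \leq k \leq n-i-j$ are precisely those for which the corresponding configurations are non-empty.
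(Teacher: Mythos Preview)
Your proposal is correct and follows essentially the same route as the paper: both build the numerator by choosing the flag $K < H \leq B$ via successive quotients (contributing $\qbinom{n}{i}{p}\qbinom{n-i}{j}{p}\qbinom{n-i-j}{k}{p}$) and then counting $A \in [K,B]$ with $H \not\leq A$ as $a_{j+k}-a_k$, and both handle the denominator by choosing the smaller subspace first and then a nonzero quotient. Your explicit remark that the abelianity of $C_p^n$ trivializes the $G$-orbit and the ``for all $g$'' clause, and your observation that $A < B$ is forced automatically, are exactly the implicit justifications the paper relies on.
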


\begin{proof}
We begin by discussing the numerator, that is, the number of 0's in the context. Once again appealing to \cref{thm:relation} we are led to the following steps:
    \begin{description}
        \item[Step 1] Choose $K \leq C_{p}^n$. Suppose that $K$ is of dimension $i$, then we pick a corresponding subspace of $\mathbb{F}_p^n$ of dimension $i$. As we will later pick $H$ with $H > K$, note that the dimension of $K$ cannot be $n$. There are
        \[
        \sum_{i=0}^{n-1} \qbinom{n}{i}{p}
        \]
        such subspaces.
        \item[Step 2] Pick $H > K$. For this we pick the dimension of $H/K$, which we denote $j$, then pick a subspace of $\mathbb{F}_p^n/K$ of dimension $j$. As $H>K$ we know that the dimension of $H$ is $i+j$ and hence $j >0$. This gives us 
        \[
        \sum_{i=0}^{n-1} \qbinom{n}{i}{p} \sum_{j=1}^{n-i} \qbinom{n-i}{j}{p}
        \]
        total combinations of completing this step.
        \item[Step 3] We now pick $B$ with $B \geq H$. As in Step 2, we pick the dimension of $B/H$, say $k$, and then pick a subspace of $\mathbb{F}_p^n/H$ of dimension $k$. Note that $B$ has dimension $i+j+k$. This gives
        \[
        \sum_{i=0}^{n-1} \qbinom{n}{i}{p} \sum_{j=1}^{n-i} \qbinom{n-i}{j}{p} \sum_{k=0}^{n-i-j} \qbinom{n-i-j}{k}{p}
        \]
        total combinations of completing this step.
        \item[Step 4] We are left to pick $A$ with $B \geq A \geq K$ and $A \not\geq H$ (in particular $A \neq B$). To do so we can pick any subspace of $B/K$ and then subtract the subspaces of $B/H$. This gives our final result of
        \[
        \sum_{i=0}^{n-1} \qbinom{n}{i}{p} \sum_{j=1}^{n-i} \qbinom{n-i}{j}{p} \sum_{k=0}^{n-i-j} \qbinom{n-i-j}{k}{p} (a_{j+k}-a_k).
        \]
        Rearranging the terms gives the numerator as claimed.
    \end{description}

    For the denominator we need to count the number of pairs $(K,H)$ with $1\leq K < H \leq C_p^n$. To do so we pick $H$, which cannot be the trivial subgroup, of which there are
    \[
    \sum_{i=0}^{n-1} \qbinom{n}{i}{p}
    \]
    options. We then pick $K$ which is a non-trivial subgroup of $K$, of which gives
    \[
    \sum_{i=0}^{n-1} \qbinom{n}{i}{p} (a_{n-i}-1)
    \]
    options, completing the proof.
\end{proof}
 
\begin{corollary}
Let $n \geq 2$. Then
 \[\lim_{p \to \infty}\rho(\Tr(\Sub(C_p^n))) = \begin{cases} 1/4 &: n = 2 \\ 0 &: \text{else} \end{cases}\]
    and
    \[\lim_{n \to \infty}\rho(\Tr(\Sub(C_p^n))) = 0.\]
for all primes $p$.
\end{corollary}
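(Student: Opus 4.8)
The plan is to read both limits off the $p$-degrees of the numerator and denominator of the rational expression in \Cref{thm:ele}, together with a comparison of leading coefficients in the one case where the degrees agree. Recall that $\qbinom{n}{d}{p}$ is, as a function of $p$, a polynomial of degree $d(n-d)$ with non-negative integer coefficients and leading coefficient $1$; hence $a_d=\sum_{\ell=0}^{d}\qbinom{d}{\ell}{p}$ is a polynomial of degree $\lfloor d^2/4\rfloor=\max_{0\le\ell\le d}\ell(d-\ell)$ with positive leading coefficient, and for $j\ge1$ the difference $a_{j+k}-a_k$ has degree exactly $\lfloor(j+k)^2/4\rfloor$, again with positive leading coefficient. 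Consequently every summand of the numerator of \Cref{thm:ele} is a polynomial in $p$ with positive leading coefficient, so no top-degree cancellation occurs and the numerator has $p$-degree exactly
\[
N(n):=\max\bigl\{\,i(n-i)+j(n-i-j)+k(n-i-j-k)+\lfloor(j+k)^2/4\rfloor\;:\;i\ge0,\ j\ge1,\ k\ge0,\ i+j+k\le n\,\bigr\};
\]
likewise the denominator has $p$-degree exactly $2D(n)$, where $D(n):=\max\{\,m(n-m)+\lfloor m^2/4\rfloor:1\le m\le n\,\}$ is the $p$-degree of $\sum_{i=0}^{n-1}\qbinom{n}{i}{p}(a_{n-i}-1)$, the number of pairs $K<H$ of subspaces of $\mathbb{F}_p^n$.

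I then need two crude estimates. First, $N(n)\le n^2/2$: writing $u:=n-i-j-k\ge0$ so that $i+j+k+u=n$, the three products sum to $\tfrac12(n^2-i^2-j^2-k^2-u^2)$, while $\lfloor(j+k)^2/4\rfloor\le(j+k)^2/4\le(j^2+k^2)/2$, and adding these gives $N(n)\le\tfrac12(n^2-i^2-u^2)\le n^2/2$. Second, $D(n)\ge(n^2-1)/3$: completing the square, $m(n-m)+\lfloor m^2/4\rfloor\ge nm-\tfrac34m^2-\tfrac14=\tfrac{n^2}{3}-\tfrac34\bigl(m-\tfrac{2n}{3}\bigr)^2-\tfrac14$, so choosing $m$ to be the integer nearest $2n/3$ gives $D(n)\ge\tfrac{n^2}{3}-\tfrac{7}{16}$, and since $D(n)$ is an integer this forces $D(n)\ge(n^2-1)/3$.

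With these in hand the three claims follow. For $n=2$ the degrees coincide ($N(2)=2=2D(2)$), so I substitute the explicit values $\qbinom{2}{0}{p}=\qbinom{2}{2}{p}=1$, $\qbinom{2}{1}{p}=1+p$, $a_0=1$, $a_1=2$, $a_2=p+3$ into \Cref{thm:ele}, compute the numerator to be $p^2+5p+5$ and the denominator to be $(2p+3)^2$, and conclude $\lim_{p\to\infty}\rho=1/4$. For $n\ge3$ we have $N(n)\le n^2/2<(2n^2-2)/3\le 2D(n)$, the middle inequality being equivalent to $n^2>4$, so the numerator has strictly smaller $p$-degree than the denominator and $\lim_{p\to\infty}\rho=0$. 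For the limit in $n$ with $p$ fixed, I bound the numerator termwise via $\qbinom{n}{d}{p}\le C_p\,p^{d(n-d)}$ and $a_d\le(d+1)C_p\,p^{\lfloor d^2/4\rfloor}$, where $C_p:=\prod_{m\ge1}(1-p^{-m})^{-1}<\infty$, obtaining a numerator $\le C_p^{4}(n+1)^{4}p^{n^2/2}$, and I bound the denominator below by a single summand via $\qbinom{n}{d}{p}\ge p^{d(n-d)}$ and $a_d-1\ge p^{\lfloor d^2/4\rfloor}$, obtaining a denominator $\ge p^{2D(n)}\ge p^{(2n^2-2)/3}$; hence $\rho(\Tr(\Sub(C_p^n)))\le C_p^{4}(n+1)^{4}\,p^{(4-n^2)/6}\to0$ as $n\to\infty$.

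The one place requiring genuine care is the pair of bounds $N(n)\le n^2/2$ and $D(n)\ge(n^2-1)/3$, yielding the strict inequality $N(n)<2D(n)$ for $n\ge3$. Combinatorially, $2D(n)$ is the $p$-degree of the square of the count of pairs $K<H$, while $N(n)$ is the $p$-degree of the count of quadruples $K<H\le B$, $K\le A\le B$, $A\not\ge H$; one might a priori expect the larger configuration to contribute the larger degree, and the content of the estimate is precisely that it does not. Everything else is routine manipulation of Gaussian binomials, and I anticipate no further obstacle.
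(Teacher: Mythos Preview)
Your argument is correct and follows the same strategy the paper adopts: compare the $p$-degrees of the numerator and denominator in \Cref{thm:ele}. The paper records this as a separate Proposition immediately after the Corollary, computing the \emph{exact} degrees $\lfloor n^2/2\rfloor$ and $2\lfloor n^2/3\rfloor$; your bounds $N(n)\le n^2/2$ and $2D(n)\ge 2(n^2-1)/3$ are slightly weaker but suffice, and your explicit $C_p$-bounds for the $n\to\infty$ limit fill in details the paper only sketches with the remark that $\rho\in\Theta(p^{-n^2/6})$.

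One incidental point: your $n=2$ computation gives $\rho=\dfrac{p^2+5p+5}{(2p+3)^2}$, whereas the paper's Remark following \Cref{thm:ele} records $\dfrac{p^2+3p+1}{(2p+1)^2}$. Your expression is the correct one (there are $2p+3$ nontrivial relations in $\Sub(C_p^2)$, not $2p+1$), so this appears to be a typo in the paper; either way the limit is $1/4$.
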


Recall that $\lim_{p \to 1} \qbinom{n}{k}{p} = \binom{n}{k}$. Using this observation we observe the following relation between the codensity of elementary abelian $p$-groups and the squarefree cyclic groups:

\begin{corollary}
Let $n \geq 1$. Then
\[
\lim_{p \to 1} \rho(\Tr(\Sub(C_p^n))) = \rho(\Tr([1]^n)).
\]
\end{corollary}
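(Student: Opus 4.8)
The plan is to evaluate the limit $p \to 1$ directly in the closed formula of \Cref{thm:ele}, using the stated fact that $\lim_{p \to 1} \qbinom{n}{k}{p} = \binom{n}{k}$, and then to recognize the resulting expression as the formula for $\rho(\Tr([1]^n))$ proved earlier in this section. The key point enabling this is that the right-hand side of \Cref{thm:ele} is a ratio of two polynomials in $p$ (each Gaussian binomial is a polynomial in $p$, and each $a_d$ is a finite sum of such), whose denominator is the square of $\sum_{i=0}^{n-1}\qbinom{n}{i}{p}(a_{n-i}-1)$. Since, as computed below, this tends to $3^n - 2^n \neq 0$, the whole expression is a rational function of $p$ regular at $p=1$, so the limit is obtained simply by specializing every Gaussian binomial at $p=1$, term by term.

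Under this specialization one has $a_d \to \sum_{i=0}^{d}\binom{d}{i} = 2^d$. First I would handle the denominator: it tends to $\bigl(\sum_{i=0}^{n-1}\binom{n}{i}(2^{n-i}-1)\bigr)^2$, and since the $i=n$ term $\binom{n}{n}(2^{0}-1)=0$ contributes nothing, this inner sum equals $\sum_{i=0}^{n}\binom{n}{i}2^{n-i} - \sum_{i=0}^{n}\binom{n}{i} = 3^n - 2^n$ by the binomial theorem. Thus the denominator converges to $(3^n-2^n)^2$, matching the denominator of $\rho(\Tr([1]^n))$.

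Next I would treat the numerator, which tends to $\sum_{i=0}^{n-1}\sum_{j=1}^{n-i}\sum_{k=0}^{n-i-j}\binom{n}{i}\binom{n-i}{j}\binom{n-i-j}{k}(2^{j+k}-2^{k})$. Rewriting $\binom{n}{i}\binom{n-i}{j}\binom{n-i-j}{k} = \binom{n}{i,j,k,\,n-i-j-k}$ and noting that the summand vanishes whenever $j=0$, one may freely extend the index set to all $i,j,k \geq 0$ with $i+j+k \leq n$, i.e.\ to all compositions $i+j+k+\ell = n$. Two applications of the multinomial theorem, with weights $(1,2,2,1)$ and $(1,1,2,1)$ on the four parts $(i,j,k,\ell)$, then give $\sum \binom{n}{i,j,k,\ell}2^{j+k} = 6^n$ and $\sum \binom{n}{i,j,k,\ell}2^{k} = 5^n$, so the numerator converges to $6^n - 5^n$. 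Combining the two computations with the earlier proposition identifying $\rho(\Tr([1]^n)) = (6^n-5^n)/(3^n-2^n)^2$ yields the claim.

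\textbf{Main obstacle.} There is essentially no serious difficulty; the only points requiring care are (i) justifying that the limit may be taken term by term, which is immediate from the rational-function-regular-at-$p=1$ observation, and (ii) matching the index ranges, which is handled by the vanishing-summand remark. As a sanity check (and an alternative to the two multinomial computations), one may observe that the resulting sums reproduce, case by case, the bit-by-bit bookkeeping in the proof for $[1]^k$ (cf.\ \Cref{tab:bits}); but the direct evaluation above is short enough that I would simply carry it out.
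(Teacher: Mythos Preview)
Your proposal is correct and follows exactly the approach the paper intends: the paper gives no explicit proof of this corollary, merely prefacing it with the remark that $\lim_{p\to 1}\qbinom{n}{k}{p}=\binom{n}{k}$ and leaving the reader to specialize the formula of \Cref{thm:ele}. You have carried out precisely that specialization, including the multinomial evaluations showing the numerator and denominator limit to $6^n-5^n$ and $(3^n-2^n)^2$ respectively, so your write-up in fact supplies the details the paper omits.
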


\begin{remark}
Again, through tedious summation techniques, or the use of Mathematica one can evaluate the polynomials in \cref{thm:ele} for small values of $k$:
\begin{align*}
\rho(\Tr(\Sub(C_p^2))) &= \dfrac{1 + 3p + p^2}{(2p+1)^2} ,\\
\rho(\Tr(\Sub(C_p^3))) &= \dfrac{15+24p+30p^2+16p^3+6p^4}{(6+6p+6p^2+p^3)^2},\\
\rho(\Tr(\Sub(C_p^4))) &= \dfrac{35+70p+124p^2+150p^3+135p^4+95p^5+46p^6+15p^7+p^8}{(10+12p+17p^2+15p^3+8p^4+3p^5)^2}.
\end{align*}
\end{remark}

We can consider the highest coefficient of the numerator and denominator of the codensity appearing in \cref{thm:ele} to provide an economical approximation of it:

\begin{proposition}
    For fixed $n > 0$, $\rho(\Tr(\Sub(C_p^n)))$ is a rational function in $p$ whose numerator has degree $\floor{n^2/2}$ and whose denominator has degree $2\floor{n^2/3}$.
\end{proposition}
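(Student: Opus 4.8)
The plan is to read off the degrees directly from the $q$-binomial formula in \Cref{thm:ele}, using the fact that $\qbinom{n}{k}{p}$ is a polynomial in $p$ of degree $k(n-k)$. First I would record the degree of the auxiliary function $a_d = \sum_{i=0}^d \qbinom{d}{i}{p}$: each summand has degree $i(d-i)$, which is maximized at $i = \floor{d/2}$, so $\deg a_d = \floor{d/2}\lceil d/2 \rceil = \floor{d^2/4}$. The leading term comes from a single middle binomial (or, for $d$ even, is still a single term), so $a_d - 1$ and $a_d - a_k$ (for $k < d$) both have this same degree $\floor{d^2/4}$ when $d \geq 1$.

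For the denominator, the relevant quantity is $\sum_{i=0}^{n-1} \qbinom{n}{i}{p}(a_{n-i} - 1)$. The $i$-th summand has degree $i(n-i) + \floor{(n-i)^2/4}$; writing $m = n - i$ this is $(n-m)m + \floor{m^2/4}$, and I would check by a short calculation (treating $m$ as a real variable, optimizing the smooth part $nm - m^2 + m^2/4 = nm - \tfrac34 m^2$, which peaks at $m = 2n/3$) that the maximum over $1 \leq m \leq n$ of this integer-valued expression is $\floor{n^2/3}$, attained near $m \approx 2n/3$. Since the leading coefficients are positive there is no cancellation, so the denominator of $\rho$, being the square of this sum, has degree $2\floor{n^2/3}$. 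One subtlety I would address: when two values of $i$ tie for the maximal degree, their leading coefficients still add (not cancel), so the degree is genuinely achieved; this needs a sentence but no real work.

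For the numerator, $\sum_{i,j,k} \qbinom{n}{i}{p}\qbinom{n-i}{j}{p}\qbinom{n-i-j}{k}{p}(a_{j+k} - a_k)$, I would bound the degree of a single term by $i(n-i-j) + j(n-i-j) + k(n-i-j-k) + \floor{(j+k)^2/4}$ — wait, more carefully: $\deg \qbinom{n}{i}{p} = i(n-i)$, $\deg\qbinom{n-i}{j}{p} = j(n-i-j)$, $\deg\qbinom{n-i-j}{k}{p} = k(n-i-j-k)$, and $\deg(a_{j+k}-a_k) = \floor{(j+k)^2/4}$ provided $j + k \geq 1$ (which holds since $j \geq 1$). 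I would then maximize the sum of these four quantities over the simplex $i \geq 0$, $j \geq 1$, $k \geq 0$, $i + j + k \leq n$, again first relaxing to real variables and using calculus/Lagrange multipliers, then checking the integer optimum and verifying the leading coefficients do not cancel. The claim is that this maximum equals $\floor{n^2/2}$.

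The main obstacle is precisely this last optimization: it is a constrained maximization of a piecewise-quadratic (because of the floor functions) in three variables, and pinning down the exact integer maximum — rather than just the real relaxation — requires care about boundary cases and about the floor terms. I expect the optimum to occur on the boundary $i + j + k = n$ (since increasing any variable toward using up the budget only helps) and with $k = 0$ or a small value, reducing to a one- or two-parameter problem; a clean way to finish is to substitute the conjectured optimizer, evaluate all four degrees, confirm the total is $\floor{n^2/2}$, and separately argue optimality by a convexity/exchange argument. Once the three degree computations are in hand, the degree of the rational function $\rho$ is (numerator degree) $-$ (denominator degree), but the proposition only asserts the individual degrees of the numerator and denominator of the formula as written, so no further subtraction is needed — I would just state the two results and note that the formula in \Cref{thm:ele} is already in the form of a ratio of these two polynomials (after clearing the common positive leading coefficients, which do not vanish).
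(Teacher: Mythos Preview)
Your proposal is correct and follows essentially the same route as the paper: both arguments read off the term-by-term degrees from the formula in \Cref{thm:ele} using $\deg_p\qbinom{n}{k}{p}=k(n-k)$, reduce to maximizing $i(n-i)+\floor{(n-i)^2/4}$ for the denominator and $i(n-i)+j(n-i-j)+k(n-i-j-k)+\floor{(j+k)^2/4}$ for the numerator, and then pin down the integer maxima. The only cosmetic difference is that the paper dispatches the integer optimizations by case-checking $n$ modulo $3$ and modulo $4$ respectively, whereas you propose a real relaxation followed by checking nearby integers; your explicit remark that positivity of the Gaussian-binomial coefficients prevents cancellation among tied top-degree terms is a point the paper uses silently.
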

\begin{proof}
    It is well-known that $\qbinom{n}{k}{p}$ is a polynomial of degree $k(n-k)$ in $p$. Consequently, for $n > 0$, we have
    \[\deg_p (a_n-1) = \deg_p \sum_{k=1}^{n} \qbinom{n}{k}{p} = \max_{1 \leq k \leq n} k(n-k) = \floor*{\frac{n}{2}}\ceil*{\frac{n}{2}} = \floor{n^2/4}.\]
    Thus, for $n > 0$, we have
    \begin{multline*}
        \deg_p \left(\sum_{i=0}^{n-1} \qbinom{n}{i}{p} (a_{n-i}-1)\right)^2 = 2\deg_p \sum_{i=0}^{n-1} \qbinom{n}{i}{p} \sum_{k=1}^{n-i} \qbinom{n-i}{k}{p} = 2 \max_{0 \leq i \leq n-1} \left(i(n-i) + \floor*{(n-i)^2/4}\right)\\
        = 2 \floor*{\max_{0 \leq i \leq n-1} (i(n-i) + (n-i)^2/4)} = 2 \floor*{\frac{1}{4} \max_{0 \leq i \leq n-1} (n-i)(n+3i)} \overset{(\star)}{=} 2 \floor*{\frac{1}{4} \floor*{4n^2/3}} = 2 \floor{n^2/3}.
    \end{multline*}
    where $(\star)$ requires some careful case-checking modulo $3$. This is the desired result for the degree of the denominator; for the degree of the numerator we employ the same strategy:
    \begin{multline*}
        \deg_p \sum_{i=0}^{n-1} \sum_{j=1}^{n-i} \sum_{k=0}^{n-i-j} \qbinom{n}{i}{p} \qbinom{n-i}{j}{p} \qbinom{n-i-j}{k}{p} (a_{j+k} - a_k) \\
        = \max_{\substack{0 \leq i \leq n-1 \\ 1 \leq j \leq n-i \\ 0 \leq k \leq n - i - j}} \left(i(n-i) + j(n-i-j) + k(n-i-j-k) + \floor{(j+k)^2/4}\right)
        \overset{(\star\star)}{=} \floor{n^2/2}.
    \end{multline*}
    where $(\star\star)$ requires some careful case-checking modulo $4$. This completes the proof.\qedhere

 \end{proof}

 With more careful analysis one can show that $\rho(\Tr(\Sub(C_p^n))) \in \Theta(p^{-n^2/6})$ (independently in either $p$ or $n$), though we will not do so here.

\begin{remark}
    We record a somewhat amusing link between the above result and integral points on elliptic curves. In \cite{stange2016integral} Stange defines a family of integer sequences $R_n(a,\ell)$ called \emph{elliptic troublemaker sequences}. Our numerator and denominator degrees correspond to $R_n(2,4)$ and $R_n(2,6)$ respectively.
\end{remark}

\begin{remark}
    While between this section and \cref{sec:cyclic} we have computed the (co)density for both families of abelian groups, we are unable to provide a result for an arbitrary abelian group. This is due to the fact that $\Tr(\Sub(G \times H))$ bears little resemblance to $\Tr(\Sub(G))$ or $\Tr(\Sub(H))$ in general. For an explicit example we highlight that $|\Tr(\Sub(C_p))| = 2$ while $|\Tr(\Sub(C_p \times C_p))| = 2^{p+2} + p + 1$ by \cite{bao2023transfersystemsrankelementary}.
\end{remark}

\section{(Co)saturated Transfer Systems}\label{sec:sat}

We will conclude this paper with some considerations of particular classes of transfer systems:

\begin{definition}
    Let $L$ be a $G$-lattice, and let $\mathsf{T} \in \Tr(L)$. Then $\mathsf{T}$ is:
    \begin{itemize}
        \item \emph{Saturated} if it satisfies the two-out-of-three property.
        \item \emph{Cosaturated} if it is of the form $\bigvee_{x \in X} \floor{ x \to \top }$ for some subset $X \subseteq L$, where $\top$ denotes the maximum element of $L$.
    \end{itemize}
    We will denote by $\Sat(L)$ (resp., $\coSat(L)$) the set of saturated (resp., cosaturated) transfer systems on $L$.
\end{definition}

The relevance of these subclassses of transfer systems are reflected in the origins of homotopical combinatorics, that is, in equivariant homotopy theory. The saturated transfer systems are related to \emph{linear isometries} operads while the cosaturated transfer systems correspond to \emph{disc-like} operads.

Just as the collection of all transfer systems forms a complete lattice, so do these subclasses. As such, we can consider the formal context associated to them, which we will see is far more technical than the formal context for all transfer systems. We begin by observing that (as suggested by their names) these classes of transfer systems enjoy a duality:

\begin{proposition}[{\cite[Corollary 4.2]{bose2025combinatoricsfactorizationsystemslattices}}]\label{prop:satcosatdual}
    Let $L$ be a $G$-lattice. Then there is a bijection
    \[
    \Sat(L)^\mathrm{op} \cong  \coSat(L^{\mathrm{op}}).
    \]
    In particular, if $L$ is self-dual (e.g., $L = \Sub(G)$ for $G$ an abelian group) then $\abs{\Sat(L)} = \abs{\coSat(L)}$.
\end{proposition}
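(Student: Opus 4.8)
The ``in particular'' clause is formal: if $L \cong L^{\mathrm{op}}$ as lattices, then $\coSat(L) \cong \coSat(L^{\mathrm{op}}) \cong \Sat(L)^{\mathrm{op}}$, and a lattice and its opposite have the same cardinality. So the task is to produce the order-reversing bijection $\Sat(L)^{\mathrm{op}} \cong \coSat(L^{\mathrm{op}})$, and I would do this by feeding the right-hand side through two dualities already available in the paper. First, transport $\coSat(L^{\mathrm{op}})$ across arrow reversal: by \cref{warn:trL=coTrLop}, reversing all arrows is an order isomorphism $\Tr(L^{\mathrm{op}}) \xrightarrow{\sim} \coTr(L)$ of complete lattices; under it $\top_{L^{\mathrm{op}}} = \bot_L$, each generator $\floor{x \to \top}$ of $\coSat(L^{\mathrm{op}})$ becomes the cotransfer system $\ceil{\bot \to x}$ on $L$, and joins go to joins. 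Hence $\coSat(L^{\mathrm{op}})$ is carried isomorphically onto the collection
\[
    \mathcal{D}(L) \coloneq \Bigl\{\, \textstyle\bigvee_{x \in X} \ceil{\bot \to x} \ :\ X \subseteq L \,\Bigr\} \ \subseteq\ \coTr(L)
\]
of cotransfer systems generated by relations out of the bottom element. Second, \cref{lem:cotransfer_duality} says $\mathsf{C} \mapsto \mathsf{C}^{\boxslash}$ is an order isomorphism $\coTr(L) \xrightarrow{\sim} \Tr(L)^{\mathrm{op}}$. Composing, the whole statement reduces to the single claim
\begin{equation}\label{eq:keyclaim_plan}
    \mathsf{C} \mapsto \mathsf{C}^{\boxslash} \text{ restricts to a bijection } \mathcal{D}(L) \xrightarrow{\ \sim\ } \Sat(L),
\end{equation}
since then $\coSat(L^{\mathrm{op}}) \cong \mathcal{D}(L) \cong \Sat(L)^{\mathrm{op}}$, the last isomorphism being the (order-reversing) restriction of $(-)^{\boxslash}$.

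To prove \eqref{eq:keyclaim_plan} I would first record two concrete reformulations. For a transfer system $\mathsf{T}$, two of the three two-out-of-three implications hold automatically: $f,g \in \mathsf{T} \Rightarrow gf \in \mathsf{T}$ by transitivity of $\mathsf{T}$, and $g, gf \in \mathsf{T} \Rightarrow f \in \mathsf{T}$ by the restriction axiom (with $x=a$, $y=c$, $x'=b$ in the notation of the definition). So $\mathsf{T} \in \Sat(L)$ if and only if, for all $a \leq b \leq c$ in $L$, the conditions $a \to b \in \mathsf{T}$ and $a \to c \in \mathsf{T}$ force $b \to c \in \mathsf{T}$. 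On the other side, unwinding the reflexive--transitive closure in \cref{cons:rubin} (with trivial $G$-action for the generating arrows, then taking joins) shows that a cotransfer system lies in $\mathcal{D}(L)$ exactly when each of its arrows $c \to d$ admits a factorization $d = \bigl(\bigvee_i x_i\bigr) \vee c$ with each $\bot \to x_i$ in the system. Finally, compute $\mathsf{C}^{\boxslash}$ explicitly in the manner of the proof of \cref{thm:relation}: $a \to b \in \mathsf{C}^{\boxslash}$ if and only if, for every $(c \to d) \in \mathsf{C}$ with $c \leq a$ and $d \leq b$, one has $d \leq a$. With these three descriptions in hand, \eqref{eq:keyclaim_plan} becomes a finite diagram chase in both directions.

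The main obstacle is the surjectivity half of \eqref{eq:keyclaim_plan}: showing that \emph{every} saturated transfer system is the right-lifting class of a cotransfer system generated out of $\bot$. The forward direction --- that $\mathsf{C} \in \mathcal{D}(L)$ implies $\mathsf{C}^{\boxslash}$ meets the one nontrivial two-out-of-three condition above --- should be a routine lifting argument. For the converse I expect the cleanest route is the contrapositive: given a cotransfer system $\mathsf{C}$ not generated by its relations out of $\bot$, select an arrow of $\mathsf{C}$ witnessing this and build from it an explicit triple $a \leq b \leq c$ with $a \to b, a \to c \in \mathsf{C}^{\boxslash}$ but $b \to c \notin \mathsf{C}^{\boxslash}$. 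Before grinding through the general computation I would verify the correspondence by hand on the small self-dual lattices $L = [2]$ and $L = [1]^2$, where $\Sat$ and $\coSat$ can be enumerated directly and one can confirm that $(-)^{\boxslash}$ points the right way; and since the statement is exactly \cite[Corollary 4.2]{bose2025combinatoricsfactorizationsystemslattices}, that reference may simply be cited if the hands-on argument turns out more cumbersome than advertised.
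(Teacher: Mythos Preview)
The paper does not prove this proposition at all --- it simply cites \cite[Corollary 4.2]{bose2025combinatoricsfactorizationsystemslattices}. Your proposal therefore goes well beyond the paper by sketching an actual argument, and your reduction is sound: transporting $\coSat(L^{\mathrm{op}})$ across arrow reversal to the sublattice $\mathcal{D}(L) \subseteq \coTr(L)$ of cotransfer systems generated from $\bot$, and then applying \cref{lem:cotransfer_duality}, correctly reduces everything to your claim \eqref{eq:keyclaim_plan}. The forward half is exactly the routine lifting you describe: from $\ceil{\bot \to x}^\boxslash = \{a \to b : \forall g,\ g\cdot x \leq b \Rightarrow g\cdot x \leq a\}$ one sees that $a \to c \in \mathsf{C}^\boxslash$ with $a \leq b \leq c$ already forces $b \to c \in \mathsf{C}^\boxslash$, giving the one nontrivial two-out-of-three implication.

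The gap you flag --- surjectivity onto $\Sat(L)$ --- is real, and your contrapositive plan is workable but awkward; there is a much cleaner \emph{direct} argument. Given $\mathsf{T} \in \Sat(L)$, set $\mathsf{C} = {}^{\boxslash}\mathsf{T}$ and take any $p \to q \in \mathsf{C}$. Using the weak factorization system $(\mathsf{C},\mathsf{T})$ underlying \cref{lem:cotransfer_duality}, factor $\bot \to q$ as $\bot \xrightarrow{\mathsf{C}} w \xrightarrow{\mathsf{T}} q$. Restriction of $w \to q \in \mathsf{T}$ along $p \vee w \leq q$ gives $w \to p \vee w \in \mathsf{T}$, and then saturation of $\mathsf{T}$ applied to $w \leq p \vee w \leq q$ yields $p \vee w \to q \in \mathsf{T}$. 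Now $p \to q \in {}^{\boxslash}\mathsf{T}$ lifts against this, and since $p \leq p \vee w$ and $q \leq q$ we conclude $q \leq p \vee w$, hence $q = p \vee w$ with $w \in \mathsf{C}_\bot$. Thus every arrow of $\mathsf{C}$ has the required form, so $\mathsf{C} \in \mathcal{D}(L)$, completing \eqref{eq:keyclaim_plan}. This avoids the delicate witness-selection your contrapositive would need when $\mathsf{C}_\bot$ is nontrivial below $q$.
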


\begin{warning}
While $\Sat(L)$ is a complete lattice, it is \emph{not} necessarily a sublattice of $\Tr(L)$. To see this, it suffices to see that the join of two saturated transfer systems in $\Tr(L)$ need not be saturated:
\end{warning}

\begin{example}
    Let $p$ and $q$ be distinct primes. Then the two saturated transfer systems $\mathsf{X}, \mathsf{Y} \in \Sat(\Sub(C_{pq}))$ shown below have non-saturated join in $\mathsf{Tr}(\Sub(C_{pq}))$:
    \[
    \mathsf{X} = \begin{tikzcd}
	C_p & C_{pq} \\
	e & C_q
	\arrow[from=1-1, to=1-2]
	\arrow[from=2-1, to=2-2]
\end{tikzcd}
\qquad\qquad
\mathsf{Y} = \begin{tikzcd}
	C_p & C_{pq} \\
	e & C_q
	\arrow[from=2-1, to=1-1]
\end{tikzcd}
\qquad\qquad
\mathsf{X} \vee \mathsf{Y} = \begin{tikzcd}
	C_p & C_{pq} \\
	e & C_q
	\arrow[from=1-1, to=1-2]
	\arrow[from=2-1, to=1-1]
	\arrow[from=2-1, to=1-2]
	\arrow[from=2-1, to=2-2]
\end{tikzcd}
    \]
    The join in $\Sat(\Sub(C_{pq}))$ would instead be computed by taking the above join and then closing up under two-out-of-three. That is, the join in this case would be the complete transfer system.
\end{example}

\begin{warning}
In \cref{rem:trim} we discussed how $\Tr(L)$ is a trim lattice, and thus, in particular the number of join-irreducible elements was equal to the number of meet-irreducible elements. This once again breaks down as soon as we move to (co)saturated transfer systems, where these numbers will differ:
\end{warning}

\begin{example}
    Let $p$ and $q$ be distinct primes. Then one can check that in $\Sat(\Sub(C_{pq}))$ there are four join-irreducible elements, but only three meet-irreducible elements.
\end{example}

Having outlined the differences between all transfer systems and (co)saturated ones insofar as we need to discuss contexts, we move towards some partial results regarding the formal context of $\Sat(L)$ and $\coSat(L)$. We begin with the join-irreducibles of the latter given their simple description.

\begin{lemma}\label{lem:onlysourceG}
    Let $L$ be a $G$-lattice with maximum element $\top$, and let $x \in L$. Then
    \[\{y \in L : y \to \top \in \floor{x \to \top}\} = \left\{\bigwedge_{g \in S} (g \cdot x) : S \subseteq G\right\}.\]
\end{lemma}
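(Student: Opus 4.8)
The plan is to invoke Rubin's explicit generating description of $\floor{x\to\top}$ and then unwind the resulting reflexive--transitive closure. By \cite[Theorem A.2]{rubin}, $\floor{x\to\top}$ is the reflexive--transitive closure of the set $\{(g\cdot x)\wedge z \to z : g\in G,\ z\le g\cdot\top\}$. The first observation I would make is that since $\top$ is the maximum element and $G$ acts by lattice automorphisms we have $g\cdot\top=\top$ for every $g\in G$, so the constraint $z\le g\cdot\top$ is vacuous and $\floor{x\to\top}$ is simply the reflexive--transitive closure of $\{(g\cdot x)\wedge z \to z : g\in G,\ z\in L\}$; in particular the generators whose target is $\top$ are exactly the relations $g\cdot x\to\top$.

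For the inclusion $\supseteq$, given a subset $S\subseteq G$, I would produce an explicit composite witnessing $\bigwedge_{g\in S}(g\cdot x)\to\top\in\floor{x\to\top}$. If $S=\varnothing$ then $\bigwedge_{g\in S}(g\cdot x)=\top$ and $\top\to\top\in\floor{x\to\top}$ by reflexivity. Otherwise enumerate $S=\{g_1,\dots,g_n\}$ and set $w_i=\bigwedge_{j>i}(g_j\cdot x)$, so that $w_n=\top$ and $w_0=\bigwedge_{g\in S}(g\cdot x)$; each step $w_{i-1}=(g_i\cdot x)\wedge w_i\to w_i$ is one of the generators above, and composing them gives $w_0\to\top$.

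For the inclusion $\subseteq$, suppose $y\to\top\in\floor{x\to\top}$ and choose a witnessing chain $y=w_0\to w_1\to\cdots\to w_n=\top$ in which every step is either reflexive ($w_{i-1}=w_i$) or a generator, i.e.\ $w_{i-1}=(g_i\cdot x)\wedge w_i$ for some $g_i\in G$. I would run a descending induction on $i$, from $n$ down to $0$, proving that each $w_i$ has the form $\bigwedge_{g\in S_i}(g\cdot x)$ for some $S_i\subseteq G$: the base case $w_n=\top$ corresponds to $S_n=\varnothing$, a reflexive step leaves $S_i$ unchanged, and a generator step passes from $S_i$ to $S_i\cup\{g_i\}$ via $(g_i\cdot x)\wedge\bigwedge_{g\in S_i}(g\cdot x)=\bigwedge_{g\in S_i\cup\{g_i\}}(g\cdot x)$. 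Reading this off at $i=0$ gives $y=w_0=\bigwedge_{g\in S_0}(g\cdot x)$, completing the argument.

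I do not anticipate a serious obstacle: the content is essentially bookkeeping with the reflexive--transitive closure. The one point that must be handled carefully is the reduction $g\cdot\top=\top$, since it is precisely this that pins down the generators with target $\top$ and thereby forces the intermediate vertices of any witnessing chain to be partial meets of the $G$-orbit of $x$ — which is what makes the backwards induction close up.
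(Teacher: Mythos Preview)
Your argument is correct. The paper's proof is shorter because it cites a stronger result of Rubin, namely \cite[Proposition A.7]{rubin}, which already gives the closed form
\[
\floor{x \to \top} = \left\{ z \wedge \bigwedge_{g \in S} (g \cdot x) \to z : z \in L,\ S \subseteq G \right\},
\]
from which the lemma follows immediately by setting $z=\top$. You instead start from the weaker \cite[Theorem A.2]{rubin} (the description as a reflexive--transitive closure of one-step generators) and compute the closure by hand via your descending induction. In effect you are re-deriving the relevant special case of Proposition A.7 from Theorem A.2; this costs a few more lines but has the advantage of being self-contained, since Theorem A.2 is the form of Rubin's result already used elsewhere in the paper.
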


\begin{proof}
    This follows from Rubin's description of $\floor{x \to \top}$ in \cite[Proposition A.7]{rubin}:
    \[
    \floor{x \to \top} = \left\{ z \wedge \bigwedge_{g \in S} (g \cdot x) \to z : z \in L, S \subseteq G \right\}.\qedhere
    \]
\end{proof}

\begin{proposition}
    Let $L$ be a $G$-lattice with maximum element $\top$. Then the set of join-irreducible elements of $\coSat(L)$ is equal to the set of transfer systems generated by a single non-identity relation with target $\top$, i.e.,
    \[
    J(\coSat(L)) = \{ \floor{x \to \top} : x \neq \top\}.
    \]
    Moreover, there is a bijection between $J(\coSat(L))$ and the set $(L \setminus \{\top\})/\!/G$. In particular, the number of join-irreducible cosaturated transfer systems on $L$ is equal to $\abs{L/\!/G}-1$.
\end{proposition}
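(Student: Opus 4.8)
The plan is to identify $J(\coSat(L))$ by first showing every join-irreducible cosaturated transfer system has the stated form, and then showing each such transfer system is indeed join-irreducible. For the first direction, suppose $\mathsf{T} \in J(\coSat(L))$. By definition of $\coSat(L)$, $\mathsf{T} = \bigvee_{x \in X} \floor{x \to \top}$ for some $X \subseteq L$; here the join is taken in $\coSat(L)$. One checks that $\coSat(L)$ is closed under the joins appearing in $\Tr(L)$ (a join in $\Tr(L)$ of transfer systems of the form $\floor{x \to \top}$ is again generated by relations with target $\top$, since $\floor{x \to y} \subseteq \floor{x \to \top}$ and by Rubin's description the generating relations all have target $\top$ when the generators do — formally, this uses that $\bigcup_{x \in X} \floor{x \to \top}$ is $G$-closed and its reflexive-transitive closure is still supported on relations with target $\top$), so the join in $\coSat(L)$ agrees with the join in $\Tr(L)$. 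Since $\mathsf{T}$ is join-irreducible in $\coSat(L)$ and each $\floor{x \to \top} \in \coSat(L)$, we conclude $\mathsf{T} = \floor{x \to \top}$ for some $x \in X$. If $x = \top$ then $\mathsf{T}$ is the trivial transfer system, contradicting join-irreducibility, so $x \neq \top$.

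For the converse, fix $x \neq \top$ and suppose $\floor{x \to \top} = \bigvee \mathcal{T}$ for some $\mathcal{T} \subseteq \coSat(L)$, with the join again computed as in $\Tr(L)$ by the remark above. Since $\floor{x \to \top} \in J(\Tr(L))$ by \cref{prop:join_irr}, and $\mathcal{T} \subseteq \coSat(L) \subseteq \Tr(L)$, join-irreducibility of $\floor{x \to \top}$ in the ambient lattice $\Tr(L)$ forces $\floor{x \to \top} \in \mathcal{T}$. Hence $\floor{x \to \top}$ is join-irreducible in $\coSat(L)$, establishing the displayed equality $J(\coSat(L)) = \{\floor{x \to \top} : x \neq \top\}$.

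It remains to count. By \cref{lem:genconj} (applied with $y = y' = \top$, or rather its evident specialization, using \cref{lem:onlysourceG} to see that $\floor{x \to \top}$ determines the $G$-orbit of $x$: the set of sources of relations into $\top$ is $\{\bigwedge_{g \in S}(g\cdot x) : S \subseteq G\}$, whose minimal elements under the subgroup structure pin down the orbit $G \cdot x$, since taking $S = \{e\}$ recovers $x$ itself and any $g \cdot x$ arises from $S = \{g\}$), we have $\floor{x \to \top} = \floor{x' \to \top}$ if and only if $x' = g \cdot x$ for some $g \in G$. Therefore $J(\coSat(L))$ is in bijection with $(L \setminus \{\top\})/\!/G$, and since $\{\top\}$ is a single $G$-orbit, $\abs{J(\coSat(L))} = \abs{L/\!/G} - 1$.

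The main obstacle I anticipate is the careful verification that joins in $\coSat(L)$ of the generators $\floor{x \to \top}$ coincide with joins in $\Tr(L)$ — i.e., that $\coSat(L)$ is closed under the relevant ambient joins — since without this the reduction from an arbitrary element of $\coSat(L)$ to a single generator, and the transfer of join-irreducibility from $\Tr(L)$ to $\coSat(L)$, both fail. This should follow cleanly from Rubin's explicit description of $\floor{x \to \top}$ in \cite[Proposition A.7]{rubin} as recalled in the proof of \cref{lem:onlysourceG}, but it is the step that requires genuine care rather than formal manipulation.
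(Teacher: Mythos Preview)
Your proposal is correct and takes a cleaner route than the paper for the converse inclusion. The paper argues directly: given $\floor{x \to \top} = \bigvee \mathcal{T}$ with each $\mathsf{T} \in \mathcal{T}$ of the form $\floor{y \to \top}$, it unpacks Rubin's description of the join to extract a chain $x \leq a \leq g' \cdot y \leq (g'g) \cdot x$ and then uses the finite-group trick ($x \leq h \cdot x \Rightarrow x = h \cdot x$) to conclude that $x$ and $y$ are conjugate. You instead observe that $\coSat(L)$ is closed under $\Tr(L)$-joins, so join-irreducibility of $\floor{x \to \top}$ in $\Tr(L)$ (already established in \cref{prop:join_irr}) descends to $\coSat(L)$. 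This is shorter and avoids repeating the Rubin-style chase; note the contrast with $\Sat(L)$, where the paper explicitly warns that joins do \emph{not} agree with those in $\Tr(L)$, so your shortcut is genuinely exploiting something special about the cosaturated side.

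Two small cleanups. First, your parenthetical justification for join-closure is muddled: the claim that the closure is ``still supported on relations with target $\top$'' is false ($\floor{x \to \top}$ contains many relations $x \wedge z \to z$ with smaller target), and the remark about $\floor{x \to y} \subseteq \floor{x \to \top}$ is irrelevant. The correct one-line argument is that a $\Tr(L)$-join of elements $\bigvee_{x \in X_i} \floor{x \to \top}$ equals $\bigvee_{x \in \bigcup_i X_i} \floor{x \to \top}$, hence lies in $\coSat(L)$ by definition. Second, for the bijection with $(L \setminus \{\top\})/\!/G$ you can simply invoke \cref{lem:genconj} with $y = y' = \top$; the detour through \cref{lem:onlysourceG} is unnecessary.
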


\begin{proof}
    Since every cosaturated transfer system is a join of those of the form $\floor{x \to \top}$ with $x \neq \top$, it is clear that $J(\coSat(L)) \subseteq \{ \floor{x \to \top} : x \neq \top\}$.

    In the other direction, suppose $\floor{x \to \top} = \bigvee \mathcal{T}$ for some $\mathcal{T} \subseteq J(\coSat(L))$, where $x \neq \top$. By \cite[Theorem A.2]{rubin}, we have
    \[x \to \top \in \left(\bigcup_{\mathsf{T} \in \mathcal{T}} \{z \wedge a \to z : a \to b \in \mathsf{T}, z \leq b\}\right)^\circ,\]
    so in particular there is some $\mathsf{T} \in \mathcal{T}$, some $a \to b \in \mathsf{T}$, and some $z \leq b$ such that $z \wedge a = x \neq z$. Since $\mathsf{T} \in J(\coSat(L))$, we have $\mathsf{T} = \floor{y \to \top}$ for some $y \neq \top$. Then since $\mathsf{T} \subseteq \floor{x \to \top}$, we have $y = \bigwedge_{g \in S} (g \cdot x)$ for some $S \subseteq G$. Since $y \neq \top$, $S$ is nonempty, so we may pick some $g \in G$ such that $y \leq g \cdot x$. Furthermore, $a \to b \in \mathsf{T} = \floor{y \to \top}$ yields $a = b \wedge \bigwedge_{g' \in S'} (g' \cdot y)$ for some $S' \subseteq G$. Since $z \wedge a \neq z \leq b$, we must have $a \neq b$, so $S'$ is nonempty. Thus, we may pick some $g' \in G$ such that $a \leq g' \cdot y$. Now we have
    \[x = z \wedge a \leq a \leq g' \cdot y \leq g' \cdot (g \cdot x) = (g'g) \cdot x.\]
    This implies that $x = (g'g) \cdot x$ and thus that $x = g' \cdot y$. We conclude that
    \[\floor{x \to \top} = \floor{g' \cdot y \to \top} = \floor{y \to \top} = \mathsf{T} \in \mathcal{T},\]
    as desired.\qedhere
\end{proof}

Using \cref{prop:satcosatdual} we are afforded an immediate description of the meet-irreducible elements of $\Sat(L)$:

\begin{corollary}\label{cor:meetirrofsat}
    Let $L$ be a $G$-lattice with minimum element $\bot$. Then the set of meet-irreducible elements of $\Sat(L)$ is equal to the set of right lifts of cotransfer systems generated by a single non-identity relation with source $\bot$, i.e.,
    \[
    M(\Sat(L)) = \{ \ceil{\bot \to x}^\boxslash : x \neq \bot\}
    \]
    Moreover, there is a bijection between $M(\Sat(L))$ and the set $(L\setminus\{\bot\})/\!/G$. In particular, the number of meet-irreducible saturated transfer systems on $G$ is equal to $\abs{L/\!/G}-1$.
\end{corollary}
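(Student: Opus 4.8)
The plan is to deduce this entirely from the preceding proposition (the description of $J(\coSat(L))$) by applying the duality of \Cref{prop:satcosatdual}, together with the general abstract fact that meet-irreducibles of a lattice are the join-irreducibles of the opposite lattice. First I would recall that, by definition, $M(\Sat(L)) = J(\Sat(L)^\mathrm{op})$. By \Cref{prop:satcosatdual}, there is a bijection $\Sat(L)^\mathrm{op} \cong \coSat(L^\mathrm{op})$; since this is an isomorphism of posets (indeed of complete lattices), it carries join-irreducibles to join-irreducibles, so $J(\Sat(L)^\mathrm{op}) \cong J(\coSat(L^\mathrm{op}))$. Now I would apply the just-proved proposition to the $G$-lattice $L^\mathrm{op}$: its maximum element is $\bot$ (the minimum of $L$), so $J(\coSat(L^\mathrm{op})) = \{\floor{x \to \bot}_{L^\mathrm{op}} : x \neq \bot\}$, with a bijection onto $(L^\mathrm{op}\setminus\{\bot\})/\!/G = (L\setminus\{\bot\})/\!/G$, and cardinality $\abs{L/\!/G} - 1$.

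The remaining task is to translate the description of these join-irreducibles of $\coSat(L^\mathrm{op})$ back across the duality into the stated form inside $\Sat(L)$. Here I would invoke \Cref{warn:trL=coTrLop}: a transfer system on $L^\mathrm{op}$ generated by the relation $x \to \bot$ corresponds, after reversing arrows, to the cotransfer system on $L$ generated by $\bot \to x$, i.e., to $\ceil{\bot \to x}$. Chasing through the chain of isomorphisms $\Sat(L)^\mathrm{op} \cong \coSat(L^\mathrm{op}) = \Tr((L^\mathrm{op})^\mathrm{op})$-with-reversed-arrows, together with the lifting-property realization of the $\mathrm{op}$ on transfer systems used in \Cref{lem:cotransfer_duality} and \Cref{prop:meet_irr}, the join-irreducible $\floor{x \to \bot}$ of $\coSat(L^\mathrm{op})$ is sent to the element $\ceil{\bot \to x}^\boxslash$ of $\Sat(L)$. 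This gives exactly $M(\Sat(L)) = \{\ceil{\bot \to x}^\boxslash : x \neq \bot\}$ and the claimed bijection with $(L\setminus\{\bot\})/\!/G$.

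The main obstacle I anticipate is purely bookkeeping: making sure the composite of the three identifications — the abstract $M = J$-of-op, the concrete duality of \Cref{prop:satcosatdual}, and the arrow-reversal convention of \Cref{warn:trL=coTrLop} — genuinely lands on $\ceil{\bot \to x}^\boxslash$ rather than some other presentation of the same element, and that the induced map on orbit sets is the identity on $(L\setminus\{\bot\})/\!/G$ rather than being twisted by $L \mapsto L^\mathrm{op}$ in a way that matters. Since $\ceil{x \to y}$ on $L$ is \emph{defined} as the transfer system on $L^\mathrm{op}$ generated by $x \to y$ with arrows reversed, and $(-)^\boxslash$ is precisely the isomorphism $\coTr(L) \xrightarrow{\sim} \Tr(L)^\mathrm{op}$ of \Cref{lem:cotransfer_duality}, these should fit together cleanly, so once the diagram of identifications is written out the proof is immediate; I would present it as a short paragraph of "apply \Cref{prop:satcosatdual}, then the dual of the previous proposition" with the arrow-reversal spelled out explicitly to avoid sign-type errors.
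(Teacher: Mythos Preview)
Your proposal is correct and is exactly the argument the paper has in mind: the corollary is stated immediately after the proposition on $J(\coSat(L))$ with only the sentence ``Using \cref{prop:satcosatdual} we are afforded an immediate description,'' and your write-up simply unpacks that sentence (dualize to $L^\mathrm{op}$, apply the previous proposition, then transport back via $(-)^\boxslash$). The only caveat is the one you already flag: the paper does not spell out that the bijection of \cref{prop:satcosatdual} is realized by $(-)^\boxslash$ restricted from $\coTr(L)\to\Tr(L)^\mathrm{op}$, so your ``bookkeeping'' paragraph is doing slightly more than the paper makes explicit, but this is the intended reading.
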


We now focus our attention on the special case $L = \Sub(G)$, since these are the lattices of interest in homotopical combinatorics. While \cref{cor:meetirrofsat} provides a formal description of the meet-irreducible saturated transfer systems here, by unwrapping the lifting (as in the proof of \cref{thm:relation}) we obtain a more straightforward understanding of these transfer systems:
\[
    \ceil{1 \to H}^{\boxslash} = \{A \to B : gHg^{-1} \not\leq B \text{ or } gHg^{-1} \leq A \text{ for all } g\in G\}.
\]
For abelian $G$, this is equivalently $\{A \to B : H \notin \Sub(B) \setminus \Sub(A)\}$.

To complete the story we need to move to the more complex task of identifying the join-irreducible elements of $\Sat(\Sub(G))$ (which by \ref{prop:satcosatdual} will give us a description of the meet-irreducible elements of  $\coSat(\Sub(G))$). The classification of join-irreducible transfer systems will be investigated in future work, with the special case of abelian groups being solved in forthcoming work of the second author and S.\ Bernstein.  We end by discussing some of the technicalities that appear.

Recall that a subgroup $H$ is said to \emph{cover} $K$ if $K < H$ and there is no $L$ with $K < L < H$. We denote by $\Cov(\Sub(G))$ the set $\{K \to H : H \text{ covers } K\}$. 

\begin{proposition}\label{prop:satjoins}
    Let $\mathsf{T}$ be a join-irreducible element of $\Sat(\Sub(G))$. Then $\mathsf{T}$ is generated (as a saturated transfer system) by a single element of $\Cov(\Sub(G))$.
\end{proposition}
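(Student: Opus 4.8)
The plan is to start from the fact (established above) that every saturated transfer system is a join of transfer systems of the form $\floor{x \to \top}$, but this decomposition alone does not suffice: joins in $\Sat(\Sub(G))$ are computed by taking the join in $\Tr(\Sub(G))$ and then saturating (closing under two-out-of-three). Instead, I would begin with the observation that a saturated transfer system $\mathsf{T}$, viewed as a subcategory of $\Sub(G)$, is closed under two-out-of-three; in particular if $K \to H \in \mathsf{T}$ and $K \to L \to H$ is a factorization in $\Sub(G)$ lying entirely inside $\mathsf{T}$, then all three arrows are in $\mathsf{T}$. The key consequence is that any relation $K \to H$ in $\mathsf{T}$ can be ``factored'' through a chain $K = K_0 < K_1 < \dots < K_m = H$ of covers, and each edge $K_i \to K_{i+1}$ need not lie in $\mathsf{T}$ — but the \emph{saturation} of the transfer system generated by those cover relations that do appear will recover $\mathsf{T}$.

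The core argument I would run is: given join-irreducible $\mathsf{T} \in \Sat(\Sub(G))$, consider the set $\mathcal{C} = \{K \to H \in \mathsf{T} : H \text{ covers } K\} \subseteq \Cov(\Sub(G))$ of cover relations actually present in $\mathsf{T}$, and let $\mathsf{T}' \in \Sat(\Sub(G))$ be the saturated transfer system generated by $\mathcal{C}$. Clearly $\mathsf{T}' \leq \mathsf{T}$. For the reverse inclusion I would show that every relation $A \to B \in \mathsf{T}$ already lies in $\mathsf{T}'$: take a maximal chain from $A$ to $B$ in $\Sub(G)$; I want to argue that each cover edge along such a chain (or at least along \emph{some} cleverly chosen chain, e.g. one passing through intermediate subgroups $A \to A \vee C$ for relevant $C$) lies in $\mathsf{T}$, using the transfer-system axiom together with two-out-of-three. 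Once all cover edges of a chain from $A$ to $B$ are in $\mathsf{T}$, they are in $\mathcal{C} \subseteq \mathsf{T}'$, so by transitivity $A \to B \in \mathsf{T}'$; hence $\mathsf{T}' = \mathsf{T}$. Now $\mathsf{T} = \mathsf{T}' = \bigvee_{(K \to H) \in \mathcal{C}} \langle K \to H \rangle_{\Sat}$, a join (in $\Sat$) of saturated transfer systems generated by single cover relations; join-irreducibility of $\mathsf{T}$ forces $\mathsf{T} = \langle K \to H \rangle_{\Sat}$ for one such cover relation.

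The main obstacle is the middle step: showing that every relation $A \to B$ in a saturated transfer system refines to a chain of cover relations \emph{all of which lie in $\mathsf{T}$}. The transfer axiom gives, for $A \to B$ and any $A \leq C \leq B$, that $A = A \wedge C \to C$ — wait, more carefully it gives $A \wedge C \to C$ for $C \leq B$, which when $A \leq C$ yields $A \to C$; dually two-out-of-three then forces $C \to B$. So in fact for \emph{any} intermediate subgroup $C$ with $A \leq C \leq B$, both $A \to C$ and $C \to B$ lie in $\mathsf{T}$. Iterating down a maximal chain, every cover edge in any maximal chain from $A$ to $B$ lies in $\mathsf{T}$; this is exactly what is needed, and it makes the obstacle more routine than it first appears — the real content is just assembling the transfer axiom with two-out-of-three correctly. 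I would double-check the edge case where $A = B$ (trivial) and confirm that generated saturated transfer systems behave well under the join in $\Sat$ (which follows since $\Sat(\Sub(G))$ is a complete lattice whose joins dominate the corresponding $\Tr$-joins). The conclusion then follows cleanly from join-irreducibility.
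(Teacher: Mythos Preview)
Your proposal is correct and follows essentially the same approach as the paper: express $\mathsf{T}$ as the join in $\Sat(\Sub(G))$ of the saturated transfer systems $\floor{K \to H}_{\mathrm{sat}}$ over the cover relations $K \to H$ lying in $\mathsf{T}$, then invoke join-irreducibility. The only difference is that the paper cites Rubin's result that every transfer system is generated (already as a transfer system) by its cover relations, whereas you prove the needed fact directly using restriction plus two-out-of-three---which is a perfectly clean substitute in the saturated setting.
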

\begin{proof}
Let $\mathsf{T} \in J(\Sat(\Sub(G)))$. Rubin \cite[Proposition 5.8]{rubin} shows that $\mathsf{T}$ is generated (as a transfer system) by $\mathsf{T} \cap \Cov(\Sub(G))$. As a consequence, $\mathsf{T}$ is generated as a \emph{saturated} transfer system by $\mathsf{T} \cap \Cov(\Sub(G))$. Thus,
\[\mathsf{T} = \bigvee_{K \to H \in \mathsf{T} \cap \Cov(\Sub(G))} \floor{K \to H}_{\mathrm{sat}},\]
where $\floor{K \to H}_{\mathrm{sat}}$ denotes the saturated transfer system generated by $K \to H$. But $\mathsf{T}$ is join-irreducible!
\end{proof}

The converse surprisingly does not hold. That is, there can be covering relations which do \emph{not} generate join-irreducible saturated transfer systems.

\begin{example}
    Let $G=A_5$. Then $\abs{\Cov(\Sub(G))/\!/G} = 13$. The first thing to note is that the operation of taking a covering relation and forming its saturated closure is not injective. Indeed, the saturated closures of $A_4 \to A_5$, $D_5 \to A_5$, and $S_3 \to A_5$ are all equal (they are all the complete transfer system). This is the only failure of injectivity for $G = A_5$, and it follows that there are exactly 11 saturated transfer systems which are saturated closures of a single cover relation. However, only 8 out of these 11 saturated transfer systems are join-irreducible elements of $\Sat(\Sub(G))$.
\end{example}

\appendix

\section{Notes on Computer-Assisted Computations}\label{app:A}

The ordering of the rows and columns in a formal context has no effect on the corresponding finite lattice. However, this ordering can have an effect on the runtime of certain algorithms which enumerate formal concepts. The computations in this paper were performed with PCbO \cite{pcbo}, for which it seems to be optimal to sort the rows of the context matrix in decreasing order, where each row is viewed as a binary number with its most-significant digit in the last column.\footnote{There are many software packages available for computations in formal concept analysis; we chose to use PCbO for its built-in parallelism.}

PCbO expects as input a file in ``FIMI'' format (these files are conventionally given the file extension \texttt{.dat}). A \texttt{.dat} file has the following format:
\begin{itemize}
    \item Each line of the file is a space-separated list of natural numbers, representing an object in the formal context.
    \item Each natural number in the space-separated list represents an attribute the object has, with \texttt{0} being the first attribute.
\end{itemize}

See \Cref{fig:example dat} for an example.
\begin{figure}[h]
    \begin{minipage}{0.45\textwidth}
    \[\begin{array}{c|c|c|c|c}
           & y_1 & y_2 & y_3 & y_4 \\ \hline
       x_1 & 1   & 0   & 0   & 0   \\ \hline
       x_2 & 0   & 0   & 0   & 1   \\ \hline
       x_3 & 1   & 1   & 1   & 0
    \end{array}\]
    \end{minipage}
    \begin{minipage}{0.2\textwidth}
    \vspace{1em}
    \begin{lstlisting}
0
3
0 1 2
    \end{lstlisting}
    \end{minipage}
    \caption{A formal context and its representation as a \texttt{.dat} file.}
    \label{fig:example dat}
\end{figure}

The published version of the PCbO software prints each computed formal concept to \texttt{stdout}, which slows runtime significantly. Moreover, each thread records the number of formal concepts produced in a C integer, which on most systems has no more than $4$ bytes, and thus may cause overflow errors when more than $2^{31}-1$ concepts are computed. The authors have produced a modified version of PCbO which addresses these limitations, as well as GAP code which generates \texttt{.dat} files encoding the reduced formal context of the lattice of transfer systems on a finite group, available at \url{https://github.com/diracdeltafunk/FCA-Homotopical-Combinatorics}.

The first author has previously published \href{https://github.com/bifibrant/ninfty}{ninfty} \cite{balchin2025ninftysoftwarepackagehomotopical} which allows for more sophisticated computations in homotopical combinatorics. For the basic task of enumerating transfer systems, PCbO is much faster, and uses $O(1)$ space. See \Cref{table:runtimes} for run times in sample computations. This allows for many enumeration computations which previously required the use of high-performance computing clusters to be performed on typical laptop computers.

\begin{table}[h]
    \centering
    \begin{tabular}{r | lll | lll}
        & \multicolumn{3}{c|}{Code from \cite{balchin2025ninftysoftwarepackagehomotopical}} & \multicolumn{3}{c}{Code from \cite{FCA-Homotopical-Combinatorics_2025}} \\[0.3em]
        Lattice & $T_1$ & $T_2$ & $T_1+T_2$ & $T_1$ & $T_2$ & $T_1+T_2$ \\[0.3em] \hline
        &&&&&& \\[-0.9em]
        $\Tr(\Sub(C_{2^{10}}))$ & 3.593s & 4.567s & $\approx$8s & 1.092s & 0.007s & $\approx$1s \\[0.4em]
        $\Tr(\Sub(C_2^3))$ & 2.653s & 638.216s & $\approx$11min & 0.801s & 0.267s & $\approx$1s \\[0.4em]
        $\Tr(\Sub(C_{2^3 \cdot 3^3}))$ & 4.005s & 1132.035s & $\approx$19min & 0.872s & 1.138s & $\approx$2s \\[0.4em]
        $\Tr(\Sub(S_5))$ & 1086.861s & $\infty$ & $\infty$ & 1.275s & 11.478s & $\approx$13s \\[0.4em]
        $\Tr(\Sub(A_6))$ & $>$24 hours & $\infty$ & $\infty$ & 2.471s & 8063.045s & $\approx$2.2 hours \\[0.4em]
    \end{tabular}
    \vspace{1em}
    \caption{Run times for sample computations. $T_1$ denotes the time to produce the necessary input files for the enumeration algorithm (a \texttt{.h} file produced by a sage script or a \texttt{.dat} file propduced by a GAP script). $T_2$ denotes the time to enumerate the elements of the lattice (using either ninfty or PCbO).
    All computations were performed on a 2024 Apple M3 MacBook Air with 16GB of RAM. $\infty$ indicates that the computation is infeasible (the system has insufficient memory, \emph{and} with more memory the computation would take more than 1 month to complete). PCbO was run with 8 threads.}
    \label{table:runtimes}
\end{table}

\bibliography{bib}\bibliographystyle{alpha}

\end{document}